\definecolor{rosewood}{rgb}{0.4, 0.0, 0.04}
\definecolor{rufous}{rgb}{0.66, 0.11, 0.03}
\newtheorem{theorem}{Theorem}[subsection]
\newtheorem*{question*}{Question}
\newtheorem*{thm*}{Theorem}
\newtheorem{defi}{Definition}[subsection]
\newtheorem*{claim*}{Claim}
\newtheorem{lemma}{Lemma}[subsection]
\newtheorem{prop}{Proposition}[subsection]
\newtheorem{sled}{Corollary}[subsection]
\theoremstyle{definition}
\newtheorem{example}{Example}[subsection]
\newtheorem{remark}{Remark}[subsection]
\newtheorem*{notation*}{Notations}
\newtheorem*{example*}{Example}
\newcommand{\Gr}{\operatorname{\Gamma}}
\newcommand{\parti}{\operatorname{\Pi_{\mathsf{gr}}}}
\newcommand{\Path}{\operatorname{\mathsf{P}}}
\newcommand{\K}{\operatorname{\mathsf{K}}}
\newcommand{\Cyc}{\operatorname{\mathsf{C}}}
\DeclareMathOperator{\Leav}{Leav}
\newcommand{\Aut}{\operatorname{\mathsf{Aut}}}
\DeclareMathOperator{\Id}{Id}
\DeclareMathOperator{\Hom}{Hom}
\newcommand{\GrCol}{\operatorname{\mathsf{GrCol}}}
\newcommand{\M}{\operatorname{\mathcal{M}}}
\newcommand{\Orb}{\operatorname{\mathcal{O}}}
\newcommand{\V}{\operatorname{\mathcal{V}}}
\newcommand{\B}{\operatorname{\mathcal{B}}}
\newcommand{\Q}{\operatorname{\mathcal{Q}}}
\newcommand{\E}{\operatorname{\mathcal{E}}}
\newcommand{\R}{\operatorname{\mathcal{R}}}
\newcommand{\Ho}{\operatorname{\mathcal{H}}}
\newcommand{\T}{\operatorname{\mathbb{T}}}
\newcommand{\Pop}{\operatorname{\mathcal{P}}}
\newcommand{\C}{\operatorname{\mathcal{C}}}
\newcommand{\N}{\operatorname{\mathcal{N}}}
\newcommand{\cokoszul}{\operatorname{\text{!`}}}
\newcommand{\Com}{\operatorname{\mathsf{gcCom}}}
\newcommand{\Susp}{\operatorname{\mathcal{S}}}
\newcommand{\Ass}{\operatorname{\mathsf{gcAss}}}
\newcommand{\Lie}{\operatorname{\mathsf{gcLie}}}
\newcommand{\Tree}{\operatorname{\mathsf{Tree}}}
\newcommand{\RST}{\operatorname{\mathsf{RST}}}
\newcommand{\Ham}{\operatorname{\mathsf{Ham}}}
\newcommand{\BiPlan}{\operatorname{\mathsf{BiPlan}}}
\newcommand{\Plan}{\operatorname{\mathsf{PlanEq}}}
\newcommand{\CycHam}{\operatorname{\mathsf{CycHam}}}
\newcommand{\CycEq}{\operatorname{\mathsf{CycEq}}}
\newcommand{\Perm}{\operatorname{\mathsf{Perm}}}
\newcommand{\CycPerm}{\operatorname{\mathsf{CycPerm}}}
\newcommand{\HP}{\operatorname{\mathrm{HP}}}
\newcommand{\HC}{\operatorname{\mathrm{HC}}}
\newcommand{\PE}{\operatorname{\mathrm{PE}}}
\newcommand{\CE}{\operatorname{\mathrm{CE}}}
\author{Denis Lyskov}
\address{Laboratory of Algebraic Geometry, National Research University Higher School of Economics, 6 Usacheva street, Moscow 119048, Russia}
\title{Operadic structure on Hamiltonian paths and cycles}
\email{ddl2001@yandex.ru}
\date{}
\begin{document}
\begin{abstract}
We study Hamiltonian paths and cycles in undirected graphs from an operadic viewpoint. We show that the graphical collection $\Ham$ encoding directed Hamiltonian paths in connected graphs admits an operad-like structure, called a contractad. Similarly, we construct the graphical collection of Hamiltonian cycles $\CycHam$ that forms a right module over the contractad $\Ham$. We use the machinery of contractad generating series for counting Hamiltonian paths/cycles for particular types of graphs.
\end{abstract}
\keywords{Hamiltonian paths/cycles, operads, Koszul duality, patterns avoiding permutations, generating functions}
\maketitle
\setcounter{tocdepth}{1}
\tableofcontents
\section{Introduction}
The purpose of this paper is to introduce graph theory researchers to the theory of contractads developed by the author in~\cite{lyskov2023contractads} and to show how to use this theory to produce various numerical applications using the example
of Hamiltonian paths and cycles.
 
A Hamiltonian path/cycle is a path/cycle in a graph that meets each vertex exactly once. The finding or counting of Hamiltonian paths and cycles is a long-standing problem in graph theory. In this paper, we introduce a new approach to studying Hamiltonian paths/cycles based on operadic methods. Informally, the idea is to consider all possible Hamiltonian paths in all possible graphs as one global object, called a contractad, with an additional algebraic structure induced from substitution of paths into paths. In a similar way, we construct an algebraic structure encoding all Hamiltonian cycles that forms a right module over the contractad of Hamiltonian paths.

Let us define the contractad of Hamiltonian paths, for details see Section~\ref{sec::contractads}. For a connected graph $\Gr$, let $\Ham(\Gr)$ be a vector space generated by directed Hamiltonian paths. For each connected graph $\Gr$ and collection of vertices $G$ inducing a connected subgraph $\Gr|_G$, we have a substitution map
\[
\circ^{\Gr}_G\colon \Ham(\Gr/G)\otimes \Ham(\Gr|_G) \rightarrow \Ham(\Gr),
\] 
where $\Gr/G$ is the graph obtained from $\Gr$ by contracting $G$ to a single vertex. For Hamiltonian paths $\Path=(v_1,\cdots,v_{k-1},\{G\},v_{k+1},\cdots,v_n)$ and $\Path'=(w_1,w_2,\cdots, w_m)$ in the contracted $\Gr/G$ and induced $\Gr|_G$ graphs, we put $\Path\circ^{\Gr}_G \Path'=(v_1,\cdots,v_{k-1},w_1,w_2,\cdots, w_m,v_{k+1},\cdots,v_n)$, if $v_{k-1}$ is adjacent to $w_1$ and $w_m$ to $v_{k+1}$, and zero otherwise. The resulting graphical collection of vector spaces $\Ham:=\{\Ham(\Gr)\}_{\Gr\in \mathsf{CGr}}$ with substitution maps forms an algebraic structure, called a \textit{contractad}, introduced by the author in~\cite{lyskov2023contractads}. This structure is a natural generalization of the notion of an operad, whose set of operations is indexed by connected graphs and whose composition rules are numbered by contractions of connected subgraphs. 

For a connected graph $\Gr$, we let $\CycHam(\Gr)$ be the vector spaces generated by directed Hamiltonian cycles. Similarly, the operation of substitution of paths into cycles produces the substitution maps
\[
\circ^{\Gr}_G\colon \CycHam(\Gr/G)\otimes \Ham(\Gr|_G) \rightarrow \CycHam(\Gr).
\] In terms of contractads, these operations endow the graphical collection $\CycHam$ of Hamiltonian cycles with a structure of the right $\Ham$-module, for details see Section~\ref{sec::modules}.

While studying these two objects, we give an introduction to the theory of contractads and right modules over them. Also, we briefly discuss other contractad examples arising in graph theory, such as the contractad of acyclic graph directions $\Ass$, the contractad of rooted spanning trees $\RST$, and the contractad of planar equivalent binary trees $\Plan$.

The central results of this paper are concerning Koszul properties for $\Ham$ and $\CycHam$. Recall that, for a free symmetric algebra $\mathbb{S}(V)$, there is a so-called Koszul resolution $\bigwedge(V^*)\otimes \mathbb{S}(V)\overset{\simeq}{\to}\mathbb{k}$, where $\bigwedge(V^*)$ is an exterior algebra on the dual space. The construction of Koszul complexes was generalised to the case of quadratic associative algebras $A^{\cokoszul}\otimes^{\kappa} A\to \mathbb{k}_{A}$, see~\cite{polishchuk2005quadratic} for details, and then to the case of quadratic operads $\Pop^{\cokoszul}\circ^{\kappa}\Pop\to \mathbb{1}_{\Pop}$, see~\cite{loday2012algebraic} for details. Mysteriously, for most examples of quadratic algebras/operads, these complexes define minimal resolutions of trivial modules. We generalise the construction of Koszul complexes to the case of quadratic right modules over contractads $\M^{\cokoszul}\circ^{\kappa}\Pop\to \M$, for details see Section~\ref{sec::koszul}. We say that a module $\M$ is Koszul if the corresponding Koszul complex is a minimal resolution for $\M$. In particular, we say that a contractad $\Pop$ is Koszul if its trivial module $\mathbb{1}_{\Pop}$ is. The main result of this paper is

\begin{thm*}
The contractad of Hamiltonian path $\Ham$ and its right module of Hamiltonian cycles $\CycHam$ are Koszul.
\end{thm*}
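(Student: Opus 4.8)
The plan is to deduce Koszulness from the existence of quadratic Gröbner (equivalently, PBW) bases, using the contractad and right-module analogues of the classical ``Koszul $=$ quadratic $+$ Gröbner basis'' criterion: the contractad Diamond Lemma of~\cite{lyskov2023contractads} together with its module refinement from the Koszulness section. The first task is to record explicit quadratic presentations. For $\Ham$ the generators are the two directed edges spanning $\Ham(\K_2)$, forming the regular representation of $\Aut(\K_2)=S_2$. That they generate the whole contractad is an easy induction: any directed Hamiltonian path $(v_1,\dots,v_n)$ in $\Gr$ is the composition $\circ^{\Gr}_{\{v_1,v_2\}}$ of the directed edge $(v_1,v_2)\in\Ham(\Gr|_{\{v_1,v_2\}})$ into the shorter path $(\{v_1v_2\},v_3,\dots,v_n)\in\Ham(\Gr/\{v_1,v_2\})$, and $\Gr/\{v_1,v_2\}$ is again connected.

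The quadratic relations are read off from the connected three-vertex graphs, the path $\Path_3$ and the triangle $\K_3$. They come in two flavours: \emph{graphical-associativity} relations, equating the two splicings of three consecutive adjacent vertices obtained by contracting either of two edges, and \emph{vanishing} relations, declaring a composition zero whenever the spliced endpoints fail to be adjacent (e.g.\ $(\{ab\},c)\circ(b,a)=0$ in the path $a$--$b$--$c$, since $a\not\sim c$). I would then fix a path-lexicographic admissible order on the graph-tree monomials of the free contractad so that the surviving normal monomials are exactly the left-combs recording a path built vertex-by-vertex from one endpoint; these normal forms should biject with directed Hamiltonian paths, which simultaneously confirms completeness of the presentation and pins down the candidate PBW basis.

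It then remains to prove confluence, i.e.\ that the stated relations already constitute a Gröbner basis. By the contractad Diamond Lemma it suffices to resolve all critical pairs one level above the relations, that is, on the six connected four-vertex graphs $\Path_4$, $\St_{1,3}$, $\Cyc_4$, the paw, the diamond $\K_4-e$, and $\K_4$; for each decorated overlap one checks that the two rewritings reach a common normal form. This finite but laborious case analysis is the main obstacle, since the associativity rewrites and the many monomial (vanishing) relations must be tracked simultaneously and shown not to create fresh obstructions. Granting confluence, $\Ham$ is Koszul.

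For the module $\CycHam$ the argument runs in parallel. As simple graphs on two vertices carry no Hamiltonian cycle, the lowest nonzero component is $\CycHam(\K_3)$, spanned by the two directed triangles; one shows as above that these triangle generators generate $\CycHam$ as a right $\Ham$-module, since any directed Hamiltonian cycle is recovered from a shorter one by blowing up a consecutive adjacent pair into an edge through $\circ^{\Gr}_{\{v_1,v_2\}}$. The module relations then live on the four-vertex graphs and are again of associativity/vanishing type, so the module is quadratic; applying the relative (module) Gröbner-basis criterion and checking confluence of the module critical pairs at arity four yields that $\CycHam$ is a Koszul $\Ham$-module. As an independent consistency check one verifies that the Euler characteristics of the Koszul complexes vanish by matching the contractad/module generating series of $\Ham$ and $\CycHam$ against those of their quadratic duals; conceptually, one expects the bar homology to be identifiable with the top (co)homology of intervals in the poset of connected partitions $\parti$, so that Koszulness reflects a Cohen--Macaulay/shellability property of the associated order complexes.
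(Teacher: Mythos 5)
Your treatment of $\Ham$ is a strategy rather than a proof: everything hinges on confluence of the critical pairs on the six connected four-vertex graphs, which you explicitly defer (``granting confluence, $\Ham$ is Koszul''). The quadratic-Gr\"obner-basis route is legitimate for contractads --- the paper's own remark points to \cite{lyskov2023contractads} for exactly this criterion --- but the laborious check you postpone can in fact be avoided: once an admissible order making the rewriting terminate is fixed, the normal forms span the quotient contractad; your bijection between normal forms (left combs with consecutive adjacencies) and directed Hamiltonian paths, together with the surjection of the quotient onto $\Ham$, whose dimension is the number of such paths, sandwiches the dimensions and forces the normal forms to be a basis, which for a terminating system is equivalent to confluence. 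This dimension-count argument is essentially how the paper proves its presentation result, Theorem~\ref{thm::hampres}. So this half is completable, but as written it stops short of its main step.

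The module half has two genuine defects. First, the ``module refinement'' of the contractad Diamond Lemma that you invoke does not exist: the present paper is the first place where Koszulness of right modules over contractads is even defined, and it is established there by computing the homology of the Koszul complex, not by rewriting theory, so you would have to build that module Gr\"obner machinery from scratch. Second, your generators are wrong for the object in the statement: by the paper's conventions $\CycHam(\Path_1)$ and $\CycHam(\Path_2)$ are nonzero (the loop and the $2$-cycle), and $\CycHam$ is generated as a right $\Ham$-module by the loop $\alpha\in\CycHam(\Path_1)$ subject to the single quadratic relation $\alpha\circ^{\Path_2}_{\{1,2\}}\nu=\alpha\circ^{\Path_2}_{\{1,2\}}\nu^{(12)}$ (Theorem~\ref{thm::cyclespres}), with Koszul dual module $\CycEq$. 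Your triangle-generated module is the truncation of $\CycHam$ to graphs on at least three vertices --- a different module, with generators at $\K_3$ and relations one weight higher, to which the paper's duality $\CycHam^{!}\cong\CycEq$ does not apply. For comparison, the paper's actual proof is uniform for both objects and avoids rewriting entirely: the Koszul complexes split as direct sums over $\sigma\in\Plan(\Gr)$ (resp.\ cyclic tuples $\tau\in\CycEq(\Gr)$), and each summand is identified with the reduced cochain complex of the full simplex $\Delta_{\mathsf{E}(\sigma)}$ (hence acyclic), resp.\ of $\partial\Delta_{\mathbb{Z}_n}\cap\Delta_{\mathsf{E}(\tau)}$, which is a sphere precisely when $\tau$ is a Hamiltonian cycle and a contractible simplex otherwise. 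Your closing speculation that Koszulness should reflect a shellability property of an order complex gestures in this direction, but the paper's identification via simplices and spheres is the precise, and much shorter, realization of that idea.
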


At the end of the paper, we use the machinery of generating series for contractads developed in~\cite{khoroshkin2024hilbert} to obtain several applications for counting Hamiltonian paths and cycles for particular types of graphs, for details see Section~\ref{sec::applications}. The main idea is to deduce functional equations on the dimension of components of Koszul contractads by examining Euler characteristics of the corresponding Koszul complexes as it was done for algebras or operads. We list some of these applications in the introduction. For a graph $\Gr$, let $\HP(\Gr)$ and $\HC(\Gr)$ be the number of directed Hamiltonian paths and cycles respectively.
\begin{thm*}[Theorem~\ref{thm::reccurences_paths_and_cycles}]
\begin{itemize}
\item[(i)] For a connected graph $\Gr$, we have
\begin{equation}\label{eq::intro_hp_multi}
\sum_{I\vdash \Gr}(-1)^{|I|-1}|I|!\prod_{G\in I}\HP(\Gr|_G)=(-1)^{|V_{\Gr}|-1}\HP(\overline{\Gr}),
\end{equation} where the sum is taken over all vertex-partitions $I$ of $\Gr$ into connected subgraphs and $\overline{\Gr}$ is a complement graph.
\item[(ii)] For a connected graph $\Gr$ with at least one edge, we have
\begin{equation}\label{eq::intro_hc_multi}
\sum_{I\vdash \Gr} (-1)^{|I|-1}(|I|-1)!\prod_{G\in I} \HP(\Gr|_G)=\HC(\Gr)+(-1)^{|V_{\Gr}|-1}\HC(\overline{\Gr}),
\end{equation}
\end{itemize}
\end{thm*}

Since the class of entire graphs is too "large", it is natural to consider special families of graphs and try to find generating functions for them such that identities~\eqref{eq::intro_hp_multi} and \eqref{eq::intro_hc_multi} transform into functional equations. In this paper, we pay attention to the case of complete multipartite graphs. For a partition $\lambda=(\lambda_1\geq \lambda_2\geq\cdots\geq\lambda_k)$, the complete multipartite graph $\K_{\lambda}$ is the graph consisting of blocks of vertices of sizes $\lambda_1,\lambda_2,\cdots,\lambda_k$, such that two vertices are adjacent if and only if they belong to different blocks. In~\cite{khoroshkin2024hilbert}, the authors introduced a generating function $F_{\mathsf{Y}}(f)$ in the ring of symmetric functions $\Lambda_{\mathbb{Q}}=\underset{\to}{\lim}\mathbb{Q}[x_1,\cdots,x_n]^{\Sigma_n}$ that packages values of a graphic function $f$ on complete multipartite graphs. For example, using these generating functions, the authors obtained the following generating function for chromatic polynomials

\begin{thm*}\cite{khoroshkin2024hilbert}
The generating function of chromatic polynomials for complete multipartite graphs is given  by the formula
\begin{equation}
\sum_{\lambda} \chi_{\K_{\lambda}}(q)\frac{m_{\lambda}}{\lambda!}=(1+\sum_{n\geq 1} \frac{p_n}{n!})^q,
\end{equation} where $m_{\lambda}=\mathsf{Sym}(x^{\lambda})$ are monomial symmetric functions, $\lambda!:=\lambda_1!\cdot\lambda_2!\cdots\lambda_k!$, and $p_n=\sum^{\infty}_{i=1}x_i^n$ is the $n$-th power symmetric function.
\end{thm*}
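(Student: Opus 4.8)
The plan is to prove the identity first for every positive integer $q$ and then to pass to a polynomial identity in $q$. Write $\Phi(q) := \sum_\lambda \chi_{\K_\lambda}(q)\frac{m_\lambda}{\lambda!}$ for the left-hand side and $P := \sum_{n\geq 0}\frac{p_n}{n!}$ (with the convention $p_0 = 1$) for the base of the right-hand side, so the claim becomes $\Phi(q) = P^q$. In each fixed degree the coefficient of $m_\lambda$ in $\Phi(q)$ is literally the polynomial $\chi_{\K_\lambda}(q)$, and the degree-$d$ part of $P^q$ has coefficients polynomial in $q$ as well (expand $P^q = \sum_k \binom{q}{k}(P-1)^k$, where only finitely many $k\le d$ contribute in degree $d$). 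Hence it suffices to establish $\Phi(q) = P^q$ for all $q \in \mathbb{Z}_{\geq 1}$, after which equality of polynomials follows from agreement at infinitely many points.

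The base case $q = 1$ is immediate: $\K_\lambda$ admits a proper $1$-coloring exactly when it has no edges, i.e. when $\lambda = (n)$ is a single block (including the empty partition), and $m_{(n)} = p_n$; thus $\Phi(1) = \sum_{n\geq 0} p_n/n! = P$. For the inductive step I would prove the multiplicativity $\Phi(q_1 + q_2) = \Phi(q_1)\,\Phi(q_2)$, which then yields $\Phi(q) = \Phi(1)^q = P^q$. To get multiplicativity, split a palette of $q_1 + q_2$ colors into a first group of $q_1$ and a second group of $q_2$: a proper coloring of $\K_\lambda$ is the same as a choice of the vertex subset $A$ receiving first-group colors together with proper colorings of the induced subgraphs $\K_\lambda[A]$ and $\K_\lambda[V\setminus A]$, the cross edges being automatically bichromatic. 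This is the chromatic addition formula $\chi_{\K_\lambda}(q_1+q_2) = \sum_{A\subseteq V}\chi_{\K_\lambda[A]}(q_1)\,\chi_{\K_\lambda[V\setminus A]}(q_2)$.

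To turn this set-theoretic identity into the statement $\Phi(q_1+q_2)=\Phi(q_1)\Phi(q_2)$, I would compare coefficients of a fixed monomial $x^\alpha$. Because $m_\lambda$ collects precisely the monomials whose exponent multiset is $\lambda$, the coefficient of $x^\alpha$ in $\Phi(q)$ equals $\chi_{\K_{\lambda(\alpha)}}(q)/\prod_i \alpha_i!$, where $\lambda(\alpha)$ is the partition formed by the nonzero parts of $\alpha$. Since an induced subgraph of a complete multipartite graph is again complete multipartite with block sizes $A\cap(\text{blocks})$, the coefficient of $x^\alpha$ in $\Phi(q_1)\Phi(q_2)$ is a sum over $\beta+\gamma=\alpha$ weighted by $\prod_i\binom{\alpha_i}{\beta_i}=\prod_i \alpha_i!/(\beta_i!\gamma_i!)$, and these binomial factors are exactly the count of how $A$ meets each block. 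Matching this against the addition formula gives the desired coefficientwise equality.

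The main obstacle is this last bookkeeping step: verifying that multiplication of generating series written in the divided-power monomial basis $m_\lambda/\lambda!$ corresponds, coefficient by coefficient, to the subset-splitting convolution of chromatic polynomials, with all factorials reconciled. An alternative route bypasses the induction altogether: expand $P^q$ directly as a product over the $q$ color-slots, note that each slot contributes either $1$ or a single-variable power $x_i^n/n!$, and use the surjection identity $\sum_{n_1+\cdots+n_j=a}\prod_\ell 1/n_\ell! = j!\,S(a,j)/a!$ (with $S$ the Stirling numbers of the second kind). Collecting slots by variable produces $[x^\alpha]P^q = \frac{1}{\lambda(\alpha)!}\sum_{j_1,\dots,j_k}(q)_{j_1+\cdots+j_k}\prod_\ell S\bigl(\lambda(\alpha)_\ell, j_\ell\bigr)$, where $(q)_m = q(q-1)\cdots(q-m+1)$; this is exactly the classical closed form for $\chi_{\K_{\lambda(\alpha)}}(q)$ obtained by partitioning each block into color classes and assigning all classes pairwise distinct colors. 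Either way, matching coefficients for every $q\in\mathbb{Z}_{\geq 1}$ finishes the proof.
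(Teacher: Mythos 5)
Your proof is correct --- I checked both routes, and the factorial bookkeeping you flagged as the main obstacle does close --- but note that the paper itself never proves this statement: it is quoted from~\cite{khoroshkin2024hilbert} as motivation, so the only in-paper point of comparison is the method used for the analogous results that \emph{are} proved here, Theorem~\ref{thm::hp_hc_young_generating}, and that method is genuinely different from yours. The paper derives such formulas inside the contractad formalism: a $*$-product identity of graphic functions (there supplied by Koszulness of $\Ham$ and $\CycHam$) is fed into the composition rule $F_{\mathsf{Y}}(f*g)=F_{\mathsf{Y}}(f)(F_{\mathsf{Y}}(g))$ of~\cite{khoroshkin2024hilbert}, and the closed form is extracted by solving the resulting functional equation; for the chromatic polynomial the corresponding $*$-identity would be $\sum_{I\vdash\Gr}\chi_{\Gr/I}(q)=q^{|V_{\Gr}|}$, obtained by grouping an arbitrary coloring by its maximal monochromatic connected pieces. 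You work entirely outside this machinery: your key lemma is the addition formula $\chi_{G}(q_1+q_2)=\sum_{A\subseteq V}\chi_{G[A]}(q_1)\,\chi_{G[V\setminus A]}(q_2)$, valid for \emph{every} graph because cross edges are automatically bichromatic, combined with closure of complete multipartite graphs under induced subgraphs; dividing by $\prod_i\alpha_i!$ converts the subset counts $\prod_i\binom{\alpha_i}{\beta_i}$ into exactly the coefficient convolution in the divided-power basis $m_{\lambda}/\lambda!$, and polynomiality of each fixed-degree coefficient upgrades agreement at positive integers $q$ to the polynomial identity. Your second, Stirling-number route is also sound: $\sum_{(j_\ell)}(q)_{j_1+\cdots+j_k}\prod_{\ell}S(\lambda_{\ell},j_{\ell})$ is indeed the classical closed form of $\chi_{\K_{\lambda}}(q)$, and the slot expansion of $P^q$ reproduces it. What each approach buys: yours is elementary, self-contained, and makes the binomial (Hopf-type) structure of chromatic polynomials transparent; the paper's route is heavier but uniform --- it is precisely the argument that still functions for Hamiltonian paths and cycles, where no analogue of your addition formula is available and Koszulness is what replaces it.
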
 
In~\cite{klarner1969number}, Klarner computed the generating function in the ring of symmetric functions that counts the numbers of Hamiltonian paths in complete multipartite graphs. We reprove and generalise its result using methods of generating functions.
\begin{thm*}[Theorem~\ref{thm::hp_hc_young_generating}]
\begin{itemize}
    \item[(i)]
The generating function of directed Hamiltonian paths in complete multipartite graphs is given by the formula\footnote{We use convention $\HP(\K_{(0)})=1$ for the empty graph $\K_{(0)}=\varnothing$}
\begin{equation*}
  \sum_{|\lambda|\geq 0} \HP(\K_{\lambda})\frac{m_{\lambda}}{\lambda!}=\frac{1}{1-\sum_{n\geq 1} (-1)^{n-1}p_n}.  
\end{equation*} 
\item[(ii)] The generating function of directed Hamiltonian cycles in complete multipartite graphs is given by the formula\footnote{We use convention $\HC(\K_{(1^2)})=1$ for connected graph on two vertices}
\begin{equation*}
\sum_{l(\lambda)\geq 2} \HC(\K_{\lambda})\frac{m_{\lambda}}{\lambda!}=-\log(1-\sum_{n\geq 1}(-1)^{n-1}p_n)-\sum_{n\geq 1}(-1)^{n-1}\frac{p_n}{n}.  
\end{equation*}
\end{itemize}
\end{thm*}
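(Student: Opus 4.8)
The plan is to derive both generating functions as consequences of the Koszul property for $\Ham$ and $\CycHam$, specialized to the family of complete multipartite graphs. By the Koszulness theorem, the Koszul complexes $\Ham^{\cokoszul}\circ^{\kappa}\Ham\to \mathbb{1}_{\Ham}$ and $\CycHam^{\cokoszul}\circ^{\kappa}\Ham\to \CycHam$ are exact resolutions, so taking Euler characteristics componentwise produces the multiplicative sign-alternating identities~\eqref{eq::intro_hp_multi} and~\eqref{eq::intro_hc_multi} from Corollary~\ref{cor::hp_hc_reccur}. The generating-series machinery of~\cite{khoroshkin2024hilbert} repackages such identities over the whole family $\K_{\lambda}$ into a single equation in the ring $\Lambda_{\mathbb{Q}}$ via the operator $F_{\mathsf{Y}}$. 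The first thing I would do is verify that the complete multipartite graphs satisfy the hypotheses of the corollary: the complement $\overline{\K_{\lambda}}$ is a disjoint union of cliques, hence disconnected (for $l(\lambda)\geq 2$), so in particular it is non-traceable and non-Hamiltonian, and the identities apply.

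Next I would translate the vertex-partition identity~\eqref{eq::intro_hp_multi} into the ring of symmetric functions. A vertex partition $I\vdash \K_{(1^n)\cup\lambda}$ into connected subgraphs corresponds to grouping the blocks, and each connected induced subgraph $\K_{\lambda}|_G$ is again complete multipartite; the plethystic bookkeeping of~\cite{khoroshkin2024hilbert} converts the sum $\sum_{I}(-1)^{|I|-1}|I|!\prod_{G\in I}\HP$ into the statement that the generating series $H(p):=\sum_{|\lambda|\geq 0}\HP(\K_{(1^n)\cup\lambda})\frac{m_{\lambda}}{\lambda!}$ is the multiplicative inverse of a series built from the single-block values $\HP(\K_{(1^m)})$. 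The crucial input is that a complete multipartite graph on a single block of $m$ vertices, namely the discrete graph, is connected only when $m=1$, so the "indecomposable" contributions localize to the power sums; this is where the series $1-\sum_{n\geq 1}(-1)^{n-1}p_n$ comes from, the signs recording the $(-1)^{|I|-1}|I|!$ weights through the combinatorics of ordered set partitions. I would set $H(p)=\bigl(1-\sum_{n\geq 1}(-1)^{n-1}p_n\bigr)^{-1}$ and verify the inversion identity term by term, which is equivalent to~\eqref{eq::intro_hp_multi}.

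For part~(ii), the right-module Koszul resolution for $\CycHam$ yields the cyclic identity~\eqref{eq::intro_hc_multi}, which differs from the path case by the weight $(|I|-1)!$ instead of $|I|!$; this is precisely the combinatorial signature of a logarithm rather than an inverse. Applying $F_{\mathsf{Y}}$ and using part~(i) to substitute the already-computed path series, the $(|I|-1)!$ weighting assembles into $-\log\bigl(1-\sum_{n\geq 1}(-1)^{n-1}p_n\bigr)$; the correction term $-\sum_{n\geq 1}(-1)^{n-1}\frac{p_n}{n}$ subtracts off the degenerate single-block contributions that do not correspond to genuine Hamiltonian cycles (recall the convention $\HC(\K_{(1^2)})=1$ and that cycles require $l(\lambda)\geq 2$). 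Concretely I would differentiate or take the logarithmic derivative of the path-series identity to match coefficients, using that the Euler-characteristic relation for the module is the derivative-level analogue of the relation for the contractad.

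The main obstacle I anticipate is the precise sign and factorial bookkeeping in the passage from the finite identities~\eqref{eq::intro_hp_multi}--\eqref{eq::intro_hc_multi} to symmetric functions: one must check that summing $\HP(\K_{\lambda|_G})$ over all ways of partitioning the blocks, weighted by $(-1)^{|I|-1}|I|!$, reproduces exactly the plethystic composition encoded by $F_{\mathsf{Y}}$, including the role of the distinguished singleton blocks $(1^n)$ that anchor the normalization $\HP(\K_{(0)})=1$. Establishing that the single-block discrete graphs contribute only the alternating power sums $(-1)^{n-1}p_n$ — and correctly accounting for the factor $\frac{1}{n}$ arising from cyclic symmetry in the cycle case — is the delicate combinatorial heart of the argument; once that dictionary is fixed, both formulas follow by a formal inversion and a formal logarithm in $\Lambda_{\mathbb{Q}}$.
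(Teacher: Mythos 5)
Your proposal is correct and rests on the same pillars as the paper's proof --- Koszulness of $\Ham$ and $\CycHam$, passage to Euler characteristics, and Young generating functions --- but the endgame is organized differently, so the comparison is worth making. The paper does not argue through Corollary~\ref{cor::hp_hc_reccur}: it keeps the unspecialized identities $\omega(\PE)*\HP=\varepsilon$ and $\HC=\omega(\CE)*\HP$ from Theorem~\ref{thm::reccurences_paths_and_cycles}, computes $F_{\mathsf{Y}}(\PE)$ and $F_{\mathsf{Y}}(\CE)$ explicitly from $\PE(\K_{\lambda})=|\lambda|!$ and $\CE(\K_{\lambda})=(|\lambda|-1)!$, and then feeds everything into the composition theorem $F_{\mathsf{Y}}(f*g)=F_{\mathsf{Y}}(f)(F_{\mathsf{Y}}(g))$ of~\cite{khoroshkin2024hilbert}, solving the resulting compositional equation $F_{\mathsf{Y}}(\omega(\PE))(F_{\mathsf{Y}}(\HP)(z))=z$. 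You specialize first and then assemble the factorial-weighted identities of Corollary~\ref{cor::hp_hc_reccur} directly into a multiplicative inverse and a logarithm in $\Lambda_{\mathbb{Q}}$: writing $H_+:=\sum_{|\lambda|\geq 1}\HP(\K_{\lambda})m_{\lambda}/\lambda!$, the weights $(-1)^{|I|-1}|I|!$ enumerate ordered set partitions, so identity~(i) of the corollary says that every coefficient of $\sum_{k\geq 1}(-1)^{k-1}H_+^{k}$ with $l(\lambda)\geq 2$ vanishes, while the single-block (discrete) graphs, whose only tubes are singletons, contribute $\sum_{n\geq 1}(-1)^{n-1}p_n$; solving gives~(i), and the $(|I|-1)!$ weights likewise assemble into $\log(1+H_+)$ and give~(ii) after subtracting the single-block terms $\sum_{n\geq 1}(-1)^{n-1}p_n/n$. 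This is legitimate, and the step you rightly flag as delicate is exactly what makes it so: multiplication in the basis $m_{\lambda}/\lambda!$ sums over splittings of the vertex set into arbitrary unions of sub-blocks, not only tubes, and this matches the tube-partition sums precisely because $\HP$ vanishes on disconnected induced subgraphs. As for what each route buys: yours is more elementary, avoiding the composition theorem and the auxiliary variable $z$ entirely; the paper's is more uniform and yields for free the stronger form of Theorem~\ref{thm::hp_hc_young_generating}, in which $z$ tracks the families $\K_{(1^n)\cup\lambda}$, which your $z=0$ computation does not immediately recover. Two cosmetic slips: the single-block graphs should be written $\K_{(m)}$, not $\K_{(1^m)}$ (the latter are complete graphs), and the suggestion to differentiate~(i) to obtain~(ii) is unnecessary --- the logarithm falls straight out of the $(|I|-1)!$ weights.
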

\noindent We expect that it is possible to find similar generating functions for various series of graphs that are closed under contractions and induced subgraphs.  

In~\cite{dotsenko2010anick}, the authors used operadic methods in the study of consecutive patterns avoiding permutations. In Section~\ref{sec::separable_permutations}, we find new applications of operadic-like structures in this area. Firstly, we reprove and generalise the generating series resolving Hertzsprung's problem~\cite[p.~737]{flajolet2009analytic}. Secondly, we explore the relation between separable permutations and the Koszul dual contractad $\Plan$ to the Hamiltonian contractad. Specifically, we reprove that separable permutations are those that have separating trees~\cite{bose1998pattern} and their numbers are given by little Shr\"oder numbers~\cite{shapiro1991bootstrap}. 

\subsection*{Organisation}  In Section~\ref{sec::contractads}, we recall the basic notions of contractads, give examples of contractads arising in graph theory and describe their presentations.  In Section~\ref{sec::modules}, we define right modules over contractads and consider several examples of modules. In Section~\ref{sec::koszul}, we develop a Koszul theory for right modules over contractads and verify the Koszul property for $\Ham$ and $\CycHam$. In Section~\ref{sec::applications}, we deduce several applications for counting Hamiltonian paths/cycles and consecutive patterns avoiding permutations. 

\subsection*{Acknowledgements} I am grateful to my advisor Anton Khoroshkin for his guidance and inspiration in the process of writing. I would like to thank Vladimir Dotsenko and Dmitri Piontkovski for useful discussions at various stages of the preparation of this paper and comments on its draft. 
\subsection*{Funding} This paper was supported by the grant RSF 24-21-00341 of Russian Science Foundation.

\section{Contractads and Hamiltonian paths}\label{sec::contractads}

In this section, we give a quick introduction to the theory of contractads introduced by the author in~\cite{lyskov2023contractads}. Informally a contractad is a graphical generalisation of operads, whose set of operations are indexed by connected graphs instead of finite sets and composition rules are numbered by contractions of connected subgraphs, that explains the nature of terminology.

Also, we describe several examples of contractads arising in graph theory. In particular, we describe the contractad $\Ham$ encoding directed Hamiltonian paths in graphs and show that this contractad admits a nice quadratic presentation.

\subsection{Graphs and contractads} In this subsection, we briefly recall several definitions of contractads. For more details, see~\cite[Sec 1]{lyskov2023contractads}.

Let us call a \textit{graph} to be a finite simple undirected  $\Gr=(V_{\Gr},E_{\Gr})$, where $V_{\Gr}$ represents a set of vertices, and $E_{\Gr}$ represents a set of edges. We shall use the following notations for particular types of graphs:
\begin{itemize}
\label{typesofgraphs}
\item the path graph $\Path_n$ on the vertex set $[n]=\{1,\cdots, n\}$ with edges $\{(i,i+1)| 1\leq i \leq n-1 \}$,
\item the cycle graph $\Cyc_n$ on the vertex set $[n]$ with edges $\{(i,i+1)| 1 \leq i \leq n-1\}\cup \{(n,1)\}$,
     \item the complete graph $\K_n$ on the vertex set $\{1,\cdots, n\}$ and the edges $\{(i,j)|i\neq j\}$,
    
     \item For a partition $\lambda=(\lambda_1\geq \lambda_2\geq\cdots\geq\lambda_k)$, we consider the complete multipartite graph $\K_{\lambda}$.  This graph consists of blocks of vertices of sizes $\lambda_1,\lambda_2,\cdots,\lambda_k$, such that two vertices are adjacent if and only if they belong to different blocks.
     \begin{figure}[ht]
\[
\vcenter{\hbox{\begin{tikzpicture}[scale=0.6]
    \fill (0,0) circle (2pt);
    \fill (0,1.5) circle (2pt);
    \fill (1.5,0) circle (2pt);
    \fill (1.5,1.5) circle (2pt);
    \node at (0.75,-0.6) {$\K_{(4)}$};
    \end{tikzpicture}}}
    \quad\quad
\vcenter{\hbox{\begin{tikzpicture}[scale=0.6]
    \fill (0,0) circle (2pt);
    \fill (-1,1.5) circle (2pt);
    \fill (0,1.5) circle (2pt);
    \fill (1,1.5) circle (2pt);
    \draw (0,0)--(-1,1.5);
    \draw (0,0)--(0,1.5);
    \draw (0,0)--(1,1.5);
    \node at (0,-0.6) {$\K_{(3,1)}$};
    \end{tikzpicture}}}
    \quad
\vcenter{\hbox{\begin{tikzpicture}[scale=0.6]
    \fill (0,0) circle (2pt);
    \fill (0,1.5) circle (2pt);
    \fill (1.5,0) circle (2pt);
    \fill (1.5,1.5) circle (2pt);
    \draw (0,0)--(1.5,0)--(1.5,1.5)--(0,1.5)-- cycle;
    \node at (0.75,-0.6) {$\K_{(2^2)}\cong \Cyc_4$};
    \end{tikzpicture}}}
    \quad
\vcenter{\hbox{\begin{tikzpicture}[scale=0.6]
    \fill (0,0) circle (2pt);
    \fill (0,1.5) circle (2pt);
    \fill (1.5,0) circle (2pt);
    \fill (1.5,1.5) circle (2pt);
    \draw (0,0)--(1.5,0)--(1.5,1.5)--(0,1.5)-- cycle;
    \draw (0,0)--(1.5,1.5);
    \node at (0.75,-0.6) {$\K_{(2,1^2)}$};
    \end{tikzpicture}}}
    \quad
\vcenter{\hbox{\begin{tikzpicture}[scale=0.6]
    \fill (0,0) circle (2pt);
    \fill (0,1.5) circle (2pt);
    \fill (1.5,0) circle (2pt);
    \fill (1.5,1.5) circle (2pt);
    \draw (0,0)--(1.5,0)--(1.5,1.5)--(0,1.5)-- cycle;
    \draw (0,0)--(1.5,1.5);
    \draw (1.5,0)--(0,1.5);
    \node at (0.75,-0.6) {$\K_{(1^4)}\cong \K_4$};
    \end{tikzpicture}}}
\]
\caption{List of complete multipartite graphs on 4 vertices.}
\end{figure}
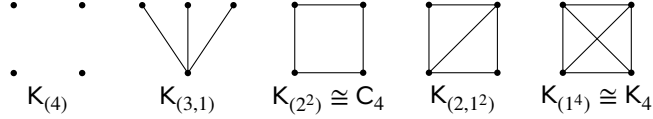
\end{itemize}
Consider the \textit{groupoid of connected graphs} $\mathsf{CGr}$ whose objects are non-empty connected simple graphs and whose morphisms are isomorphisms of graphs. Let $\mathsf{Vect}$ be the category of vector spaces over a ground field $\mathsf{k}$.
\begin{defi}
A graphical collection is a contravariant functor
\[\Orb\colon \mathsf{CGr}^{\mathrm{op}}\rightarrow \mathsf{Vect}.\] All graphical collections with natural transformations form a category $\GrCol$.
\end{defi}
Explicitly, a graphical collection $\Orb$ is a collection of vector spaces $\{\Orb(\Gr)\}$ indexed by connected graphs, and, for each isomorphism of graphs $\phi\colon \Gr\overset{\cong}{\to} \Gr'$, there is an isomorphism of vector spaces $\Orb(\phi)\colon \Orb(\Gr')\to \Orb(\Gr)$. In particular, for each component, there  is an action of graph-automorphisms $\Aut(\Gr)$ on $\Orb(\Gr)$. A morphism of graphical collections $f\colon \Orb\to \Q$ is a collection of linear maps $f(\Gr)\colon \Orb(\Gr)\to \Q(\Gr)$ compatible with isomorphisms of graphs.

For a graph $\Gr$ and a subset of vertices $S$,  the \textit{induced subgraph} is the graph $\Gr|_S$ with vertex set $S$ and edges coming from the original graph.
\begin{defi} 
\begin{itemize}
\item A tube of a graph $\Gr$ is a non-empty subset $G$ of vertices such that the induced subgraph $\Gr|_G$ is connected.  If the tube consists of one vertex, we call it trivial.
\item A \textit{partition of a graph} $\Gr$ is a partition of the vertex set whose blocks are tubes. We denote by $\parti(\Gr)$ the set of partitions of the graph $\Gr$. We use the notation $I\vdash \Gr$ for graph partitions.
\end{itemize}
\end{defi}
The operation dual to induced graphs is the contraction by partitions. 
\begin{defi}
\begin{itemize}
    \item For a partition $I$ of graph $\Gr$, the contracted graph, denoted $\Gr/I$, is the graph obtained from $\Gr$  by contracting each block of $I$ to a single vertex. Explicitly, vertices of $\Gr/I$ are partition blocks and edges are pairs $\{G\},\{H\}$ of blocks such that their union $G\cup H$ is a tube of $\Gr$. 
    \item Given a tube $G\subset V_{\Gr}$, we denote by $\Gr/G$ the contracted graph obtained from $\Gr$ by collapsing $G$ to a single vertex. Explicitly, $\Gr/G$ is the contracted graph associated with a partition $I=\{G\}\cup \{\{v\}|v\not\in G\}$.
    \end{itemize}
\end{defi}

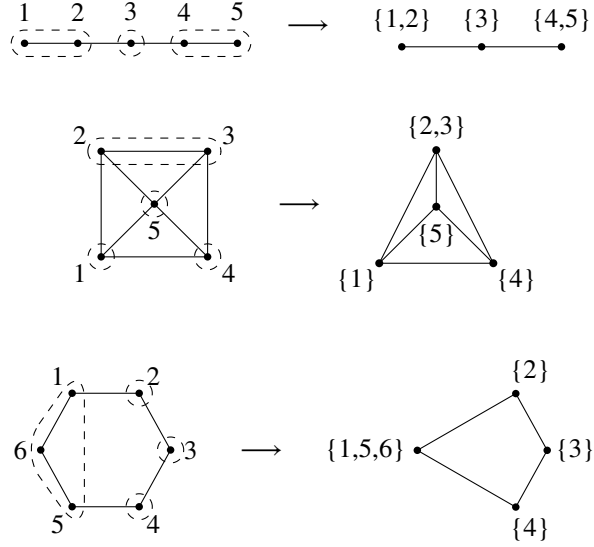
\begin{figure}[ht]
  \centering
  \begin{gather*}
  \vcenter{\hbox{\begin{tikzpicture}[scale=0.7]
    \fill (0,0) circle (2pt);
    \node at (0,0.6) {1};
    \fill (1,0) circle (2pt);
    \node at (1,0.6) {2};
    \fill (2,0) circle (2pt);
    \node at (2,0.6) {3};
    \fill (3,0) circle (2pt);
    \node at (3,0.6) {4};
    \fill (4,0) circle (2pt);
    \node at (4,0.6) {5};
    \draw (0,0)--(1,0)--(2,0)--(3,0)--(4,0);
    \draw[dashed, rounded corners=5pt] (-0.25,-0.25) rectangle ++(1.5,0.5);
    \draw[dashed, rounded corners=5pt] (2.75,-0.25) rectangle ++(1.5,0.5);
    \draw[dashed] (2,0) circle (7pt);
    \end{tikzpicture}}}
    \quad
    \longrightarrow
    \quad
  \vcenter{\hbox{\begin{tikzpicture}[scale=0.7]
    \fill (0,0) circle (2pt);
    \node at (0,0.5) {\{1,2\}};
    \fill (1.5,0) circle (2pt);
    \node at (1.5,0.5) {\{3\}};
    \fill (3,0) circle (2pt);
    \node at (3,0.5) {\{4,5\}};
    \draw (0,0)--(1.5,0)--(3,0);    
    \end{tikzpicture}}}
    \\
    \\
  \vcenter{\hbox{\begin{tikzpicture}[scale=0.7]
    \fill (0,0) circle (2pt);
    \node at (-0.4,-0.3) {1};
    \fill (2,0) circle (2pt);
    \node at (2.4,-0.3) {4};
    \fill (0,2) circle (2pt);
    \node at (-0.4,2.3) {2};
    \fill (1,1) circle (2pt);
    \node at (1,0.5) {5};
    \fill (2,2) circle (2pt);
    \node at (2.4,2.3) {3};
    \draw (0,0)--(2,0)--(2,2)--(0,2)--cycle;
    \draw (0,0)--(1,1)--(2,2);
    \draw (2,0)--(1,1)--(0,2);
    \draw[dashed] (0,0) circle (7pt);
    \draw[dashed] (1,1) circle (7pt);
    \draw[dashed] (2,0) circle (7pt);
    \draw[dashed, rounded corners=5pt] (-0.25,1.75) rectangle ++(2.5,0.5);
    \end{tikzpicture}}}
    \quad
    \longrightarrow
    \vcenter{\hbox{\begin{tikzpicture}[scale=0.75]
    \fill (0,0) circle (2pt);
    \node at (-0.4,-0.3) {\{1\}};
    \fill (1,1) circle (2pt);
    \node at (1,0.5) {\{5\}};
    \fill (1,2) circle (2pt);
    \node at (1,2.4) {\{2,3\}};
    \fill (2,0) circle (2pt);
    \node at (2.4,-0.3) {\{4\}};
    \draw (0,0)--(2,0)--(1,2)-- cycle;
    \draw (0,0)--(1,1)--(1,2);
    \draw (2,0)--(1,1);
    \end{tikzpicture}}}
    \\
    \\
    \vcenter{\hbox{\begin{tikzpicture}[scale=0.7]
    \fill (-0.63,1.075)  circle (2pt);
    \node at (-0.9,1.4) {1};
    \fill (0.63,1.075)  circle (2pt);
    \draw[dashed] (0.63,1.075) circle  (7pt);
    \node at (0.9,1.4) {2};
    \fill (-1.22,0) circle (2pt);
    \node at (-1.6,0) {6};
    \fill (1.22,0) circle (2pt);
    \node at (1.6,0) {3};
    \draw[dashed] (1.22,0) circle  (7pt);
    \fill (-0.63,-1.075)  circle (2pt);
    \node at (-0.9,-1.4) {5};
    \fill (0.63,-1.075)  circle (2pt);
    \node at (0.9,-1.4) {4};
    \draw[dashed] (0.63,-1.075) circle  (7pt);
    \draw (-1.22,0)--(-0.63,1.075)--(0.63,1.075)--(1.22,0)--(0.63,-1.075)--(-0.63,-1.075)--cycle;
    \draw[dashed] (-1.6,0)[rounded corners=15pt]--(-0.4,1.7)[rounded corners=15pt]--(-0.4,-1.7)[rounded corners=12pt]--cycle;
    \end{tikzpicture}}}
    \quad\longrightarrow\quad
    \vcenter{\hbox{\begin{tikzpicture}[scale=0.7]
     \fill (0.63,1.075)  circle (2pt);
     \node at (0.9,1.5) {\{2\}};
    \fill (-1.22,0) circle (2pt);
    \node at (-2.2,0) {\{1,5,6\}};
    \fill (1.22,0) circle (2pt);
    \node at (1.75,0) {\{3\}};
    \fill (0.63,-1.075)  circle (2pt);
    \node at (0.9,-1.5) {\{4\}};
    \draw (-1.22,0)--(0.63,1.075)--(1.22,0)--(0.63,-1.075)--cycle;
    \end{tikzpicture}}}
  \end{gather*}
  \caption{Examples of partitions of graphs and associated contractions.}
  \label{contrpic}
\end{figure}
For a pair of graphical collection $\Pop,\Q$, we define their \textit{contraction product}, that is the graphical collection $\Pop\circ\Q$ with components
\begin{equation}
\label{eq::contract::product}
    (\Pop \circ \Q)(\Gr) := \bigoplus_{I \vdash \Gr} \Pop(\Gr/I) \otimes \bigotimes_{G \in I} \Q(\Gr|_G),
\end{equation}  
where the sum ranges over all partitions of $\Gr$. For an element $\alpha\in \Pop(\Gr/I)$ and collection of elements $\beta_G\in \Q(\Gr|_G)$, we let $(\alpha_{\Gr/I};\beta_{G_1},\beta_{G_2},\cdots,\beta_{G_k})$ be the corresponding element in the product $(\Pop\circ\Q)(\Gr)$. This operation is associative, and the graphical collection $\mathbb{1}$ defined as
\[
\mathbb{1}(\Gr):= \begin{cases}
\mathsf{k}, \text{ for } \Gr \cong \Path_1,
\\
0, \text{ otherwise}.
\end{cases}
\] where $\mathsf{k}$ is the ground field, is the unit for this operation, $\mathbb{1}\circ\Pop\cong \Pop\cong \Pop\circ \mathbb{1}$. 
\begin{defi}[Monoidal definition of contractads]\label{def:monoidal}
A contractad is a monoid in the monoidal category of graphical collections equipped with the contraction product $\circ$.
\end{defi}
By definition, a contractad is a triple $(\Pop,\gamma,\eta)$, where $\Pop$ is a graphical collection, $\gamma$ is a product map $\gamma\colon \Pop\circ\Pop\to \Pop$, that is a collection of maps
\[
    \gamma_I^{\Gr}\colon \Pop(\Gr/I)\otimes \bigotimes_{G \in I} \Pop(\Gr|_G)\to \Pop(\Gr),
\] for each pair $(\Gr,I)$ of graph $\Gr$ and partition $I\vdash \Gr$, and $\eta\colon \mathbb{1}\to \Pop$ is a unit $u\colon \mathsf{k}\to \Pop(\Path_1)$. We let $\Id:=u(1)\in \Pop(\Path_1)$ be the \textit{identity element}, that is a unit for product $\gamma$.

In a dual fashion, we define a cocontractad as a comonoid in the category of graphical collections. In other words, it is a triple $(\Q,\Delta,\epsilon)$, where $\Delta\colon \Q\to\Q\circ\Q$ is a coproduct and $\epsilon\colon \Q(\Path_1)\to \mathsf{k}$ is a counit. In particular, for a contractad $\Pop$, its linear dual graphical collection $\Pop^*$ forms a cocontractad with the coproduct $\Delta:=\gamma^*$ and counit $\epsilon:=\eta^*$.

\begin{remark}
Note that in the definition of contractads, we could replace vector spaces with any symmetric monoidal category. For example, we define a set-contractad as a monoid in the category of set-graphical collection $\Pop\colon\mathsf{CGr}^{\mathrm{op}}\to \mathsf{Sets}$ with values in sets for set-contraction product
\[
(\Pop \circ \Q)(\Gr) := \coprod_{I \vdash \Gr} \Pop(\Gr/I) \times \prod_{G \in I} \Q(\Gr|_G),
\] If we take a linear span in each component of a set contractad $\Pop$, we obtain the contractad $\mathsf{k}[\Pop]$ in the category of vector spaces.
\end{remark}

An equivalent way to present contractads is via \textit{infinitesimal compositions}. When $\Pop$ is a contractad, for each pair of a graph $\Gr$ and tube $G$, there exists a map \[\circ^{\Gr}_G\colon \Pop(\Gr/G)\otimes \Pop(\Gr|_G)\to \Pop(\Gr),\] called the infinitesimal composition and defined by
\begin{gather}
 \Pop(\Gr/G)\otimes\Pop(\Gr|_G) \cong \Pop(\Gr/G)\otimes\Pop(\Gr|_G)\otimes \bigotimes_{v \not\in G} \mathsf{k} \overset{\Id \otimes u^{\otimes}}{\hookrightarrow} \Pop(\Gr/G)\otimes\Pop(\Gr|_G)\otimes \bigotimes_{v \not\in G}\Pop(\Gr|_{\{v\}}) \overset{\gamma}{\rightarrow} \Pop(\Gr).
\\
\alpha\circ^{\Gr}_G \beta:=\gamma(\alpha; \Id,\Id,\cdots,\beta,\cdots, \Id).
\end{gather}
Conversely, from infinitesimal compositions, one can recover all the structure maps of a contractad. 
\subsection{Examples of contractads}\label{sec::examples_of_contractads} We list several examples of contractads, arising in combinatorics of graphs. For other examples of contractads, see~\cite{lyskov2023contractads,khoroshkin2024hilbert}. The description of the contractad $\Plan$ of planar equivalent binary trees we delay to Section~\ref{sec::planartrees}.
\subsubsection{Commutative contractad}
The simplest example of a contractad is the commutative contractad $\Com$ in the category of sets. This contractad has the components $\Com(\Gr)=\{*\}$ with the obvious infinitesimal compositions \[
\circ^{\Gr}_G\colon \Com(\Gr/G)\times \Com(\Gr|_G)\to\Com(\Gr),\quad (*,*)\mapsto*
\]
\subsubsection{Acyclic Directions}
A direction of a graph is an assignment of direction for each edge, i.e., is a function on the edge set $d\colon E_{\Gr}\to \{\rightarrow,\leftarrow\}$. A direction of a graph is acyclic, if there are no directed cycles $v_1\to v_2\to v_3\to\cdots\to v_n\to v_1$. For a graph $\Gr$, we let $\Ass(\Gr)$ be the set of acyclic directions of the underlying graph. Thanks to deletion-contraction formula for the number of acyclic directions, we have $|\Ass(\Gr)|=(-1)^{|V_{\Gr}|}\chi_{\Gr}(-1)$, where $\chi_{\Gr}(q)$ is a chromatic polynomial.

The infinitesimal compositions $\circ^{\Gr}_G\colon \Ass(\Gr/G)\times\Ass(\Gr|_G)\to \Ass(\Gr)$ are defined as follows. For a pair of acyclic directions $\alpha\in \Ass(\Gr/G)$ and $\beta\in\Ass(\Gr|_G)$, we define a new one $\alpha\circ^{\Gr}_G\beta$ by the rule
\[
\begin{cases}
v\to v'\text{, if }v,v'\not\in G\text{ and } v\to v' \text{ in }\alpha,
\\
w\to w'\text{, if }w,w'\in G\text{ and } w\to w' \text{ in }\beta,
\\
v\to w\text{, if }v\not\in G, w\in G\text{ and } v\to \{G\} \text{ in }\alpha,
\\
w\to v\text{, if }v\not\in G, w\in G\text{ and } \{G\}\to v \text{ in }\alpha,
\end{cases}
\] \begin{example}
For a graph $\Gr=\vcenter{\hbox{\begin{tikzpicture}[scale=0.3]
    \fill (0,0) circle (2pt);
    \fill (0,1.5) circle (2pt);
    \fill (1.5,0) circle (2pt);
    \fill (1.5,1.5) circle (2pt);
    \draw (0,0)--(1.5,0)--(1.5,1.5)--(0,1.5)-- cycle;
    \draw (0,0)->(1.5,1.5);
    \node at (-0.25,1.75) {\scriptsize$1$};
    \node at (1.75,1.75) {\scriptsize$2$};
    \node at (1.75,-0.25) {\scriptsize$3$};
    \node at (-0.25,-0.25) {\scriptsize$4$};
    \end{tikzpicture}}}$ and tube $G=\{2,4\}$, we have 
    \begin{gather*}
    \hbox{\begin{tikzpicture}[scale=0.6, edge/.style={->,> = latex, thick}]
    \fill (0,0) circle (2pt);
    \fill (1.5,0) circle (2pt);
    \fill (3,0) circle (2pt);
    \node at (0,0.4) {$1$};
    \node at (1.5,0.45) {\small$\{2,4\}$};
    \node at (3,0.4) {$3$};
    \draw[edge] (1.5,0)--(0,0);
    \draw[edge] (1.5,0)--(3,0);
    \end{tikzpicture}}
    \circ^{\Gr}_{G}
    \hbox{\begin{tikzpicture}[scale=0.6, edge/.style={->,> = latex, thick}]
    \fill (0,0) circle (2pt);
    \fill (1.5,0) circle (2pt);
    \node at (0,0.4) {$4$};
    \node at (1.5,0.4) {$2$};
    \draw[edge] (0,0)--(1.5,0);
    \end{tikzpicture}}=
    \vcenter{\hbox{\begin{tikzpicture}[scale=0.6, edge/.style={->,> = latex, thick}]
    \fill (0,0) circle (2pt);
    \fill (0,1.5) circle (2pt);
    \fill (1.5,0) circle (2pt);
    \fill (1.5,1.5) circle (2pt);
    \draw[edge] (1.5,1.5)--(0,1.5);%21
    \draw[edge] (1.5,1.5)--(1.5,0);%23
    \draw[edge] (0,0)--(1.5,0);%43
    \draw[edge] (0,0)--(0,1.5);%41
    \draw[edge] (0,0)--(1.5,1.5);%42
    \node at (-0.25,1.75) {$1$};
    \node at (1.75,1.75) {$2$};
    \node at (1.75,-0.25) {$3$};
    \node at (-0.25,-0.25) {$4$};
    \end{tikzpicture}}},\quad 
    \hbox{\begin{tikzpicture}[scale=0.6, edge/.style={->,> = latex, thick}]
    \fill (0,0) circle (2pt);
    \fill (1.5,0) circle (2pt);
    \fill (3,0) circle (2pt);
    \node at (0,0.4) {$1$};
    \node at (1.5,0.45) {\small$\{2,4\}$};
    \node at (3,0.4) {$3$};
    \draw[edge] (0,0)--(1.5,0);
    \draw[edge] (1.5,0)--(3,0);
    \end{tikzpicture}}
    \circ^{\Gr}_{G}
    \hbox{\begin{tikzpicture}[scale=0.6, edge/.style={->,> = latex, thick}]
    \fill (0,0) circle (2pt);
    \fill (1.5,0) circle (2pt);
    \node at (0,0.4) {$4$};
    \node at (1.5,0.4) {$2$};
    \draw[edge] (1.5,0)--(0,0);
    \end{tikzpicture}}=
    \vcenter{\hbox{\begin{tikzpicture}[scale=0.6, edge/.style={->,> = latex, thick}]
    \fill (0,0) circle (2pt);
    \fill (0,1.5) circle (2pt);
    \fill (1.5,0) circle (2pt);
    \fill (1.5,1.5) circle (2pt);
    \draw[edge] (0,1.5)--(1.5,1.5);%12
    \draw[edge] (1.5,1.5)--(1.5,0);%23
    \draw[edge] (0,0)--(1.5,0);%43
    \draw[edge] (0,1.5)--(0,0);%14
    \draw[edge] (1.5,1.5)--(0,0);%24
    \node at (-0.25,1.75) {$1$};
    \node at (1.75,1.75) {$2$};
    \node at (1.75,-0.25) {$3$};
    \node at (-0.25,-0.25) {$4$};
    \end{tikzpicture}}}
\end{gather*}
\end{example} 
\begin{remark}
This contractad was first introduced in~\cite[Sec.~5.1]{lyskov2023contractads} for the description of the topological contractad $\mathcal{D}_1$ generalising the little intervals operad. This contractad encodes configurations of intervals in unit one with certain intersection conditions depending on graphs. In particular, the author showed that acyclic directions encode connected components of this contractad $\pi_0(\mathcal{D}_1)\cong \Ass$.
\end{remark}
\subsubsection{Rooted spanning trees}\label{subsec:spantrees}
The next example of a contractad is the contractad of rooted spanning trees, that generalises the rooted trees operad introduced by Chapoton~\cite{chapoton2001pre}. A spanning tree of $\Gr$ is a subgraph (not induced) $T$ of $\Gr$ that is a tree on the same vertex set as $\Gr$.  A rooted spanning tree is a spanning tree with a selected vertex, root.

We define the rooted spanning trees contractad, denoted $\RST$, as follows. Each component $\RST(\Gr)$ is the vector space generated by spanning trees with a marked vertex: the root. For a rooted tree $(T,r)$ and its tube $G$, we let $(T,r)|_G:=(T|_G,r_G)$ be the induced rooted tree, where $r_G$ is the nearest vertex to $r$ from $G$, and $(T,r)/G:=(T/G,r/G)$ be the contracted rooted tree, where $r/G$ is the image of $r$ under contraction $\Gr\to\Gr/G$. The infinitesimal composition $\circ^{\Gr}_G\colon \RST(\Gr/G)\otimes\RST(\Gr|_G)\to \RST(\Gr)$ is defined as
\[
(T_1,r_1)\circ^{\Gr}_G (T_2,r_2)=\sum_{ (T,r)/G=(T_1,r_1), (T,r)|_G=(T_2,r_2), } (T,r),
\] where the sum ranges over rooted spanning trees $(T,r)$ with contracted and induced rooted trees $(T_1,r_1)$ and $(T_2,r_2)$ respectively.
\begin{example} For the complete graph $\K_4$ and tube $\{1,2\}$, we have
\begin{gather*}
%first
\vcenter{\hbox{\begin{tikzpicture}[
        scale=0.5,
        vert/.style={circle,  draw=black!30!black, fill=white!5, thick, minimum size=1mm, inner sep=1.5pt},
        root/.style={rectangle,  draw=black!30!black, fill=white!5, thick, minimum size=1mm, inner sep=1.5pt},
        edge/.style={-,black!30!black, thick},
        gedge/.style={-, densely dotted, thick},
        ]
        \node[root] (1) at (0,0) {$\scalebox{0.6}{\{1,2\}}$};
        \node[vert] (2) at (2,0) {\small$4$};
        \node[vert] (3) at (1,1.73) {\small$3$};
        \draw[edge] (1)--(2);
        \draw[gedge] (2)--(3);
        \draw[edge] (1)--(3);
    \end{tikzpicture}}}\circ^{\K_4}_{\{1,2\}} \vcenter{\hbox{\begin{tikzpicture}[
        scale=0.5,
        vert/.style={circle,  draw=black!30!black, fill=white!5, thick, minimum size=1mm, inner sep=1.5pt},
        root/.style={rectangle,  draw=black!30!black, fill=white!5, thick, minimum size=1mm},
        edge/.style={-,black!30!black, thick},
        gedge/.style={-, densely dotted, thick},
        ]
        \node[root] (1) at (0,0) {\small$1$};
        \node[vert] (2) at (0,2) {\small$2$};
        \draw[edge] (1)--(2);
\end{tikzpicture}}}=\vcenter{\hbox{\begin{tikzpicture}[
        scale=0.5,
        vert/.style={circle,  draw=black!30!black, fill=white!5, thick, minimum size=1mm, inner sep=1.5pt},
        root/.style={rectangle,  draw=black!30!black, fill=white!5, thick, minimum size=1mm, inner sep=1.5pt},
        edge/.style={-,black!30!black, thick},
        gedge/.style={-, densely dotted, thick},
        ]
        \node[root] (1) at (0,0) {\small$1$};
        \node[vert] (2) at (0,2) {\small$2$};
        \node[vert] (3) at (2,2) {\small$3$};
        \node[vert] (4) at (2,0) {\small$4$};
        \draw[edge] (1)--(2);
        \draw[gedge] (2)--(3);
        \draw[gedge] (3)--(4);
        \draw[edge] (4)--(1);
        \draw[edge] (1)--(3);
        \draw[gedge] (2)--(4);
    \end{tikzpicture}}}+\vcenter{\hbox{\begin{tikzpicture}[
        scale=0.5,
        vert/.style={circle,  draw=black!30!black, fill=white!5, thick, minimum size=1mm, inner sep=1.5pt},
        root/.style={rectangle,  draw=black!30!black, fill=white!5, thick, minimum size=1mm, inner sep=1.5pt},
        edge/.style={-,black!30!black, thick},
        gedge/.style={-, densely dotted, thick},
        ]
        \node[root] (1) at (0,0) {\small$1$};
        \node[vert] (2) at (0,2) {\small$2$};
        \node[vert] (3) at (2,2) {\small$3$};
        \node[vert] (4) at (2,0) {\small$4$};
        \draw[edge] (1)--(2);
        \draw[gedge] (2)--(3);
        \draw[gedge] (3)--(4);
        \draw[gedge] (4)--(1);
        \draw[edge] (1)--(3);
        \draw[edge] (2)--(4);
    \end{tikzpicture}}}
\\
%second
\vcenter{\hbox{\begin{tikzpicture}[
        scale=0.5,
        vert/.style={circle,  draw=black!30!black, fill=white!5, thick, minimum size=1mm, inner sep=1.5pt},
        root/.style={rectangle,  draw=black!30!black, fill=white!5, thick, minimum size=1mm, inner sep=1.5pt},
        edge/.style={-,black!30!black, thick},
        gedge/.style={-, densely dotted, thick},
        ]
        \node[vert] (1) at (0,0) {$\scalebox{0.6}{\{1,2\}}$};
        \node[vert] (2) at (2,0) {\small$4$};
        \node[root] (3) at (1,1.73) {\small$3$};
        \draw[gedge] (1)--(2);
        \draw[edge] (2)--(3);
        \draw[edge] (1)--(3);
    \end{tikzpicture}}}\circ^{\K_4}_{\{1,2\}}  \vcenter{\hbox{\begin{tikzpicture}[
        scale=0.5,
        vert/.style={circle,  draw=black!30!black, fill=white!5, thick, minimum size=1mm, inner sep=1.5pt},
        root/.style={rectangle,  draw=black!30!black, fill=white!5, thick, minimum size=1mm},
        edge/.style={-,black!30!black, thick},
        gedge/.style={-, densely dotted, thick},
        ]
        \node[root] (1) at (0,0) {\small$1$};
        \node[vert] (2) at (0,2) {\small$2$};
        \draw[edge] (1)--(2);
\end{tikzpicture}}}=\vcenter{\hbox{\begin{tikzpicture}[
        scale=0.5,
        vert/.style={circle,  draw=black!30!black, fill=white!5, thick, minimum size=1mm, inner sep=1.5pt},
        root/.style={rectangle,  draw=black!30!black, fill=white!5, thick, minimum size=1mm, inner sep=1.5pt},
        edge/.style={-,black!30!black, thick},
        gedge/.style={-, densely dotted, thick},
        ]
        \node[vert] (1) at (0,0) {\small$1$};
        \node[vert] (2) at (0,2) {\small$2$};
        \node[root] (3) at (2,2) {\small$3$};
        \node[vert] (4) at (2,0) {\small$4$};
        \draw[edge] (1)--(2);
        \draw[gedge] (2)--(3);
        \draw[edge] (3)--(4);
        \draw[gedge] (4)--(1);
        \draw[edge] (1)--(3);
        \draw[gedge] (2)--(4);
        \end{tikzpicture}}}
\end{gather*}
\end{example}
\subsubsection{Hamiltonian paths}
We introduce the main example of a contractad. Recall that a \textit{Hamiltonian path} in a graph $\Gr$ is a path in $\Gr$ that visits each vertex exactly once. Note that a Hamiltonian path admits two natural directions from one endpoint to another one. We shall use notation $(v_1,\cdots,v_n)$ for directed Hamiltonian path $v_1\to v_2\to \cdots\to  v_{n-1}\to v_n$.

For a graph $\Gr$, let $\Ham(\Gr)$ be the linear span of all directed Hamiltonian paths in $\Gr$. For a tube $G\subset V_{\Gr}$, we define the substitution of paths
\[
\circ^{\Gr}_G\colon \Ham(\Gr/G)\otimes \Ham(\Gr|_G)\to \Ham(\Gr)
\] as follows. Let $\Path=(v_1,\cdots,v_{k-1},\{G\},v_{k+1},\cdots,v_n)$ and $\Path'=(w_1,w_2,\cdots, w_m)$ be directed Hamiltonian paths in contracted $\Gr/G$ and induced $\Gr|_G$ graphs respectively. If $v_{k-1}$ is adjacent to $w_1$ and $w_m$ to $v_{k+1}$, we put $\Path\circ^{\Gr}_G \Path'=(v_1,\cdots,v_{k-1},w_1,w_2,\cdots, w_m,v_{k+1},\cdots,v_n)$, otherwise, we put zero. 

\begin{example}
For a graph $\Gr=\vcenter{\hbox{\begin{tikzpicture}[scale=0.3]
    \fill (0,0) circle (2pt);
    \fill (0,1.5) circle (2pt);
    \fill (1.5,0) circle (2pt);
    \fill (1.5,1.5) circle (2pt);
    \draw (0,0)--(1.5,0)--(1.5,1.5)--(0,1.5)-- cycle;
    \draw (0,0)->(1.5,1.5);
    \node at (-0.25,1.75) {\scriptsize$1$};
    \node at (1.75,1.75) {\scriptsize$2$};
    \node at (1.75,-0.25) {\scriptsize$3$};
    \node at (-0.25,-0.25) {\scriptsize$4$};
    \end{tikzpicture}}}$ and tube $G=\{1,2\}$, we have
\begin{gather*}
    %3421
    \vcenter{\hbox{\begin{tikzpicture}[scale=0.6, edge/.style={->,> = latex, thick}]
    \fill (0,0) circle (2pt);
    \fill (1.5,0) circle (2pt);
    \fill (0.75,1.5) circle (2pt);
    \draw[dashed] (0.75,1.5)--(1.5,0);%(12)3
    \draw[edge] (1.5,0)--(0,0);%34
    \draw[edge] (0,0)--(0.75,1.5);%4(12)
    \node at (0.75,1.8) {\small$\{1,2\}$};
    \node at (1.75,-0.25) {$3$};
    \node at (-0.25,-0.25) {$4$};
    \end{tikzpicture}}}   \circ^{\Gr}_{G}
    \hbox{\begin{tikzpicture}[scale=0.6, edge/.style={->,> = latex, thick}]
    \fill (0,0) circle (2pt);
    \fill (1.5,0) circle (2pt);
    \node at (0,0.4) {$1$};
    \node at (1.5,0.4) {$2$};
    \draw[edge] (1.5,0)--(0,0);   \end{tikzpicture}}=\vcenter{\hbox{\begin{tikzpicture}[scale=0.6, edge/.style={->,> = latex, thick}]
    \fill (0,0) circle (2pt);
    \fill (0,1.5) circle (2pt);
    \fill (1.5,0) circle (2pt);
    \fill (1.5,1.5) circle (2pt);
    \draw[edge] (1.5,1.5)--(0,1.5);%21
    \draw[dashed] (1.5,1.5)--(1.5,0);%23
    \draw[edge] (1.5,0)--(0,0);%34
    \draw[dashed] (0,0)--(0,1.5);%41
    \draw[edge] (0,0)--(1.5,1.5);%42
    \node at (-0.25,1.75) {$1$};
    \node at (1.75,1.75) {$2$};
    \node at (1.75,-0.25) {$3$};
    \node at (-0.25,-0.25) {$4$};
    \end{tikzpicture}}},\qquad
    %4123
    \vcenter{\hbox{\begin{tikzpicture}[scale=0.6, edge/.style={->,> = latex, thick}]
    \fill (0,0) circle (2pt);
    \fill (1.5,0) circle (2pt);
    \fill (0.75,1.5) circle (2pt);
    \draw[edge] (0.75,1.5)--(1.5,0);%(12)3
    \draw[dashed] (0,0)--(1.5,0);%43
    \draw[edge] (0,0)--(0.75,1.5);%4(12)
    \node at (0.75,1.8) {\small$\{1,2\}$};
    \node at (1.75,-0.25) {$3$};
    \node at (-0.25,-0.25) {$4$};
    \end{tikzpicture}}}   \circ^{\Gr}_{G}
    \hbox{\begin{tikzpicture}[scale=0.6, edge/.style={->,> = latex, thick}]
    \fill (0,0) circle (2pt);
    \fill (1.5,0) circle (2pt);
    \node at (0,0.4) {$1$};
    \node at (1.5,0.4) {$2$};
    \draw[edge] (0,0)--(1.5,0);   \end{tikzpicture}}=\vcenter{\hbox{\begin{tikzpicture}[scale=0.6, edge/.style={->,> = latex, thick}]
    \fill (0,0) circle (2pt);
    \fill (0,1.5) circle (2pt);
    \fill (1.5,0) circle (2pt);
    \fill (1.5,1.5) circle (2pt);
    \draw[edge] (0,1.5)--(1.5,1.5);%12
    \draw[edge] (1.5,1.5)--(1.5,0);%23
    \draw[dashed] (0,0)--(1.5,0);%43
    \draw[edge] (0,0)--(0,1.5);%41
    \draw[dashed] (0,0)--(1.5,1.5);%42
    \node at (-0.25,1.75) {$1$};
    \node at (1.75,1.75) {$2$};
    \node at (1.75,-0.25) {$3$};
    \node at (-0.25,-0.25) {$4$};
    \end{tikzpicture}}}.
\end{gather*} But since vertices $1$ and $3$ are not adjacent, we have
\begin{gather*}    \vcenter{\hbox{\begin{tikzpicture}[scale=0.6, edge/.style={->,> = latex, thick}]
    \fill (0,0) circle (2pt);
    \fill (1.5,0) circle (2pt);
    \fill (0.75,1.5) circle (2pt);
    \draw[edge] (0.75,1.5)--(1.5,0);%(12)3
    \draw[dashed] (0,0)--(1.5,0);%43
    \draw[edge] (0,0)--(0.75,1.5);%4(12)
    \node at (0.75,1.8) {\small$\{1,2\}$};
    \node at (1.75,-0.25) {$3$};
    \node at (-0.25,-0.25) {$4$};
    \end{tikzpicture}}}   \circ^{\Gr}_{G}
    \hbox{\begin{tikzpicture}[scale=0.6, edge/.style={->,> = latex, thick}]
    \fill (0,0) circle (2pt);
    \fill (1.5,0) circle (2pt);
    \node at (0,0.4) {$1$};
    \node at (1.5,0.4) {$2$};
    \draw[edge] (1.5,0)--(0,0);   \end{tikzpicture}}=0.
\end{gather*}  
\end{example}
By direct inspections, we see that path substitutions endow the graphical collection $\Ham$ into the contractad, that we call the contractad of Hamiltonian paths
\subsection{Presentation of contractads}
Similarly to operads, contractads can be defined in terms of rooted trees and their compositions with small modifications. A \textit{rooted tree} is a connected directed tree $T$ in which each vertex has at least one input edge and exactly one output edge. This tree should have exactly one external outgoing edge, output. The endpoint of this edge is called the \textit{root}. The endpoints of incoming external edges that are not vertices are called \textit{leaves}. A tree with a single vertex is called a \textit{corolla}. For a rooted tree $T$ and edge $e$, let $T_e$ be the subtree  with the root at $e$.

\begin{defi}
For a connected graph $\Gr$, a $\Gr$-\textit{admissible} rooted tree is a rooted tree $T$ with leaves labeled by the vertex set $V_{\Gr}$ of the given graph such that, for each edge $e$ of the tree, the leaves of subtree $T_e$ form a tube of $\Gr$.
\end{defi} 

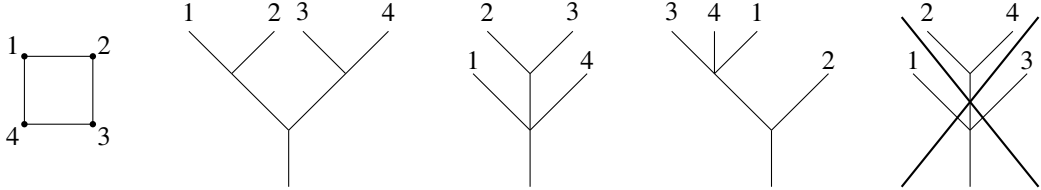
\begin{figure}[ht] 
  \centering
\[
\vcenter{\hbox{\begin{tikzpicture}[scale=0.6]
    \fill (0,0) circle (2pt);
    \fill (0,1.5) circle (2pt);
    \fill (1.5,0) circle (2pt);
    \fill (1.5,1.5) circle (2pt);
    \draw (0,0)--(1.5,0)--(1.5,1.5)--(0,1.5)-- cycle;
    \node at (-0.25,1.75) {$1$};
    \node at (1.75,1.75) {$2$};
    \node at (1.75,-0.25) {$3$};
    \node at (-0.25,-0.25) {$4$};
  \end{tikzpicture}}}
\qquad
\vcenter{\hbox{\begin{tikzpicture}[scale=0.75]
        \draw (0,0)--(0,1);
        \draw (0,1)--(1,2);
        \draw (0,1)--(-1,2);
        \draw (1,2)--(1.75,2.75);
        \draw (1,2)--(0.25,2.75);
        \draw (-1,2)--(-1.75,2.75);
        \draw (-1,2)--(-0.25,2.75);
        \node at (1.75,3.1) {$4$};
        \node at (0.25,3.1) {$3$};
        \node at (-1.75,3.1) {$1$};
        \node at (-0.25,3.1) {$2$};
  \end{tikzpicture}}}
\qquad 
\vcenter{\hbox{\begin{tikzpicture}[scale=0.75]
        \draw (0,0)--(0,1);
        \draw (0,1)--(1,2);
        \draw (0,1)--(-1,2);
        \draw (0,1)--(0,2);
        \draw (0,2)--(0.75,2.75);
        \draw (0,2)--(-0.75,2.75);
        \node at (-1,2.25) {$1$};
        \node at (-0.75,3.1) {$2$};
        \node at (0.75,3.1) {$3$};
        \node at (1,2.25) {$4$};
  \end{tikzpicture}}}
\qquad 
\vcenter{\hbox{\begin{tikzpicture}[scale=0.75]
        \draw (0,0)--(0,1);
        \draw (0,1)--(1,2);
        \draw (0,1)--(-1,2);
        \draw (-1,2)--(-1.75,2.75);
        \draw (-1,2)--(-1,2.75);
        \draw (-1,2)--(-0.25,2.75);
        \node at (-1.75,3.1) {$3$};
        \node at (-1,3.1) {$4$};
        \node at (-0.25,3.1) {$1$};
        \node at (1,2.25) {$2$};
  \end{tikzpicture}}}
\qquad 
\vcenter{\hbox{\begin{tikzpicture}[scale=0.75]
        \draw[thick] (-1.2,3)--(1.2,0);
        \draw[thick] (-1.2,0)--(1.2,3);
        \draw (0,0)--(0,1);
        \draw (0,1)--(1,2);
        \draw (0,1)--(-1,2);
        \draw (0,1)--(0,2);
        \draw (0,2)--(0.75,2.75);
        \draw (0,2)--(-0.75,2.75);
        \node at (-1,2.25) {$1$};
        \node at (-0.75,3.1) {$2$};
        \node at (0.75,3.1) {$4$};
        \node at (1,2.25) {$3$};
  \end{tikzpicture}}}
\]
\caption{Graph $\Cyc_4$ (on the left side) and examples of $\Cyc_4$-admissible trees. The first three are $\Cyc_4$-admissible, but the fourth is not, since leaves 2,4 do not form a tube.}
\label{roottrees}
\end{figure}

 We denote by $\Tree(\Gr)$ the set of all $\Gr$-admissible rooted trees. Note that the corolla with leaves labeled by the vertex set is always $\Gr$-admissible.
 \begin{example}
For particular types of graphs, we have
\begin{itemize}
\item For complete graphs, $\K_n$-admissible trees are ordinary rooted trees since each vertex subset of a complete graph is a tube.

\item For  paths, $\Path_n$-admissible trees are those that can be embedded in the plane with leaves $[n]=\{1,2,3,...,n\}$ arranged in increasing order. Indeed, this follows from the  fact  that tubes of $\Path_n$ are ordered intervals of $[n]$.
 
\item For  cycles, $\Cyc_n$-admissible trees are those that can be embedded in the plane such that leaves are arranged in cyclic order, as in Figure~\ref{roottrees}.
\end{itemize}
\end{example}
For a partition $I$ of the graph $\Gr$, we define the grafting map
\[
    \Tree(\Gr/I)\times\prod_{G \in I} \Tree(\Gr|_G)\to \Tree(\Gr),
\] which joins roots of $\Gr|_G$-admissible trees to leaves of $\Gr/I$-admissible trees, as in Figure~\ref{substitution}. These operations endow the graphical collection $\Tree$ with the contractad structure.
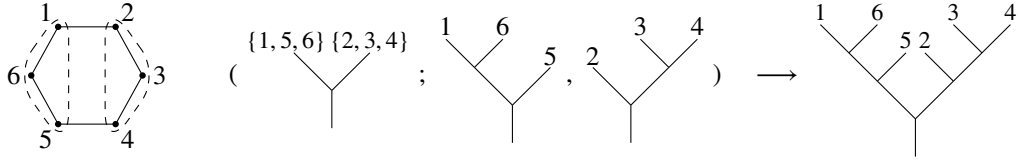
\begin{figure}[ht]
  \centering
  \[
  \vcenter{\hbox{\begin{tikzpicture}[scale=0.6]
    %vertices
    \fill (-0.63,1.075)  circle (2pt);
    \node at (-0.9,1.4) {1};
    \fill (0.63,1.075)  circle (2pt);
    \node at (0.9,1.4) {2};
    \fill (1.22,0) circle (2pt);
    \node at (1.6,0) {3};
    \fill (0.63,-1.075)  circle (2pt);
    \node at (0.9,-1.4) {4};
    \fill (-0.63,-1.075)  circle (2pt);
    \node at (-0.9,-1.4) {5};
    \fill (-1.22,0) circle (2pt);
    \node at (-1.6,0) {6};
    %partition
    \draw (-1.22,0)--(-0.63,1.075)--(0.63,1.075)--(1.22,0)--(0.63,-1.075)--(-0.63,-1.075)--cycle;
    \draw[dashed] (-1.6,0)[rounded corners=15pt]--(-0.4,1.7)[rounded corners=15pt]--(-0.4,-1.7)[rounded corners=12pt]--cycle;
    \draw[dashed] (1.6,0)[rounded corners=15pt]--(0.4,1.7)[rounded corners=15pt]--(0.4,-1.7)[rounded corners=12pt]--cycle;
    \end{tikzpicture}}}
    \quad\quad
    (\vcenter{\hbox{\begin{tikzpicture}[scale=0.5]
        \draw (0,0)--(0,1);
        \draw (0,1)--(1,2);
        \draw (0,1)--(-1,2);
        \node at (-1.2,2.3) {\small $\{1,5,6\}$};
        \node at (1,2.3) {\small $\{2,3,4\}$};
     \end{tikzpicture}}};
     \vcenter{\hbox{\begin{tikzpicture}[scale=0.5]
        \draw (0,0)--(0,1);
        \draw (0,1)--(1,2);
        \draw (0,1)--(-1,2);
        \draw (-1,2)--(-1.75,2.75);
        \draw (-1,2)--(-0.25,2.75);
        \node at (-1.75,3.1) {$1$};
        \node at (-0.25,3.1) {$6$};
        \node at (1,2.3) {$5$};
     \end{tikzpicture}}},
     \vcenter{\hbox{\begin{tikzpicture}[scale=0.5]
        \draw (0,0)--(0,1);
        \draw (0,1)--(1,2);
        \draw (0,1)--(-1,2);
        \draw (1,2)--(1.75,2.75);
        \draw (1,2)--(0.25,2.75);
        \node at (-1,2.3) {$2$};
        \node at (0.25,3.1) {$3$};
        \node at (1.75,3.1) {$4$};
    \end{tikzpicture}}})
\quad \longrightarrow
    \vcenter{\hbox{\begin{tikzpicture}[scale=0.5 ]
        %tree
        \draw (0,0)--(0,1);
        \draw (0,1)--(1,2);
        \draw (0,1)--(-1,2);
        \draw (1,2)--(1.75,2.75);
        \draw (1,2)--(0.25,2.75);
        \draw (1.75,2.75)--(2.5,3.5);
        \draw (1.75,2.75)--(1,3.5);
        \draw (-1,2)--(-1.75,2.75);
        \draw (-1,2)--(-0.25,2.75);
        \draw (-1.75,2.75)--(-2.5,3.5);
        \draw (-1.75,2.75)--(-1,3.5);
        %leaves
        \node at (-2.5,3.85) {\small$1$};
        \node at (-1,3.85) {\small$6$};
        \node at (-0.25,3.1) {\small$5$};
        \node at (0.25,3.1) {\small$2$};
        \node at (1,3.85) {\small$3$};
        \node at (2.5,3.85) {\small$4$};
        
  \end{tikzpicture}}}
  \]
  \caption{Example of substitution.}
  \label{substitution}
\end{figure}

The grafting of trees allows us to give an explicit construction of free contractads $\T(\E)$. By the "free" as usual we mean, that any contractad morphism $\T(\E)\to \Pop$ is determined by the image of generators, that is $\Hom_{\mathsf{Con}}(\T(\E),\Pop)\cong \Hom_{\GrCol}(\E,\Pop)$.   Informally, for a graphical collection $\E$, the free contractad $\T(\E)$ generated by $\E$, is the contractad with components $\T(\E)(\Gr)$ generated $\Gr$-admissible rooted tree in which internal vertices are decorated by elements from $\E$ and contractad structure is given by substitution of decorated trees. For explicit description, see~\cite[Sec.~1.3]{lyskov2023contractads}.
\begin{example}\label{ex::freecont}
To make some intuition, let us consider several examples of free contractads $\T(\E)$ for particular types of generators $\E$
\begin{itemize}
    \item Let $\E$ be one-dimensional in each component, $\E(\Gr)\cong\mathsf{k}$, then the free contractad $\T(\E)$ is isomorphic to the linearisation of contractads of rooted trees $\Tree$.
    \item A contractad is called \textit{binary} if its generators $\E_2$ are concentrated in the component $\Path_2$. Since $\Aut(\Path_2)\cong \mathbb{Z}_2$, $\E_2$ is just a $\mathbb{Z}_2$-presentation. Each component $\T(\E_2)(\Gr)$ of this contractad is a linear span of $\Gr$-admissible binary\footnote{binary tree is a rooted tree in which internal vertices have two inputs}, in which internal vertices are labelled by elements of $\E_2$.   

    \item Consider the contractad $\BiPlan$ of planar binary trees, whose components $\BiPlan(\Gr)$ consist of $\Gr$-admissible binary trees with additional planar structure. For example, the component $\BiPlan(\Path_3)$ consists of eight different planar trees
    \begin{gather*}
    \vcenter{\hbox{\begin{tikzpicture}[
        scale=0.6,
        vert/.style={circle,  draw=black!30!black, thick, minimum size=1mm},
        leaf/.style={rectangle, thick, minimum size=1mm},
        edge/.style={-,black!30!black, thick},
        ]
        \node[vert] (1) at (0,1) {\footnotesize$\space$};
        \node[leaf] (l1) at (0.75,2) {\footnotesize$3$};
        \node[vert] (2) at (-0.75,2) {\footnotesize$\space$};
        \node[leaf] (l2) at (0,3) {\footnotesize$2$};
        \node[leaf] (3) at (-1.5,3) {\footnotesize$1$};
        \draw[edge] (0,0)--(1);
        \draw[edge] (1)--(2)--(3);
        \draw[edge] (1)--(l1);
        \draw[edge] (2)--(l2);
    \end{tikzpicture}}}\quad
    \vcenter{\hbox{\begin{tikzpicture}[
        scale=0.6,
        vert/.style={circle,  draw=black!30!black, thick, minimum size=1mm},
        leaf/.style={rectangle, thick, minimum size=1mm},
        edge/.style={-,black!30!black, thick},
        ]
        \node[vert] (1) at (0,1) {\footnotesize$\space$};
        \node[leaf] (l1) at (0.75,2) {\footnotesize$3$};
        \node[vert] (2) at (-0.75,2) {\footnotesize$\space$};
        \node[leaf] (l2) at (0,3) {\footnotesize$1$};
        \node[leaf] (3) at (-1.5,3) {\footnotesize$2$};
        \draw[edge] (0,0)--(1);
        \draw[edge] (1)--(2)--(3);
        \draw[edge] (1)--(l1);
        \draw[edge] (2)--(l2);
    \end{tikzpicture}}}\quad
    \vcenter{\hbox{\begin{tikzpicture}[
        scale=0.6,
        vert/.style={circle,  draw=black!30!black, thick, minimum size=1mm},
        leaf/.style={rectangle, thick, minimum size=1mm},
        edge/.style={-,black!30!black, thick},
        ]
        \node[vert] (1) at (0,1) {\footnotesize$\space$};
        \node[leaf] (l1) at (-0.75,2) {\footnotesize$3$};
        \node[vert] (2) at (0.75,2) {\footnotesize$\space$};
        \node[leaf] (l2) at (0,3) {\footnotesize$1$};
        \node[leaf] (3) at (1.5,3) {\footnotesize$2$};
        \draw[edge] (0,0)--(1);
        \draw[edge] (1)--(2)--(3);
        \draw[edge] (1)--(l1);
        \draw[edge] (2)--(l2);
    \end{tikzpicture}}}\quad
    \vcenter{\hbox{\begin{tikzpicture}[
        scale=0.6,
        vert/.style={circle,  draw=black!30!black, thick, minimum size=1mm},
        leaf/.style={rectangle, thick, minimum size=1mm},
        edge/.style={-,black!30!black, thick},
        ]
        \node[vert] (1) at (0,1) {\footnotesize$\space$};
        \node[leaf] (l1) at (-0.75,2) {\footnotesize$3$};
        \node[vert] (2) at (0.75,2) {\footnotesize$\space$};
        \node[leaf] (l2) at (0,3) {\footnotesize$2$};
        \node[leaf] (3) at (1.5,3) {\footnotesize$1$};
        \draw[edge] (0,0)--(1);
        \draw[edge] (1)--(2)--(3);
        \draw[edge] (1)--(l1);
        \draw[edge] (2)--(l2);
    \end{tikzpicture}}}
    \\
    \vcenter{\hbox{\begin{tikzpicture}[
        scale=0.6,
        vert/.style={circle,  draw=black!30!black, thick, minimum size=1mm},
        leaf/.style={rectangle, thick, minimum size=1mm},
        edge/.style={-,black!30!black, thick},
        ]
        \node[vert] (1) at (0,1) {\footnotesize$\space$};
        \node[leaf] (l1) at (-0.75,2) {\footnotesize$1$};
        \node[vert] (2) at (0.75,2) {\footnotesize$\space$};
        \node[leaf] (l2) at (0,3) {\footnotesize$2$};
        \node[leaf] (3) at (1.5,3) {\footnotesize$3$};
        \draw[edge] (0,0)--(1);
        \draw[edge] (1)--(2)--(3);
        \draw[edge] (1)--(l1);
        \draw[edge] (2)--(l2);
    \end{tikzpicture}}}\quad     \vcenter{\hbox{\begin{tikzpicture}[
        scale=0.6,
        vert/.style={circle,  draw=black!30!black, thick, minimum size=1mm},
        leaf/.style={rectangle, thick, minimum size=1mm},
        edge/.style={-,black!30!black, thick},
        ]
        \node[vert] (1) at (0,1) {\footnotesize$\space$};
        \node[leaf] (l1) at (-0.75,2) {\footnotesize$1$};
        \node[vert] (2) at (0.75,2) {\footnotesize$\space$};
        \node[leaf] (l2) at (0,3) {\footnotesize$3$};
        \node[leaf] (3) at (1.5,3) {\footnotesize$2$};
        \draw[edge] (0,0)--(1);
        \draw[edge] (1)--(2)--(3);
        \draw[edge] (1)--(l1);
        \draw[edge] (2)--(l2);
    \end{tikzpicture}}}\quad \vcenter{\hbox{\begin{tikzpicture}[
        scale=0.6,
        vert/.style={circle,  draw=black!30!black, thick, minimum size=1mm},
        leaf/.style={rectangle, thick, minimum size=1mm},
        edge/.style={-,black!30!black, thick},
        ]
        \node[vert] (1) at (0,1) {\footnotesize$\space$};
        \node[leaf] (l1) at (0.75,2) {\footnotesize$1$};
        \node[vert] (2) at (-0.75,2) {\footnotesize$\space$};
        \node[leaf] (l2) at (0,3) {\footnotesize$3$};
        \node[leaf] (3) at (-1.5,3) {\footnotesize$2$};
        \draw[edge] (0,0)--(1);
        \draw[edge] (1)--(2)--(3);
        \draw[edge] (1)--(l1);
        \draw[edge] (2)--(l2);
    \end{tikzpicture}}}\quad
    \vcenter{\hbox{\begin{tikzpicture}[
        scale=0.6,
        vert/.style={circle,  draw=black!30!black, thick, minimum size=1mm},
        leaf/.style={rectangle, thick, minimum size=1mm},
        edge/.style={-,black!30!black, thick},
        ]
        \node[vert] (1) at (0,1) {\footnotesize$\space$};
        \node[leaf] (l1) at (0.75,2) {\footnotesize$1$};
        \node[vert] (2) at (-0.75,2) {\footnotesize$\space$};
        \node[leaf] (l2) at (0,3) {\footnotesize$2$};
        \node[leaf] (3) at (-1.5,3) {\footnotesize$3$};
        \draw[edge] (0,0)--(1);
        \draw[edge] (1)--(2)--(3);
        \draw[edge] (1)--(l1);
        \draw[edge] (2)--(l2);
    \end{tikzpicture}}}
\end{gather*} This contractad is a special case of a binary contractad when $\E_2=\langle \nu,\nu^{(12)}\rangle$ is a regular $\mathbb{Z}_2$-representation. Indeed, the identification $\T(\nu,\nu^{(12)})\cong \BiPlan$ follows from the correspondence  
\[
\T(\nu,\nu^{(12)})\overset{\cong}{\to} \BiPlan, \quad \vcenter{\hbox{\begin{tikzpicture}[
        scale=0.6,
        vert/.style={circle,  draw=black!30!black, thick, minimum size=1mm},
        leaf/.style={rectangle, thick, minimum size=1mm},
        edge/.style={-,black!30!black, thick},
        ]
        \node[vert] (1) at (0,1) {\footnotesize$\nu$};
        \node[leaf] (l1) at (-0.75,2) {\footnotesize$1$};
        \node[leaf] (l2) at (0.75,2) {\footnotesize$2$};
        \draw[edge] (0,0)--(1);
        \draw[edge] (1)--(l1);
        \draw[edge] (1)--(l2);
    \end{tikzpicture}}} \mapsto \vcenter{\hbox{\begin{tikzpicture}[
        scale=0.6,
        vert/.style={circle,  draw=black!30!black, thick, minimum size=1mm},
        leaf/.style={rectangle, thick, minimum size=1mm},
        edge/.style={-,black!30!black, thick},
        ]
        \node[vert] (1) at (0,1) {\footnotesize$\space$};
        \node[leaf] (l1) at (-0.75,2) {\footnotesize$1$};
        \node[leaf] (l2) at (0.75,2) {\footnotesize$2$};
        \draw[edge] (0,0)--(1);
        \draw[edge] (1)--(l1);
        \draw[edge] (1)--(l2);
\end{tikzpicture}}}, \vcenter{\hbox{\begin{tikzpicture}[
        scale=0.6,
        vert/.style={inner sep=1pt, circle,  draw=black!30!black, thick, minimum size=1mm},
        leaf/.style={rectangle, thick, minimum size=1mm},
        edge/.style={-,black!30!black, thick},
        ]
        \node[vert] (1) at (0,1) {\scriptsize$\nu^{\scalebox{0.7}{(12)}}$};
        \node[leaf] (l1) at (-0.75,2) {\footnotesize$1$};
        \node[leaf] (l2) at (0.75,2) {\footnotesize$2$};
        \draw[edge] (0,0)--(1);
        \draw[edge] (1)--(l1);
        \draw[edge] (1)--(l2);
    \end{tikzpicture}}} \mapsto \vcenter{\hbox{\begin{tikzpicture}[
        scale=0.6,
        vert/.style={circle,  draw=black!30!black, thick, minimum size=1mm},
        leaf/.style={rectangle, thick, minimum size=1mm},
        edge/.style={-,black!30!black, thick},
        ]
        \node[vert] (1) at (0,1) {\footnotesize$\space$};
        \node[leaf] (l1) at (-0.75,2) {\footnotesize$2$};
        \node[leaf] (l2) at (0.75,2) {\footnotesize$1$};
        \draw[edge] (0,0)--(1);
        \draw[edge] (1)--(l1);
        \draw[edge] (1)--(l2);
\end{tikzpicture}}}.
\] For example, we have
\[
\nu\circ^{\Path_3}_{\{1,2\}}\nu=    \vcenter{\hbox{\begin{tikzpicture}[
        scale=0.6,
        vert/.style={circle,  draw=black!30!black, thick, minimum size=1mm},
        leaf/.style={rectangle, thick, minimum size=1mm},
        edge/.style={-,black!30!black, thick},
        ]
        \node[vert] (1) at (0,1) {\footnotesize$\space$};
        \node[leaf] (l1) at (0.75,2) {\footnotesize$3$};
        \node[vert] (2) at (-0.75,2) {\footnotesize$\space$};
        \node[leaf] (l2) at (0,3) {\footnotesize$2$};
        \node[leaf] (3) at (-1.5,3) {\footnotesize$1$};
        \draw[edge] (0,0)--(1);
        \draw[edge] (1)--(2)--(3);
        \draw[edge] (1)--(l1);
        \draw[edge] (2)--(l2);
    \end{tikzpicture}}},\quad \nu\circ^{\Path_3}_{\{1,2\}}\nu^{\scalebox{0.7}{(12)}}=    \vcenter{\hbox{\begin{tikzpicture}[
        scale=0.6,
        vert/.style={circle,  draw=black!30!black, thick, minimum size=1mm},
        leaf/.style={rectangle, thick, minimum size=1mm},
        edge/.style={-,black!30!black, thick},
        ]
        \node[vert] (1) at (0,1) {\footnotesize$\space$};
        \node[leaf] (l1) at (0.75,2) {\footnotesize$3$};
        \node[vert] (2) at (-0.75,2) {\footnotesize$\space$};
        \node[leaf] (l2) at (0,3) {\footnotesize$1$};
        \node[leaf] (3) at (-1.5,3) {\footnotesize$2$};
        \draw[edge] (0,0)--(1);
        \draw[edge] (1)--(2)--(3);
        \draw[edge] (1)--(l1);
        \draw[edge] (2)--(l2);
    \end{tikzpicture}}}, \quad \nu^{\scalebox{0.7}{(\{1,2\}3)}}\circ^{\Path_3}_{\{1,2\}}\nu=\vcenter{\hbox{\begin{tikzpicture}[
        scale=0.6,
        vert/.style={circle,  draw=black!30!black, thick, minimum size=1mm},
        leaf/.style={rectangle, thick, minimum size=1mm},
        edge/.style={-,black!30!black, thick},
        ]
        \node[vert] (1) at (0,1) {\footnotesize$\space$};
        \node[leaf] (l1) at (-0.75,2) {\footnotesize$3$};
        \node[vert] (2) at (0.75,2) {\footnotesize$\space$};
        \node[leaf] (l2) at (0,3) {\footnotesize$1$};
        \node[leaf] (3) at (1.5,3) {\footnotesize$2$};
        \draw[edge] (0,0)--(1);
        \draw[edge] (1)--(2)--(3);
        \draw[edge] (1)--(l1);
        \draw[edge] (2)--(l2);
    \end{tikzpicture}}}.
\]
\end{itemize}
\end{example}
We say that a contractad $\Pop$ is presented by generators $\E$ and relations $\R\subset \T(\E)$, if $\Pop$ is the quotient contractad
$\T(\E)/\langle \R \rangle$ where $\langle \R \rangle$ is the minimal ideal containing $\R$. 

\begin{example}\label{ex::presentations} Let us describe presentations of contractads from Section~\ref{sec::examples_of_contractads}. 
\begin{itemize}
\item \cite[Pr.~4.3.2]{lyskov2023contractads} The commutative contractad $\Com$ is generated by a symmetric generator $m$, $m^{(12)}=m$, in the component $\Path_2$, satisfying the relations
\begin{gather*}
m \circ_{\{1,2\}}^{\mathsf{K_3}} m - m \circ_{\{2,3\}}^{\mathsf{K_3}} m
\\
m \circ_{\{1,2\}}^{\mathsf{P_3}} m - m \circ_{\{2,3\}}^{\mathsf{P_3}} m
\end{gather*} Note that we do not explicitly mention the relations produced by graph automorphisms. For example, the action of the transposition $(12)$ on the second relation results in the following relation:
\[
 (m \circ_{\{1,2\}}^{\mathsf{K_3}} m - m \circ_{\{2,3\}}^{\mathsf{K_3}} m)^{(12)}=m \circ_{\{1,2\}}^{\mathsf{K_3}} m - m \circ_{\{1,3\}}^{\mathsf{K_3}} m.
\] In terms of (non-planar) binary trees, both relations are written in the following way  
\[
    \vcenter{\hbox{\begin{tikzpicture}[
        scale=0.6,
        vert/.style={circle,  draw=black!30!black, thick, minimum size=1mm, inner sep=2pt},
        leaf/.style={rectangle, thick, minimum size=1mm},
        edge/.style={-,black!30!black, thick},
        ]
        \node[vert] (1) at (0,1) {\footnotesize$m$};
        \node[leaf] (l1) at (0.75,2) {\footnotesize$3$};
        \node[vert] (2) at (-0.75,2) {\footnotesize$m$};
        \node[leaf] (l2) at (0,3) {\footnotesize$2$};
        \node[leaf] (3) at (-1.5,3) {\footnotesize$1$};
        \draw[edge] (0,0)--(1);
        \draw[edge] (1)--(2)--(3);
        \draw[edge] (1)--(l1);
        \draw[edge] (2)--(l2);
    \end{tikzpicture}}}=    \vcenter{\hbox{\begin{tikzpicture}[
        scale=0.6,
        vert/.style={circle,  draw=black!30!black, thick, minimum size=1mm, inner sep=2pt},
        leaf/.style={rectangle, thick, minimum size=1mm},
        edge/.style={-,black!30!black, thick},
        ]
        \node[vert] (1) at (0,1) {\footnotesize$m$};
        \node[leaf] (l1) at (-0.75,2) {\footnotesize$1$};
        \node[vert] (2) at (0.75,2) {\footnotesize$m$};
        \node[leaf] (l2) at (0,3) {\footnotesize$2$};
        \node[leaf] (3) at (1.5,3) {\footnotesize$3$};
        \draw[edge] (0,0)--(1);
        \draw[edge] (1)--(2)--(3);
        \draw[edge] (1)--(l1);
        \draw[edge] (2)--(l2);
    \end{tikzpicture}}}
\]

\item \cite[Th.~5.1.1]{lyskov2023contractads} The Associative contractad  $\Ass$ is the contractad with a generator $\nu$ in the component $\Path_2$, satisfying the relations
\begin{gather*}
    \nu\circ^{\Path_3}_{\{1,2\}}\nu=\nu\circ^{\Path_3}_{\{2,3\}}\nu,
    \\
    \nu^{(12)}\circ^{\Path_3}_{\{1,2\}}\nu=\nu \circ^{\Path_3}_{\{2,3\}}\nu^{(12)},
    \\
    \nu\circ^{\Path_3}_{\{1,2\}}\nu^{(12)}=\nu^{(12)} \circ^{\Path_3}_{\{2,3\}}\nu,
    \\
    \nu\circ^{\K_3}_{\{1,2\}}\nu=\nu\circ^{\K_3}_{\{2,3\}}\nu.
\end{gather*} Let me note that, the generator $\nu$ is not symmetric, $\nu\neq\nu^{(12)}$.

\item \cite[Pr.~2.4.1]{lyskov2023contractads} The rooted spanning trees contractad $\RST$ is generated by a generator $\mu$ in the component $\Path_2$, satisfying the relations
 \begin{gather*}
     \mu \circ^{\Path_3}_{\{1,2\}} \mu= \mu \circ^{\Path_3}_{\{2,3\}} \mu
     \\
     \mu \circ^{\Path_3}_{\{1,2\}} \mu^{(12)} = \mu^{(12)} \circ^{\Path_3}_{\{2,3\}} \mu
     \\
     \mu^{(12)} \circ^{\Path_3}_{\{1,2\}} \mu=0
     \\
     \mu \circ^{\K_3}_{\{1,2\}} \mu- \mu \circ^{\K_3}_{\{2,3\}} \mu= (\mu \circ^{\K_3}_{\{1,2\}} \mu- \mu \circ^{\K_3}_{\{2,3\}} \mu)^{(23)}
 \end{gather*}
\end{itemize}
\end{example}
Let us state the main result of this section.
\begin{theorem}\label{thm::hampres}
 The Hamiltonian contractad $\Ham$ is generated by binary generators $\nu=\vcenter{\hbox{\begin{tikzpicture}[
        scale=0.4,
        vert/.style={circle,  draw=black!30!black, thick, minimum size=1mm, inner sep=2pt},
        leaf/.style={rectangle, thick, minimum size=1mm, inner sep=2pt},
        edge/.style={-,black!30!black, thick},
        ]
        \node[vert] (1) at (0,1) {\scriptsize$\space$};
        \node[leaf] (l1) at (-0.75,2) {\scriptsize$1$};
        \node[leaf] (l2) at (0.75,2) {\scriptsize$2$};
        \draw[edge] (0,0.2)--(1);
        \draw[edge] (1)--(l1);
        \draw[edge] (1)--(l2);
\end{tikzpicture}}}$ and its opposite $\nu^{(12)}=\vcenter{\hbox{\begin{tikzpicture}[
        scale=0.4,
        vert/.style={circle,  draw=black!30!black, thick, minimum size=1mm, inner sep=2pt},
        leaf/.style={rectangle, thick, minimum size=1mm, inner sep=2pt},
        edge/.style={-,black!30!black, thick},
        ]
        \node[vert] (1) at (0,1) {\scriptsize$\space$};
        \node[leaf] (l1) at (-0.75,2) {\scriptsize$2$};
        \node[leaf] (l2) at (0.75,2) {\scriptsize$1$};
        \draw[edge] (0,0.2)--(1);
        \draw[edge] (1)--(l1);
        \draw[edge] (1)--(l2);
\end{tikzpicture}}}$ satisfying the relations: for a graph $\Gr$ on $3$ vertices, we have
\begin{gather}
\label{eq::ham_assrel}\vcenter{\hbox{\begin{tikzpicture}[
        scale=0.6,
        vert/.style={circle,  draw=black!30!black, thick, minimum size=1mm},
        leaf/.style={rectangle, thick, minimum size=1mm},
        edge/.style={-,black!30!black, thick},
        ]
        \node[vert] (1) at (0,1) {\footnotesize$\space$};
        \node[leaf] (l1) at (0.75,2) {\footnotesize$v_3$};
        \node[vert] (2) at (-0.75,2) {\footnotesize$\space$};
        \node[leaf] (l2) at (0,3) {\footnotesize$v_2$};
        \node[leaf] (3) at (-1.5,3) {\footnotesize$v_1$};
        \draw[edge] (0,0)--(1);
        \draw[edge] (1)--(2)--(3);
        \draw[edge] (1)--(l1);
        \draw[edge] (2)--(l2);
\end{tikzpicture}}}=\vcenter{\hbox{\begin{tikzpicture}[
        scale=0.6,
        vert/.style={circle,  draw=black!30!black, thick, minimum size=1mm},
        leaf/.style={rectangle, thick, minimum size=1mm},
        edge/.style={-,black!30!black, thick},
        ]
        \node[vert] (1) at (0,1) {\footnotesize$\space$};
        \node[leaf] (l1) at (-0.75,2) {\footnotesize$v_1$};
        \node[vert] (2) at (0.75,2) {\footnotesize$\space$};
        \node[leaf] (l2) at (0,3) {\footnotesize$v_2$};
        \node[leaf] (3) at (1.5,3) {\footnotesize$v_3$};
        \draw[edge] (0,0)--(1);
        \draw[edge] (1)--(2)--(3);
        \draw[edge] (1)--(l1);
        \draw[edge] (2)--(l2);
    \end{tikzpicture}}}, \text{ if } (v_1,v_2),(v_2,v_3)\in E_{\Gr}
    \\
    \label{eq::ham_monrel1}\vcenter{\hbox{\begin{tikzpicture}[
        scale=0.6,
        vert/.style={circle,  draw=black!30!black, thick, minimum size=1mm},
        leaf/.style={rectangle, thick, minimum size=1mm},
        edge/.style={-,black!30!black, thick},
        ]
        \node[vert] (1) at (0,1) {\footnotesize$\space$};
        \node[leaf] (l1) at (0.75,2) {\footnotesize$v_3$};
        \node[vert] (2) at (-0.75,2) {\footnotesize$\space$};
        \node[leaf] (l2) at (0,3) {\footnotesize$v_2$};
        \node[leaf] (3) at (-1.5,3) {\footnotesize$v_1$};
        \draw[edge] (0,0)--(1);
        \draw[edge] (1)--(2)--(3);
        \draw[edge] (1)--(l1);
        \draw[edge] (2)--(l2);
\end{tikzpicture}}}=0, \text{ if } (v_1,v_2)\in E_{\Gr}\text{, but }(v_2,v_3)\not\in E_{\Gr}
\\
\label{eq::ham_monrel2}\vcenter{\hbox{\begin{tikzpicture}[
        scale=0.6,
        vert/.style={circle,  draw=black!30!black, thick, minimum size=1mm},
        leaf/.style={rectangle, thick, minimum size=1mm},
        edge/.style={-,black!30!black, thick},
        ]
        \node[vert] (1) at (0,1) {\footnotesize$\space$};
        \node[leaf] (l1) at (-0.75,2) {\footnotesize$v_1$};
        \node[vert] (2) at (0.75,2) {\footnotesize$\space$};
        \node[leaf] (l2) at (0,3) {\footnotesize$v_2$};
        \node[leaf] (3) at (1.5,3) {\footnotesize$v_3$};
        \draw[edge] (0,0)--(1);
        \draw[edge] (1)--(2)--(3);
        \draw[edge] (1)--(l1);
        \draw[edge] (2)--(l2);
    \end{tikzpicture}}}=0,\text{ if } (v_2,v_3)\in E_{\Gr}\text{, but }(v_1,v_2)\not\in E_{\Gr}
\end{gather}
\end{theorem}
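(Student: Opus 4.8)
The plan is to identify the free contractad on the proposed generators with the contractad $\BiPlan$ of planar binary trees, to exhibit an evaluation surjection onto $\Ham$, and to show that its kernel is generated by \eqref{eq::ham_assrel}--\eqref{eq::ham_monrel2} by means of a normal-form (rewriting) argument. First I would pin down the generators: the component $\Ham(\Path_2)$ is spanned by the two directed Hamiltonian paths $(1,2)$ and $(2,1)$ of the edge $\Path_2$, so it is the regular $\mathbb{Z}_2$-representation $\langle\nu,\nu^{(12)}\rangle$ with $\nu=(1,2)$. By Example~\ref{ex::freecont} the free contractad on this space is $\BiPlan=\T(\nu,\nu^{(12)})$, where the left-to-right leaf order of a $\Gr$-admissible planar tree records a directed sequence of vertices. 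I would then define the evaluation morphism $\phi\colon\BiPlan\to\Ham$ sending a tree with leaf order $(w_1,\dots,w_n)$ to the directed path $(w_1,\dots,w_n)$ when every consecutive pair is an edge of $\Gr$, and to $0$ otherwise; that $\phi$ is a contractad morphism follows from the substitution formula for $\Ham$, since concatenation of directed paths is precisely the grafting of the corresponding combs.

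Surjectivity is immediate, as each directed Hamiltonian path $(w_1,\dots,w_n)$ is the $\phi$-image of its fully right-associated comb. To see that $\phi$ annihilates the relations I would evaluate both sides in $\Ham$: for a graph on three vertices the two trees in \eqref{eq::ham_assrel} both map to the path $(v_1,v_2,v_3)$ (associativity of concatenation), while in \eqref{eq::ham_monrel1} and \eqref{eq::ham_monrel2} a required endpoint adjacency is absent, so the image is $0$. Hence $\phi$ factors as $\BiPlan\twoheadrightarrow\Pop\overset{\bar\phi}{\twoheadrightarrow}\Ham$, where $\Pop:=\BiPlan/\langle\R\rangle$ is the candidate presentation, and it remains to prove that $\bar\phi$ is injective.

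The heart of the argument is the reverse inequality $\dim\Pop(\Gr)\le\dim\Ham(\Gr)$. For each directed Hamiltonian path $(w_1,\dots,w_n)$ let $N(w_1,\dots,w_n)$ denote its right-comb monomial; since $\bar\phi$ carries these to the distinct basis vectors of $\Ham(\Gr)$, they are linearly independent in $\Pop(\Gr)$, so it suffices to prove that they span. I would establish this by reducing an arbitrary $\Gr$-admissible planar tree modulo the relations, by induction on the number of leaves. Splitting at the root into subtrees with yields $A,B$, the inductive hypothesis rewrites each factor as a combination of right combs on $A$ and on $B$, and one is left to reduce a configuration ``right comb on $(a_1,\dots,a_p)$ grafted with right comb on $(b_1,\dots,b_q)$''. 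Here I would peel the vertices $a_1,a_2,\dots$ off the left factor one at a time using \eqref{eq::ham_assrel}: the $i$-th peeling is a legitimate associativity move precisely when the remaining left tube $\{a_{i+1},\dots,a_p\}$ is adjacent to $B$, and when that adjacency fails the same step is instead an instance of \eqref{eq::ham_monrel1}, so the term vanishes. After all admissible peelings one reaches the single junction $(a_p,b_1)$: if $a_p\sim b_1$ the tree has become the concatenated right comb $N(a_1,\dots,a_p,b_1,\dots,b_q)$, and otherwise it matches \eqref{eq::ham_monrel2} and vanishes. A right comb that contains a non-adjacent consecutive pair is likewise killed by one application of \eqref{eq::ham_monrel2} at its deepest defect. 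This shows the normal forms span, forcing $\bar\phi$ to be an isomorphism.

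The step I expect to be the main obstacle is exactly this reduction. Read literally, \eqref{eq::ham_monrel1} and \eqref{eq::ham_monrel2} detect only the non-adjacency of individual vertices, whereas an admissible tree may join two tubes that are adjacent yet meet at a non-adjacent pair of path endpoints, so one cannot simply apply a monomial relation at the root. The function of the associativity relation \eqref{eq::ham_assrel} is precisely to transport such a tree into a shape where the offending endpoints appear as a single-vertex pattern governed by \eqref{eq::ham_monrel1} or \eqref{eq::ham_monrel2}. Verifying that the peeling procedure is always either a valid associativity move or a vanishing relation, and that it terminates---equivalently, that \eqref{eq::ham_assrel}--\eqref{eq::ham_monrel2} form a quadratic Gr\"obner basis with respect to a path-lexicographic order whose normal monomials are exactly the right combs on directed Hamiltonian paths---is the technical crux demanding the most care.
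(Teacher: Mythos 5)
Your proposal is correct and takes essentially the same route as the paper: verify the relations hold in $\Ham$ to obtain a surjection $\Pop=\T(\nu,\nu^{(12)})/\langle\R\rangle\twoheadrightarrow\Ham$, show that comb monomials labelled by directed Hamiltonian paths span $\Pop$ by rewriting with \eqref{eq::ham_assrel} and killing defective trees via \eqref{eq::ham_monrel1}--\eqref{eq::ham_monrel2}, and conclude by the dimension/bijection count. Your peeling procedure (including the connectivity observation that failure of adjacency between the remaining left tube and $B$ forces an instance of \eqref{eq::ham_monrel1}) merely fills in the reduction step the paper asserts without detail, and your right-comb normal form is just the mirror image of the paper's left-comb form \eqref{fig:prenorm}.
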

\begin{proof}
Let $\nu:=\hbox{\begin{tikzpicture}[scale=0.4, edge/.style={->,> = latex, thick}]
    \fill (0,0) circle (3pt);
    \fill (1.5,0) circle (3pt);
    \node at (0,0.45) {\scriptsize$1$};
    \node at (1.5,0.45) {\scriptsize$2$};
    \draw[edge] (0,0)--(1.5,0);   \end{tikzpicture}}$ be a Hamiltonian path of $\Path_2$. In particular, we have $\nu^{(12)}=\hbox{\begin{tikzpicture}[scale=0.4, edge/.style={->,> = latex, thick}]
    \fill (0,0) circle (3pt);
    \fill (1.5,0) circle (3pt);
    \node at (0,0.45) {\scriptsize$1$};
    \node at (1.5,0.45) {\scriptsize$2$};
    \draw[edge] (1.5,0)--(0,0);   \end{tikzpicture}}$. Let us verify the relations. For simplicity, we shall use the infinitesimal notations. For the path $\Path_3$, we have
\begin{gather*}
    \nu\circ^{\Path_3}_{\{1,2\}}\nu=\begin{tikzpicture}[scale=0.5, edge/.style={->,> = latex, thick}]
    \fill (0,0) circle (3pt);
    \fill (1.5,0) circle (3pt);
    \node at (0,0.45) {\scriptsize$\{1,2\}$};
    \node at (1.5,0.45) {\scriptsize$3$};
    \draw[edge] (0,0)--(1.5,0);   \end{tikzpicture}\circ^{\Path_3}_{\{1,2\}}\hbox{\begin{tikzpicture}[scale=0.5, edge/.style={->,> = latex, thick}]
    \fill (0,0) circle (3pt);
    \fill (1.5,0) circle (3pt);
    \node at (0,0.45) {\scriptsize$1$};
    \node at (1.5,0.45) {\scriptsize$2$};
    \draw[edge] (0,0)--(1.5,0);   \end{tikzpicture}}=\hbox{\begin{tikzpicture}[scale=0.5, edge/.style={->,> = latex, thick}]
    \fill (0,0) circle (3pt);
    \fill (1.5,0) circle (3pt);
    \fill (3,0) circle (3pt);
    \node at (0,0.45) {\scriptsize$1$};
    \node at (1.5,0.45) {\scriptsize$2$};
    \node at (3,0.45) {\scriptsize$3$};
    \draw[edge] (0,0)--(1.5,0);
    \draw[edge] (1.5,0)--(3,0);\end{tikzpicture}}=\hbox{\begin{tikzpicture}[scale=0.5, edge/.style={->,> = latex, thick}]
    \fill (0,0) circle (3pt);
    \fill (1.5,0) circle (3pt);
    \node at (0,0.45) {\scriptsize$1$};
    \node at (1.5,0.45) {\scriptsize$\{2,3\}$};
    \draw[edge] (0,0)--(1.5,0);   \end{tikzpicture}}\circ^{\Path_3}_{\{2,3\}}\hbox{\begin{tikzpicture}[scale=0.5, edge/.style={->,> = latex, thick}]
    \fill (0,0) circle (3pt);
    \fill (1.5,0) circle (3pt);
    \node at (0,0.45) {\scriptsize$2$};
    \node at (1.5,0.45) {\scriptsize$3$};
    \draw[edge] (0,0)--(1.5,0);   \end{tikzpicture}}=\nu\circ^{\Path_3}_{\{2,3\}}\nu,
    \\
    \nu\circ^{\Path_3}_{\{1,2\}}\nu^{(12)}=\hbox{\begin{tikzpicture}[scale=0.5, edge/.style={->,> = latex, thick}]
    \fill (0,0) circle (3pt);
    \fill (1.5,0) circle (3pt);
    \node at (0,0.45) {\scriptsize$\{1,2\}$};
    \node at (1.5,0.45) {\scriptsize$3$};
    \draw[edge] (0,0)--(1.5,0);   \end{tikzpicture}}\circ^{\Path_3}_{\{1,2\}}\hbox{\begin{tikzpicture}[scale=0.5, edge/.style={->,> = latex, thick}]
    \fill (0,0) circle (3pt);
    \fill (1.5,0) circle (3pt);
    \node at (0,0.45) {\scriptsize$1$};
    \node at (1.5,0.45) {\scriptsize$2$};
    \draw[edge] (1.5,0)--(0,0);   \end{tikzpicture}}=0,\text{ since vertices } 1,3\text{ are not adjacent},
    \\
    \nu^{(12)}\circ^{\Path_3}_{\{1,2\}}\nu=\hbox{\begin{tikzpicture}[scale=0.5, edge/.style={->,> = latex, thick}]
    \fill (0,0) circle (3pt);
    \fill (1.5,0) circle (3pt);
    \node at (0,0.45) {\scriptsize$\{1,2\}$};
    \node at (1.5,0.45) {\scriptsize$3$};
    \draw[edge] (1.5,0)--(0,0);   \end{tikzpicture}}\circ^{\Path_3}_{\{1,2\}}\hbox{\begin{tikzpicture}[scale=0.5, edge/.style={->,> = latex, thick}]
    \fill (0,0) circle (3pt);
    \fill (1.5,0) circle (3pt);
    \node at (0,0.45) {\scriptsize$1$};
    \node at (1.5,0.45) {\scriptsize$2$};
    \draw[edge] (0,0)--(1.5,0);   \end{tikzpicture}}=0,\text{ since vertices } 1,3\text{ are not adjacent}.
\end{gather*} For the complete graph $\K_3$, we have
\[
\nu\circ^{\K_3}_{\{1,2\}}\nu=\hbox{\begin{tikzpicture}[scale=0.5, edge/.style={->,> = latex, thick}]
    \fill (0,0) circle (3pt);
    \fill (1.5,0) circle (3pt);
    \node at (0,0.45) {\scriptsize$\{1,2\}$};
    \node at (1.5,0.45) {\scriptsize$3$};
    \draw[edge] (0,0)--(1.5,0);   \end{tikzpicture}}\circ^{\K_3}_{\{1,2\}}\hbox{\begin{tikzpicture}[scale=0.5, edge/.style={->,> = latex, thick}]
    \fill (0,0) circle (3pt);
    \fill (1.5,0) circle (3pt);
    \node at (0,0.45) {\scriptsize$1$};
    \node at (1.5,0.45) {\scriptsize$2$};
    \draw[edge] (0,0)--(1.5,0);   \end{tikzpicture}}=\vcenter{\hbox{\begin{tikzpicture}[scale=0.5, edge/.style={->,> = latex, thick}]
    \fill (0,0) circle (3pt);
    \fill (2,0) circle (3pt);
    \fill (1,1.73) circle (3pt);
    \node at (-0.3,-0.25) {\scriptsize$1$};
    \node at (2.3,-0.25) {\scriptsize$3$};
    \node at (1,2.2) {\scriptsize$2$};
    \draw[dashed] (0,0)--(1.5,0); 
    \draw[edge] (0,0)--(1,1.73);
    \draw[edge] (1,1.73)--(2,0);
\end{tikzpicture}}}=\hbox{\begin{tikzpicture}[scale=0.5, edge/.style={->,> = latex, thick}]
    \fill (0,0) circle (3pt);
    \fill (1.5,0) circle (3pt);
    \node at (0,0.45) {\scriptsize$1$};
    \node at (1.5,0.45) {\scriptsize$\{2,3\}$};
    \draw[edge] (0,0)--(1.5,0);   \end{tikzpicture}}\circ^{\K_3}_{\{2,3\}}\hbox{\begin{tikzpicture}[scale=0.5, edge/.style={->,> = latex, thick}]
    \fill (0,0) circle (3pt);
    \fill (1.5,0) circle (3pt);
    \node at (0,0.45) {\scriptsize$2$};
    \node at (1.5,0.45) {\scriptsize$3$};
    \draw[edge] (0,0)--(1.5,0);   \end{tikzpicture}}=\nu\circ^{\K_3}_{\{2,3\}}\nu.
\] So, we obtain a well-defined morphism of contractads $\pi\colon\Pop\to \Ham$, where $\Pop:=\T(\nu,\nu^{(12)})/\langle \R\rangle$ is the contractad obtained from generator $\nu$ and relations above. Note that $\pi$ is surjective since contractad $\Ham$ is generated in component $\Path_2$. Also, for a graph $\Gr$, each planar binary $\Gr$-admissible tree $T$ is zero modulo relations~\eqref{eq::ham_monrel1} or \eqref{eq::ham_monrel2} or can be rewritten using the identity~\eqref{eq::ham_assrel} in the form
\begin{equation}\label{fig:prenorm}
\vcenter{\hbox{\begin{tikzpicture}[
        scale=0.7,
        vert/.style={inner sep=3pt, circle,draw, thick},
        leaf/.style={inner sep=2pt, rectangle, thick},
        edge/.style={-,black!30!black, thick},
        ]
        \node[leaf] (l1) at (-1.5,3) {\footnotesize$v_1$};
        \node[leaf] (l2) at (0,3) {\footnotesize$v_2$};
        \node[leaf] (l3) at (0.75,2) {\footnotesize$v_3$};
        \node[leaf] (lk) at (2.25,0) {\footnotesize$v_k$};
        \node[leaf] (lk+1) at (3,-1) {\footnotesize$v_{k+1}$};
        \node[leaf] (lk+2) at (3.75,-2) {\footnotesize$v_{k+2}$};
        \node[leaf] (ln) at (5.25,-4) {\footnotesize$v_{n}$};
        %vertices
        \node[vert] (top) at (-0.75,2) {\space};
        \node[vert] (v3) at (0,1) {\space};
        \node[circle, draw=white] (dots) at (0.75,0) {\footnotesize$\cdots$};
        \node[vert] (vk) at (1.5,-1) {\space};
        \node[vert] (vk+1) at (2.25,-2) {\space};
        \node[vert] (vk+2) at (3,-3) {\space};
        \node[circle, draw=white] (dots2) at (3.75,-4){\footnotesize$\cdots$};
        \node[vert] (bot) at (4.5,-5) {\space};
        %edges
        \draw[edge] (4.5,-6)--(bot)--(dots2)--(vk+2)--(vk+1)--(vk)--(dots)--(v3)--(top)--(l1);
        \draw[edge] (top)--(l2);
        \draw[edge] (v3)--(l3);
        \draw[edge] (vk)--(lk);
        \draw[edge] (vk+1)--(lk+1);
        \draw[edge] (vk+2)--(lk+2);
        \draw[edge] (ln)--(bot);
    \end{tikzpicture}}}    
\end{equation} with an additional condition on the labeling of leaves: for each $i$, vertices $v_i$, $v_{i+1}$ are adjacent. So, the monomials of the form~\eqref{fig:prenorm} spans each component of $\Pop$. Moreover, the image of such monomial in $\Ham$ defines the Hamiltonian path $(v_1, v_2,\cdots ,v_n)$. Hence we have an one-to-one correspondence between spanning monomials~\eqref{fig:prenorm} and Hamiltonian paths, that by dimension reasons implies the isomorphism $\Pop\cong \Ham$. 
\end{proof}
As one of the consequences, we conclude the following simple upper bound on the number of Hamiltonian paths
\begin{sled}\label{cor::upperbound_paths}
For a graph $\Gr$, we have
\[
\HP(\Gr)\leq (-1)^{|V_{\Gr}|}\chi_{\Gr}(-1),
\] where $\HP(\Gr)$ is the number of oriented Hamiltonian paths, $(-1)^{|V_{\Gr}|}\chi_{\Gr}(-1)$ is the number of acyclic directions of $\Gr$. Moreover, the equality holds if and only if graph $\Gr$ is complete.
\end{sled}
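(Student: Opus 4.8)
The plan is to exhibit $\Ham$ as a quotient of the linearised associative contractad $\mathsf{k}[\Ass]$, and then read off the bound from the identity $\dim\mathsf{k}[\Ass](\Gr)=|\Ass(\Gr)|=(-1)^{|V_{\Gr}|}\chi_{\Gr}(-1)$ recalled for acyclic directions. Both $\Ham$ (Theorem~\ref{thm::hampres}) and $\Ass$ (Example~\ref{ex::presentations}) are binary contractads generated by $\nu$ and $\nu^{(12)}$ in the component $\Path_2$, so it suffices to check that every defining relation of $\Ass$ already holds in $\Ham$. The two associativity relations of $\Ass$, on $\Path_3$ and on $\K_3$, are instances of~\eqref{eq::ham_assrel}: in each of these graphs the consecutive pairs $(v_1,v_2)$ and $(v_2,v_3)$ that occur are genuine edges. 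The two remaining \enquote{mixed} relations of $\Ass$ on $\Path_3$ collapse to the trivial identity $0=0$ in $\Ham$, since every term is a composite whose connecting pair is the non-adjacent pair $\{1,3\}$ and hence vanishes by~\eqref{eq::ham_monrel1} or~\eqref{eq::ham_monrel2}. Thus the identity-on-generators assignment descends to a morphism of contractads $\mathsf{k}[\Ass]\to\Ham$, which is surjective because both contractads are generated in $\Path_2$ and the map is the identity there; this yields $\HP(\Gr)=\dim\Ham(\Gr)\le\dim\mathsf{k}[\Ass](\Gr)=(-1)^{|V_{\Gr}|}\chi_{\Gr}(-1)$.

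For the characterisation of equality it is cleanest to work with an explicit combinatorial model of this surjection. I would consider the map $\Phi$ sending a directed Hamiltonian path $(v_1,\dots,v_n)$ to the acyclic direction induced by the total order $v_1<\dots<v_n$, orienting each edge from its smaller to its larger endpoint. This $\Phi$ is injective: the vertices $v_1,\dots,v_n$ form a maximal chain whose consecutive edges all point forward, so the resulting direction recovers the order $<$, hence the path. A direction $D$ lies in the image of $\Phi$ exactly when $D$ contains a directed Hamiltonian path, equivalently when $D$ admits a unique topological sort. If $\Gr=\K_n$ every acyclic direction is a transitive tournament and so possesses a directed Hamiltonian path; thus $\Phi$ is a bijection and $\HP(\K_n)=n!=|\Ass(\K_n)|$, which is the equality case.

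Conversely, suppose $\Gr$ is not complete and fix non-adjacent vertices $u,v$. I would build an acyclic direction having $u$ and $v$ as two distinct sinks: orient every edge incident to $u$ towards $u$ and every edge incident to $v$ towards $v$ (these prescriptions do not clash, as $u$ and $v$ are non-adjacent), and orient all remaining edges by an arbitrary total order on $V_{\Gr}\setminus\{u,v\}$. The result is acyclic and has at least two sinks, whereas a directed Hamiltonian path has a unique sink, its terminal vertex; hence this direction lies outside the image of $\Phi$, and the inequality is strict. This shows that equality holds precisely for complete graphs.

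The step I expect to require the most care is the first one: one must confirm that the relations quoted for $\Ass$ in Example~\ref{ex::presentations} form a complete set, and then match each of them, together with the relations generated by the $\Aut(\Gr)$-action, against the correct tree identity among~\eqref{eq::ham_assrel}–\eqref{eq::ham_monrel2} — precisely the bookkeeping already performed for $\Ham$ in the proof of Theorem~\ref{thm::hampres}. Once the surjection is in place, the equality analysis is elementary graph theory, the only mild subtlety being the identification of the image of $\Phi$ with the acyclic directions that carry a directed Hamiltonian path.
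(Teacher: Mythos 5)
Your proposal is correct, and it splits naturally into a half that matches the paper and a half that does not. The inequality is obtained exactly as in the paper: you verify that every quadratic relation of $\Ass$ holds in $\Ham$ (the two associativity relations are instances of~\eqref{eq::ham_assrel}, the two mixed relations degenerate to $0=0$ via~\eqref{eq::ham_monrel1} and~\eqref{eq::ham_monrel2}), which gives the surjection $\Ass\twoheadrightarrow\Ham$ and hence $\HP(\Gr)\leq|\Ass(\Gr)|=(-1)^{|V_{\Gr}|}\chi_{\Gr}(-1)$. For the equality case you genuinely diverge. The paper stays inside the presentation formalism: it notes that $\R_{\Ass}(\Path_3)\subsetneq\R_{\Ham}(\Path_3)$ is a proper inclusion while $\R_{\Ass}(\K_3)=\R_{\Ham}(\K_3)$, and concludes that the surjection is an isomorphism precisely when $\Path_3$ cannot be obtained from $\Gr$ by induced subgraphs and contractions, i.e.\ precisely when $\Gr$ is complete. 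You instead argue combinatorially with the map $\Phi$ sending a directed Hamiltonian path to the acyclic orientation induced by its vertex order; your three claims -- injectivity (the path is the unique topological sort of its image), the identification of the image with orientations containing a directed Hamiltonian path, and non-surjectivity for incomplete graphs via an acyclic orientation with two sinks $u,v$ -- are all valid. Your route is more elementary and in fact self-sufficient: injectivity of $\Phi$ alone already proves the inequality, so your first paragraph is logically redundant and the whole corollary follows from graph theory with no dependence on Theorem~\ref{thm::hampres} or on the completeness of the $\Ass$ presentation cited from earlier work. What the paper's relation-space argument buys is that it never leaves the quadratic-presentation framework, which is the structural point of the corollary (the surjection $\Ass\twoheadrightarrow\Ham$ itself); what yours buys is a transparent, unconditional certificate for strictness of the inequality on incomplete graphs.
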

\begin{proof}
Thanks to Theorem~\ref{thm::hampres} and Example~\ref{ex::presentations}, we have the inclusion of quadratic relations $\R_{\Ass}\subset \R_{\Ham}$, so there is a surjective morphism of contractads
\[
\Ass\twoheadrightarrow \Ham.
\] For $\Path_3$, the inclusion $\R_{\Ass}(\Path_3)\subsetneq \R_{\Ham}(\Path_3)$ is proper, while, for $\K_3$, we have $\R_{\Ass}(\K_3)=\R_{\Ham}(\K_3)$. So, the map $\Ass(\Gr)\twoheadrightarrow\Ham(\Gr)$ is isomorphism if and only if $\Path_3$ can not be obtained from $\Gr$ using operations of induced and contracted graphs, that is $\Gr$ is complete.
\end{proof}
\subsection{Planar equivalent trees}\label{sec::planartrees} In this subsection, we define and describe the contractad $\Plan$ of planar-equivalent binary trees. The motivation of this contractad follows from the notion of Koszul duality described in Section~\ref{sec::koszul}. Also, in Section~\ref{sec::separable_permutations} we explore the relations between components of this contractad with pattern-avoidance permutations.

For a planar binary tree $T$ with $n$ leaves, let $\iota_T\colon [n]\to \Leav(T)$ be the ordering of leaves induced from the planar structure. We say that two planar binary trees $T,T'$ are planar equivalent if the corresponding orderings coincide $\iota_T=\iota_{T'}$. Consider the contractad of planar binary graph-admissible trees $\BiPlan$ from Example~\ref{ex::freecont}. Note that the substitutions of planar binary trees are compatible with the planar equivalence relation. So, the graphical collection of equivalence classes $\Plan$, with components $\Plan(\Gr):=\BiPlan(\Gr)/\sim_{\mathrm{planar}}$, admits the contractad structure given by substitutions of trees. In particular, we have the surjective morphism of contractads
\[
\mathsf{BiPlanar}\twoheadrightarrow \Plan.
\]

There is an alternative description of this contractad. An equivalence class of a $\Gr$-admissible planar rooted tree $(T,\iota_T)$ is uniquely determined by an ordering $\iota_T$, or equivalently the tuple $(v_1,v_2,\cdots,v_n)$, where vertices are arranged concerning planar structure. So, for a graph $\Gr$ on $n$ vertices, we have a map $\Plan(\Gr)\to\mathsf{Bij}([n],V_{\Gr})$.  Note that this map is always injective, but not necessarily surjective.
\begin{example}\label{ex::separable_paths}
For the path $\Path_n$, a binary $\Path_n$-admissible planar tree is a binary planar tree $T$ with leaves $(v_1,\cdots,v_n)$ arranged in that order, such that for each internal edge, the leave of subtree $T_e$ rooted at $e$ are $v_i,v_{i+1},\cdots,v_{i+l}$, then this set form a subrange of $[n]$, i.e,  $\{v_i,v_{i+1},\cdots,v_{i+l}\}=\{l,l+1,\cdots,l+k\}$ for some $l$. Such trees are called \textit{separating trees} and were introduced in~\cite{bose1998pattern}. They showed that a permutation $(v_1,\cdots,v_n)$ is realised by a separating tree if and only if it does not contain sub-patterns $(3,1,4,2)$ and $(2,4,1,3)$. We reprove this result in Section~\ref{sec::separable_permutations}. In particular, the map $\Plan(\Path_n)\to \Sigma_n$ is not surjective for $n\geq 4$.
\end{example} In general, there is an inductive procedure to determine when a tuple comes from planar tree: a $n$-tuple $(v_1,v_2,\cdots,v_n)$ is realised by a planar $\Gr$-admissible binary tree if and only if there is an index $i$, such that vertices $v_i,v_{i+1}$ are adjacent and the tuple $(v_1,\cdots,v_{i-1},\{v_i,v_{i+1}\},\cdots,v_n)$ is realised by a planar structure of $\Gr/\{v_{i},v_{i+1}\}$. In terms of tuples, the contractad structure is given by substitution of tuples. Let me record a useful technical lemma.
\begin{lemma}\label{lemma:edges_in_tuples}
Let $\sigma=(v_1,v_2,\cdots,v_n)$ be a tuple realised by a $\Gr$-admissible planar binary tree. If $(v_i,v_{i+1})\in E_{\Gr}$, then $\sigma_i=(v_1,\cdots,\{v_i,v_{i+1}\},\cdots, v_n)$ is realised by a $(\Gr/\{v_i,v_{i+1}\})$-admissible planar binary tree..
\end{lemma}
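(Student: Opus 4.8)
The plan is to reduce the statement to the case in which the two consecutive leaves are siblings, and then contract the resulting cherry. Write $a=v_i$ and $b=v_{i+1}$ and let $T$ be a $\Gr$-admissible planar binary tree realising $\sigma$, so that its leaves read $v_1,\dots,v_n$ in planar order. First I would produce a second $\Gr$-admissible planar binary tree $\widetilde T$, with the \emph{same} planar leaf order $v_1,\dots,v_n$, in which $a$ and $b$ share a common parent. Once this is available, I would collapse the cherry $\{a,b\}$ to a single leaf labelled $*=\{a,b\}$ to obtain a planar binary tree $T'$ whose leaves read $\sigma_i$; the claim is then that $T'$ is $(\Gr/\{a,b\})$-admissible.

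Granting $\widetilde T$, I expect the collapse step to be routine. Every internal edge of $T'$ descends from an internal edge of $\widetilde T$, and since $a,b$ are siblings in $\widetilde T$ the leaf set $S$ of the corresponding subtree either contains both of $a,b$ or neither. If it contains neither, then $S$ is a tube of $\Gr$ disjoint from $\{a,b\}$ and hence a tube of $\Gr/\{a,b\}$. If it contains both, then $S$ is a tube of $\Gr$ containing the edge $\{a,b\}$, and since contracting an edge of a connected graph keeps it connected, the set $S'=(S\setminus\{a,b\})\cup\{*\}$ is a tube of $\Gr/\{a,b\}$; here one checks the identification $(\Gr|_S)/\{a,b\}=(\Gr/\{a,b\})|_{S'}$. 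As leaf edges give trivial tubes, this shows $T'$ is $(\Gr/\{a,b\})$-admissible and realises $\sigma_i$.

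The heart of the argument, and the step I expect to be the main obstacle, is the construction of $\widetilde T$. I would set $u=\mathrm{lca}(a,b)$ in $T$; by planarity $a$ is the rightmost leaf of the left subtree $T_{u_L}$ and $b$ the leftmost leaf of the right subtree $T_{u_R}$. The idea is to decompose $T_{u_L}$ along its right spine from $u_L$ down to $a$, recording the left-hanging subtrees $A_0,\dots,A_{p-1}$, and $T_{u_R}$ along its left spine from $u_R$ down to $b$, recording the right-hanging subtrees $B_0,\dots,B_{q-1}$, so that the leaves of $T_u$ read $A_0,\dots,A_{p-1},a,b,B_{q-1},\dots,B_0$. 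Admissibility of $T$ then records, for every $j$, that $A_j\cup\dots\cup A_{p-1}\cup\{a\}$ and $\{b\}\cup B_{q-1}\cup\dots\cup B_j$ are tubes of $\Gr$. I would rebuild $T_u$ into a caterpillar $\widetilde T_u$ around the cherry $(a\,b)$: starting from the cherry, whose leaf set $\{a,b\}$ is a tube precisely because $(a,b)\in E_{\Gr}$, I graft $A_{p-1},\dots,A_0$ onto the left in turn and then $B_{q-1},\dots,B_0$ onto the right in turn, preserving the planar order. The crucial point to verify is that every subtree leaf set created in this caterpillar is again a tube; by construction each such set is the union of two previously known tubes sharing the vertex $a$ or $b$ — for instance $\big(A_j\cup\dots\cup A_{p-1}\cup\{a\}\big)\cup\{a,b\}$ and $T_{u_L}\cup\big(\{b\}\cup B_{q-1}\cup\dots\cup B_j\big)$ — and the union of two tubes with a common vertex is again a tube. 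Replacing $T_u$ by $\widetilde T_u$ inside $T$ leaves the leaf sets at all ancestors of $u$ unchanged, so $\widetilde T$ is $\Gr$-admissible, realises $\sigma$, and has $a,b$ as a cherry, exactly as the reduction requires. I would stress that the hypothesis $(v_i,v_{i+1})\in E_{\Gr}$ enters at a single essential point: it is what makes the seed cherry $\{a,b\}$ a tube, and without it the reconstruction cannot begin.
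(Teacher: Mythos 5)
Your proof is correct, but it takes a genuinely different route from the paper's. The paper argues by induction on the number of vertices using only the recursive characterisation of $\Plan(\Gr)$: realisability of $\sigma$ supplies \emph{some} witness index $j$ with $(v_j,v_{j+1})\in E_{\Gr}$ and $\sigma_j$ realised; the inductive hypothesis, applied in the smaller contracted graph, together with the contractad composition axioms (contracting the two pairs in either order when $j$ and $i$ are disjoint, and passing through the contraction of the triple $\{v_i,v_{i+1},v_{i+2}\}$ in the overlapping case $j=i\pm1$) converts the witness $j$ into the desired witness $i$. You instead work directly with the tree: you rotate $T$ into a planar-order-preserving $\Gr$-admissible tree $\widetilde T$ in which $v_i,v_{i+1}$ form a cherry (decomposing along the two spines joining the leaves to their least common ancestor and reassembling as a caterpillar around the cherry), and then contract the cherry. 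Your argument buys more: it is self-contained at the level of tubes and trees, and it proves the stronger, constructive statement that any realised tuple with $(v_i,v_{i+1})\in E_{\Gr}$ is realised by a tree in which this pair is a sibling pair; the paper's argument is shorter and leans on the operadic formalism already in place, which is the style reused later (e.g.\ in the proof of Theorem~\ref{thm::planpres}). One small repair in your caterpillar step: for the $B$-side graftings, the two tubes in your displayed decomposition $T_{u_L}\cup\bigl(\{b\}\cup B_{q-1}\cup\cdots\cup B_j\bigr)$ are disjoint, so ``sharing the vertex $a$ or $b$'' does not literally apply there; either decompose instead as $\bigl(L\cup\{a,b\}\bigr)\cup\bigl(\{b\}\cup B_{q-1}\cup\cdots\cup B_j\bigr)$, where $L$ is the leaf set of $T_{u_L}$ and the two tubes share $b$, or note that the union of two disjoint tubes joined by the edge $(a,b)$ is again a tube. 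Either fix is immediate and uses exactly the hypothesis you flagged as essential.
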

\begin{proof} The proof is done by induction on the number of vertices in $\Gr$. The base of induction is obvious. Let $\sigma=(v_1,\cdots,v_n)\in \Plan(\Gr)$ and $i$ an index with $(v_i,v_{i+1})\in E_{\Gr}$. By the construction, we have an index $j$ such that $(v_j,v_{j+1})\in E_{\Gr}$ and $\sigma_j=(v_1,\cdots,\{v_j,v_{j+1}\},\cdots, v_n)\in \Plan(\Gr/\{v_j,v_{j+1}\})$. Suppose $j\neq i$. If $j>i+1$, then by induction step, we have \[\sigma_{ij}=(\cdots,\{v_i,v_{i+1}\},\cdots,\{v_j,v_{j+1}\},\cdots)\in \Plan(\Gr/\{\{v_i,v_{i+1}\},\{v_j,v_{j+1}\}\}\}),\] hence $\sigma_i=\sigma_{ij}\circ^{\Gr/\{\{v_i,v_{i+1}\},\{v_j,v_{j+1}\}\}}_{\{v_i,v_{i+1}\}} (v_i,v_{i+1})$. The case $i>j+1$ is done in a similarly. If $j=i+1$, then by induction step we have
\[
\sigma_{i,i+1}=(v_1,\cdots,\{v_i,v_{i+1},v_{i+2}\},\cdots,v_n)\in \Plan(\Gr/\{v_i,v_{i+1},v_{i+2}\}),
\] hence $\sigma_i=\sigma_{i,i+1}\circ^{\Gr/\{v_i,v_{i+1}\}}_{\{v_i,\{v_{i},v_{i+1}\}\}} (\{v_i,v_{i+1}\},v_{i+2})$. The case $j+1=i$ is done similarly.
\end{proof} 

Let us state the presentation of contractad $\Plan$.
\begin{theorem}\label{thm::planpres}
 The contractad of planar-equivalent trees $\Plan$ is generated by binary generators $\mu=\vcenter{\hbox{\begin{tikzpicture}[
        scale=0.4,
        vert/.style={circle,  draw=black!30!black, thick, minimum size=1mm, inner sep=2pt},
        leaf/.style={rectangle, thick, minimum size=1mm, inner sep=2pt},
        edge/.style={-,black!30!black, thick},
        ]
        \node[vert] (1) at (0,1) {\scriptsize$\space$};
        \node[leaf] (l1) at (-0.75,2) {\scriptsize$1$};
        \node[leaf] (l2) at (0.75,2) {\scriptsize$2$};
        \draw[edge] (0,0.2)--(1);
        \draw[edge] (1)--(l1);
        \draw[edge] (1)--(l2);
\end{tikzpicture}}}$ and its opposite $\mu^{(12)}=\vcenter{\hbox{\begin{tikzpicture}[
        scale=0.4,
        vert/.style={circle,  draw=black!30!black, thick, minimum size=1mm, inner sep=2pt},
        leaf/.style={rectangle, thick, minimum size=1mm, inner sep=2pt},
        edge/.style={-,black!30!black, thick},
        ]
        \node[vert] (1) at (0,1) {\scriptsize$\space$};
        \node[leaf] (l1) at (-0.75,2) {\scriptsize$2$};
        \node[leaf] (l2) at (0.75,2) {\scriptsize$1$};
        \draw[edge] (0,0.2)--(1);
        \draw[edge] (1)--(l1);
        \draw[edge] (1)--(l2);
\end{tikzpicture}}}$ satisfying the relations: for a graph $\Gr$ on $3$ vertices, we have
\begin{equation}\label{eq::planeq_rel}
\vcenter{\hbox{\begin{tikzpicture}[
        scale=0.6,
        vert/.style={circle,  draw=black!30!black, thick, minimum size=1mm},
        leaf/.style={rectangle, thick, minimum size=1mm},
        edge/.style={-,black!30!black, thick},
        ]
        \node[vert] (1) at (0,1) {\footnotesize$\space$};
        \node[leaf] (l1) at (0.75,2) {\footnotesize$v_3$};
        \node[vert] (2) at (-0.75,2) {\footnotesize$\space$};
        \node[leaf] (l2) at (0,3) {\footnotesize$v_2$};
        \node[leaf] (3) at (-1.5,3) {\footnotesize$v_1$};
        \draw[edge] (0,0)--(1);
        \draw[edge] (1)--(2)--(3);
        \draw[edge] (1)--(l1);
        \draw[edge] (2)--(l2);
\end{tikzpicture}}}=\vcenter{\hbox{\begin{tikzpicture}[
        scale=0.6,
        vert/.style={circle,  draw=black!30!black, thick, minimum size=1mm},
        leaf/.style={rectangle, thick, minimum size=1mm},
        edge/.style={-,black!30!black, thick},
        ]
        \node[vert] (1) at (0,1) {\footnotesize$\space$};
        \node[leaf] (l1) at (-0.75,2) {\footnotesize$v_1$};
        \node[vert] (2) at (0.75,2) {\footnotesize$\space$};
        \node[leaf] (l2) at (0,3) {\footnotesize$v_2$};
        \node[leaf] (3) at (1.5,3) {\footnotesize$v_3$};
        \draw[edge] (0,0)--(1);
        \draw[edge] (1)--(2)--(3);
        \draw[edge] (1)--(l1);
        \draw[edge] (2)--(l2);
    \end{tikzpicture}}}, \text{ if } (v_1,v_2),(v_2,v_3)\in E_{\Gr}
\end{equation}
\end{theorem}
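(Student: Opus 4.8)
The plan is to follow the strategy of Theorem~\ref{thm::hampres}, with the crucial difference that $\Plan$ is subject only to the associativity-type relation~\eqref{eq::planeq_rel} and to no ``vanishing'' relations. First I would recall from the third bullet of Example~\ref{ex::freecont} that the free contractad $\T(\mu,\mu^{(12)})$ on a regular $\mathbb{Z}_2$-generator in the component $\Path_2$ is isomorphic to $\BiPlan$, so that $\Pop:=\T(\mu,\mu^{(12)})/\langle\R\rangle$ is literally the quotient of the graphical collection of planar binary $\Gr$-admissible trees by the ideal generated by the single rotation move~\eqref{eq::planeq_rel}. The routine first step is to check that this relation holds in $\Plan$: when $(v_1,v_2),(v_2,v_3)\in E_\Gr$, both combs in~\eqref{eq::planeq_rel} are $\Gr$-admissible and induce the same leaf-ordering $(v_1,v_2,v_3)$, hence are planar equivalent. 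This yields a morphism $\pi\colon\Pop\to\Plan$, which is surjective because $\Plan$ is generated in the component $\Path_2$.

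For injectivity I would argue by a dimension count. Since $\pi$ sends the class of an admissible tree to its leaf-ordering, and since the injection $\Plan(\Gr)\hookrightarrow\mathsf{Bij}([n],V_\Gr)$ identifies $\dim\Plan(\Gr)$ with the number of \emph{realisable} orderings, it suffices to fix, for each realisable ordering $\sigma$, a single admissible tree $C_\sigma$ realising it, and to show that \emph{every} admissible tree realising $\sigma$ is congruent to $C_\sigma$ modulo $\langle\R\rangle$. This gives $\dim\Pop(\Gr)\le\#\{\text{realisable orderings}\}=\dim\Plan(\Gr)$, which together with surjectivity forces $\pi$ to be an isomorphism. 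In contrast with the Hamiltonian case, one cannot take $C_\sigma$ to be a fixed global comb: the initial segments of a realisable ordering need not be tubes (for instance $(1,2,3)$ in the path $1-3-2$), so no single comb shape is $\Gr$-admissible for all $\sigma$, and the $\Ham$-style normal form is unavailable.

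The heart of the proof is therefore a \emph{connectivity} statement: any two $\Gr$-admissible planar binary trees with the same leaf-ordering $\sigma$ are related by a chain of $\Gr$-admissible instances of the rotation~\eqref{eq::planeq_rel}. I would prove this by induction on $|V_\Gr|$. By the realisability criterion (Proposition~\ref{prop::sigma_graphs} and the inductive description preceding it) there is an index $i$ with $(v_i,v_{i+1})\in E_\Gr$ such that the contracted tuple $\sigma_i$ is realisable in $\Gr/\{v_i,v_{i+1}\}$. The key local move is to show that any admissible tree realising $\sigma$ can be rotated, remaining $\Gr$-admissible at every intermediate step, into a tree in which the planar-consecutive leaves $v_i,v_{i+1}$ share a parent; such a tree has the form $\widetilde T\circ^{\Gr}_{\{v_i,v_{i+1}\}}(v_i,v_{i+1})$ with $\widetilde T$ an admissible tree for $\sigma_i$, and one concludes by applying the induction hypothesis to $\widetilde T$ over $\Gr/\{v_i,v_{i+1}\}$.

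The main obstacle will be this last ``sibling-isation'' move, namely guaranteeing that bringing $v_i$ and $v_{i+1}$ to a common parent can be carried out through $\Gr$-admissible rotations only, i.e. that every intermediate bracketing remains a tube. This is exactly the phenomenon controlled by Lemma~\ref{lemma:edges_in_tuples}: its proof, which expresses $\sigma_i$ as an explicit composition obtained by sliding the edge $\{v_i,v_{i+1}\}$ past the neighbouring blocks, supplies precisely the admissible rotation sequence needed here. I expect the bulk of the work to consist in lifting that contraction argument from the level of realisable tuples in $\Plan$ to the level of honest congruences modulo~\eqref{eq::planeq_rel} in $\Pop$, and in handling the case analysis (the position of the block where two consecutive leaves meet) in the same spirit as the proof of Lemma~\ref{lemma:edges_in_tuples}.
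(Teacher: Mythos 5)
Your strategy is sound and its technical core coincides with the paper's, but the global packaging is genuinely different, so a comparison is worthwhile. The paper never argues by dimension count: it constructs an explicit section $\iota\colon \Plan\to\Pop$, defined recursively by $\iota(\sigma)=\iota(\sigma_l)\circ^{\Gr}_{\{v_l,v_{l+1}\}}\iota((v_l,v_{l+1}))$ for a chosen adjacent consecutive pair, and then proves (i) independence of the choice of $l$, split into the disjoint case $k>l+1$ (handled by the contractad axioms, i.e.\ commuting disjoint compositions) and the overlapping case $k=l+1$ (handled by the quadratic relation~\eqref{eq::planeq_rel}); (ii) equivariance of $\iota$ under graph automorphisms; and (iii) that $\pi$ and $\iota$ are mutually inverse because both composites fix the generator $\mu$. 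Your connectivity-plus-dimension-count argument replaces (ii) and (iii) by the observation that all relations are differences of tree monomials, so $\dim\Pop(\Gr)$ equals the number of rotation-equivalence classes of admissible trees; that is a real economy, since no morphism or equivariance check is needed once $\pi$ is surjective. Conversely, the well-definedness computation (i) is exactly your ``lifting of Lemma~\ref{lemma:edges_in_tuples} to congruences,'' with the same two-case analysis, so the heart of the work is identical. One caution on your sibling-isation step: it should not be attempted by local rotations along the tree path joining $v_i$ to $v_{i+1}$, since intermediate bracketings need not be tubes and admissibility can fail; instead, decompose the given tree at a sibling pair $(v_j,v_{j+1})$ it already possesses (such a pair exists and is automatically an edge of $\Gr$), and connect $j$ to $i$ through the induction hypothesis on the contracted graph, using the disjoint/overlapping dichotomy above --- which is precisely the paper's computation.
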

\begin{proof} By the construction, we have the surjective morphism of contractads $\BiPlan\twoheadrightarrow \Plan$ that sends a planar tree to its planar structure. Note that the relation~\eqref{eq::planeq_rel} remains the orderings of leaves, hence the projection factors through $\BiPlan\twoheadrightarrow \Pop\twoheadrightarrow \Plan$, where $\Pop$ is the quadratic contractad obtained from the generators and relations above. We define the section $\iota\colon \Plan\to \Pop$ inductively as follows. For $\Path_2$, we set $\iota=\pi^{-1}\colon \Plan(\Path_2)\to \Pop(\Path_2)$. Next, given an element $\sigma=(v_1,v_2,\cdots,v_n)\in \Plan(\Gr)$, Thanks to Lemma~\ref{lemma:edges_in_tuples}, for an index $l$ with $(v_l,v_{l+1})\in E_{\Gr}$ we have $\sigma_l:=(v_1,\cdots,v_{i-1},\{v_l,v_{l+1}\},\cdots,v_n)\in \Plan(\Gr/\{v_l,v_{l+1}\})$. So, we put 
  \[
\iota(\sigma):=\iota(\sigma_l)\circ^{\Gr}_{\{v_l,v_{l+1}\}} \iota((v_l,v_{l+1})).
  \]
Let us prove by induction on the number of vertices that this correspondence is well-defined. Suppose we have another index $k$ with $(v_k,v_{k+1})$. Without loss of generality, we can assume $k>l$. If $k>l+1$, Thanks to Lemma~\ref{lemma:edges_in_tuples}, we have
\begin{multline*}
    \iota(\sigma_l)\circ^{\Gr}_{\{v_l,v_{l+1}\}} \iota((v_l,v_{l+1}))=[\iota(\sigma_{lk})\circ^{\Gr/\{v_l,v_{l+1}\}}_{\{v_k,v_{k+1}\}}\iota((v_k,v_{k+1}))]\circ^{\Gr}_{\{v_l,v_{l+1}\}} \iota((v_l,v_{l+1})),
\end{multline*} where $\sigma_{lk}=(v_1,\cdots,\{v_l,v_{l+1}\},\cdots\{v_k,v_{k+1}\},v_n)$. By induction assumption, the element $\iota(\sigma_{lk})$ is well-defined. Thanks to contractad axioms, we can change the order of contractad compositions, hence the right hand side is rewritten as follows
\begin{multline*}
[\iota(\sigma_{lk})\circ^{\Gr/\{v_l,v_{l+1}\}}_{\{v_k,v_{k+1}\}}\iota((v_k,v_{k+1}))]\circ^{\Gr}_{\{v_l,v_{l+1}\}} \iota((v_l,v_{l+1}))=\\=[\iota(\sigma_{lk})\circ^{\Gr/\{v_k,v_{k+1}\}}_{\{v_l,v_{l+1}\}}\iota((v_l,v_{l+1}))]\circ^{\Gr}_{\{v_k,v_{k+1}\}} \iota((v_k,v_{k+1}))=\iota(\sigma_k)\circ^{\Gr}_{\{v_k,v_{k+1}\}} \iota((v_k,v_{k+1})),    
\end{multline*} When $k=l+1$, by similar computations, we have
\begin{multline*}
\iota(\sigma_l)\circ^{\Gr}_{\{v_l,v_{l+1}\}} \iota((v_l,v_{l+1}))=[\iota(\sigma_{l,l+1})\circ^{\Gr/\{v_l,v_{l+1}\}}_{\{\{v_l,v_{l+1}\},v_{l+2}\}}\iota((\{v_l,v_{l+1}\},v_{l+2}))]\circ^{\Gr}_{\{v_l,v_{l+1}\}} \iota((v_l,v_{l+1}))=\\=\iota(\sigma_{l,l+1})\circ^{\Gr}_{\{v_l,v_{l+1},v_{l+2}\}}[\iota((\{v_l,v_{l+1}\},v_{l+2}))\circ^{\Gr|_{\{v_l,v_{l+1},v_{l+2}\}}}_{\{v_l,v_{l+1}\}} \iota((v_l,v_{l+1}))],
\end{multline*} where $\sigma_{l,l+1}=(v_1,\cdots,\{v_l,v_{l+1},v_{l+2}\},\cdots,v_n)$. By induction assumption, this element is well-defined. Thanks to quadratic relations in $\Pop$, the right hand side is rewritten as follows
\begin{multline*}
\iota(\sigma_{l,l+1})\circ^{\Gr}_{\{v_l,v_{l+1},v_{l+2}\}}[\iota((\{v_l,v_{l+1}\},v_{l+2}))\circ^{\Gr|_{\{v_l,v_{l+1},v_{l+2}\}}}_{\{v_l,v_{l+1}\}} \iota((v_l,v_{l+1}))]=\\=\iota(\sigma_{l,l+1})\circ^{\Gr}_{\{v_l,v_{l+1},v_{l+2}\}}[\iota((v_l,\{v_{l+1},v_{l+2}\}))\circ^{\Gr|_{\{v_l,v_{l+1},v_{l+2}\}}}_{\{v_{l+1},v_{l+2}\}} \iota((v_{l+1},v_{l+2}))]=\\=[\iota(\sigma_{l,l+1})\circ^{\Gr/\{v_l,v_{l+1}\}}_{\{v_l,\{v_{l+1},v_{l+2}\}\}}\iota((v_l,\{v_{l+1},v_{l+2}\}))]\circ^{\Gr}_{\{v_{l+1},v_{l+2}\}} \iota((v_{l+1},v_{l+2}))=\\=\iota(\sigma_{l+1})\circ^{\Gr}_{\{v_{l+1},v_{l+2}\}} \iota((v_{l+1},v_{l+2})).
\end{multline*} By computations above, we see that the map $\iota$ is well-defined. Similarly, let us show, by induction on the number of vertices, that $\iota$ is compatible with graph automorphisms:
\begin{multline*}
\iota((v_1,v_2,\cdots,v_l,v_{l+1},\cdots,v_n)^{\tau})=\iota((\tau(v_1),\tau(v_l),\tau(v_{l+1}),\cdots,\tau(v_n)))=\\=\iota((\tau(v_1),\cdots,\tau(\{v_l,v_{l+1}\}),\cdots,\tau(v_n)))\circ^{\Gr}_{\tau(\{v_l,v_{l+1}\})} \iota((\tau(v_l),\tau(v_{l+1})))=\\=\iota((v_1,\cdots,\{v_l,v_{l+1}\},\cdots,v_n))^{\tau}\circ^{\Gr}_{\tau(\{v_l,v_{l+1}\})} \iota((v_l,v_{l+1}))^{\tau}=\\=(\iota((v_1,\cdots,\{v_l,v_{l+1}\},\cdots,v_n))\circ^{\Gr}_{\{v_l,v_{l+1}\}} \iota((v_l,v_{l+1})))^{\tau}=\iota((v_1,v_2,\cdots,v_l,v_{l+1},\cdots,v_n))^{\tau}.
\end{multline*} So, $\iota$ defines a morphism of graphical collections. Although, by the construction, this correspondence is a morphism of contractads. Since compositions $\iota\circ \pi$ and $\pi\circ\iota$ are identity on the generator  $\mu$, we conclude that  $\pi$ and $\iota$ are inverse to each other. Hence $\Pop\cong \Plan$.
\end{proof}
In a dual fashion to Corollary~\ref{cor::upperbound_paths}, we have the following lower bound
\begin{sled}
For a graph $\Gr$, we have
\[
  \PE(\Gr)\geq (-1)^{|V_{\Gr}|}\chi_{\Gr}(-1).
\] where $\PE(\Gr)$ is the number of tuples $(v_1,\cdots,v_n)$ realised by $\Gr$-admissible planar binary trees. Moreover, the equality holds if and only if graph $\Gr$ is complete
\end{sled}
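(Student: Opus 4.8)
The plan is to dualise the proof of Corollary~\ref{cor::upperbound_paths}. There, the inclusion $\R_{\Ass}\subset\R_{\Ham}$ of quadratic relations produced a surjection $\Ass\twoheadrightarrow\Ham$ and hence the bound $\HP(\Gr)\le(-1)^{|V_{\Gr}|}\chi_{\Gr}(-1)=\dim\Ass(\Gr)$. Here I would instead compare the presentations of $\Plan$ (Theorem~\ref{thm::planpres}) and $\Ass$ (Example~\ref{ex::presentations}) to establish the reverse inclusion $\R_{\Plan}\subset\R_{\Ass}$. This yields a surjective morphism of contractads $\Plan\twoheadrightarrow\Ass$, and therefore $\PE(\Gr)=\dim\Plan(\Gr)\ge\dim\Ass(\Gr)=(-1)^{|V_{\Gr}|}\chi_{\Gr}(-1)$, which is exactly the claimed inequality.

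First I would identify the single planar-associativity relation~\eqref{eq::planeq_rel} inside $\R_{\Ass}$. In infinitesimal notation the generating relation of $\Plan$ reads $\nu\circ^{\Gr}_{\{v_1,v_2\}}\nu=\nu\circ^{\Gr}_{\{v_2,v_3\}}\nu$ whenever $(v_1,v_2),(v_2,v_3)\in E_{\Gr}$. For $\Gr=\Path_3$ with $(v_1,v_2,v_3)=(1,2,3)$ this is exactly the first relation of $\Ass$, and for $\Gr=\K_3$ it is the fourth; every other admissible labelling is obtained from these by a graph automorphism, and $\R_{\Ass}$ is closed under automorphisms, so $\R_{\Plan}\subseteq\R_{\Ass}$. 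Along the way I would record the two facts needed for the equality clause: at $\Path_3$ the inclusion is proper, since $\Ass$ additionally imposes the relations involving $\nu^{(12)}$ (e.g.\ $\nu^{(12)}\circ^{\Path_3}_{\{1,2\}}\nu=\nu\circ^{\Path_3}_{\{2,3\}}\nu^{(12)}$), which equate monomials of distinct planar order and hence do not lie in $\R_{\Plan}$; whereas at $\K_3$ a dimension count ($\dim\Plan(\K_3)=\dim\Ass(\K_3)=6$, the first by $\K_3$ being a $\sigma$-graph via Corollary~\ref{lem::conevert}) forces $\R_{\Plan}(\K_3)=\R_{\Ass}(\K_3)$.

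Finally, for the equality statement I would argue exactly as in Corollary~\ref{cor::upperbound_paths}. The kernel of $\Plan\twoheadrightarrow\Ass$ is the contractad ideal generated by the extra relations, which live in the $\Path_3$-component. Hence $\Plan(\Gr)\to\Ass(\Gr)$ is an isomorphism if and only if $\Path_3$ cannot be produced from $\Gr$ by the operations of taking induced subgraphs and contractions. Since a connected graph is complete precisely when it has no induced $\Path_3$ (a connected $\Path_3$-free graph is a single clique), this occurs if and only if $\Gr$ is complete, so equality holds exactly for complete graphs. The main obstacle is this last step: one must check that the extra $\Path_3$-relation genuinely contributes a nonzero class to $\ker(\Plan(\Gr)\to\Ass(\Gr))$ for every non-complete $\Gr$, rather than being annihilated further up the contractad. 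This is the content of the propagation argument underlying Corollary~\ref{cor::upperbound_paths}, and it is where I would spend the most care.
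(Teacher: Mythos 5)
Your proposal is correct and is essentially the paper's own argument: the paper justifies this corollary only by the phrase ``in a dual fashion to Corollary~\ref{cor::upperbound_paths}'', which unpacks to exactly what you do --- identify the generators of $\Plan$ and $\Ass$, verify the relation inclusion $\R_{\Plan}\subset\R_{\Ass}$ via Theorem~\ref{thm::planpres} and Example~\ref{ex::presentations} (proper at $\Path_3$, equality at $\K_3$), obtain a surjection $\Plan\twoheadrightarrow\Ass$ giving $\PE(\Gr)\geq(-1)^{|V_{\Gr}|}\chi_{\Gr}(-1)$, and characterize equality by whether $\Path_3$ can be obtained from $\Gr$ by induced subgraphs and contractions, i.e., by completeness of $\Gr$. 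The strictness caveat you flag for non-complete graphs is present to exactly the same degree in the paper's own proof of Corollary~\ref{cor::upperbound_paths}, so it is not a divergence from the paper's argument.
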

\section{Right modules over contractads and Hamiltonian cycles}\label{sec::modules}
In this section, we develop a theory of right modules over contractads. In particular, we show that the graphical collection of Hamiltonian cycles forms a right module over the contractad of Hamiltonian paths. Also, we construct the right modules of cyclic equivalent trees and ( cyclic ) permutations over the contractad of planar equivalent binary trees.
\subsection{Right modules over contractads} In this subsection, we provide the necessary theory of right modules over contractads.
In a natural, we define right modules over contractads as follows. 
\begin{defi}
A right module over contractad $\Pop$ is a graphical collection $\M$ endowed with a product map
\[
\gamma_{\M}\colon \M\circ\Pop\to \M
\] that is compatible with the contractad product map $\gamma_{\Pop}$.
\end{defi} Similarly to contractads, the right modules are determined by the collection of infinitesimal compositions $\circ^{\Gr}_G\colon \M(\Gr/G)\otimes\Pop(\Gr|_G)\to \M(\Gr)$. Let $\mathrm{Mod}_{\Pop}$ be the category of right $\Pop$-modules. Since the contraction product is additive on the left part, $(\Orb\oplus \Q)\circ \Pop\cong (\Orb\circ\Pop)\oplus (\Q\circ\Pop)$, the product of two $\M\oplus\N$ of two right $\Pop$-modules is also the right $\Pop$-modules. It is a good exercise in category theory for a reader to prove the following proposition.

\begin{prop}
For a contractad $\Pop$, the category $\mathrm{Mod}_{\Pop}$ of right $\Pop$-modules forms an Abelian category.
\end{prop}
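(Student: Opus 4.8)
The plan is to exhibit the forgetful functor $U\colon \mathrm{Mod}_{\Pop}\to\GrCol$ as one that \emph{creates} all the structure witnessing abelianness, so that the abelian structure of the base category transports to $\mathrm{Mod}_{\Pop}$. The starting observation is that $\GrCol=\mathrm{Fun}(\mathsf{CGr}^{\mathrm{op}},\mathsf{Vect})$ is abelian, with kernels, cokernels, and finite biproducts all computed componentwise, since $\mathsf{Vect}$ is abelian and limits and colimits in a functor category into an abelian category are pointwise. The single technical input I would isolate at the outset is that, for fixed $\Pop$, the endofunctor $-\circ\Pop$ of $\GrCol$ is additive and \emph{exact}. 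This is immediate from the defining formula $(\M\circ\Pop)(\Gr)=\bigoplus_{I\vdash\Gr}\M(\Gr/I)\otimes\bigotimes_{G\in I}\Pop(\Gr|_G)$: as a functor of $\M$ it is a finite direct sum of functors $\M\mapsto\M(\Gr/I)\otimes W_I$ with $W_I=\bigotimes_{G\in I}\Pop(\Gr|_G)$ a fixed vector space, and tensoring with a fixed space over a field, together with finite direct sums, is exact.

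Next I would record that $\mathrm{Mod}_{\Pop}$ is preadditive (a sum or scalar multiple of module morphisms is again one, since structure maps are linear and $-\circ\Pop$ is additive) and has a zero object, namely the zero graphical collection with its unique structure map. For biproducts, additivity of $-\circ\Pop$ yields a canonical identification $(\M\oplus\N)\circ\Pop\cong(\M\circ\Pop)\oplus(\N\circ\Pop)$, under which $\gamma_\M\oplus\gamma_\N$ is a structure map making $\M\oplus\N$ the biproduct, so $U$ creates biproducts. For a module morphism $f\colon\M\to\N$, let $\iota\colon K\hookrightarrow\M$ be the kernel in $\GrCol$. I would equip $K$ with the unique factorization of $\gamma_\M\circ(\iota\circ\Pop)\colon K\circ\Pop\to\M$ through $K$; this exists because $f\circ\gamma_\M\circ(\iota\circ\Pop)=\gamma_\N\circ\big((f\iota)\circ\Pop\big)=0$, using that $f$ is a module map and $f\iota=0$. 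The module axioms for $K$ descend from those for $\M$ by faithfulness of $U$, and the universal property of $\ker$ upgrades verbatim, so $U$ creates kernels. This step uses only functoriality and additivity, not exactness.

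The crux — and where exactness is genuinely needed — is the construction of cokernels. Let $\pi\colon\N\twoheadrightarrow C$ be the cokernel of $f$ in $\GrCol$. Exactness of $-\circ\Pop$ guarantees that $\pi\circ\Pop\colon\N\circ\Pop\to C\circ\Pop$ is again an epimorphism and is precisely the cokernel of $f\circ\Pop$. Consequently $\pi\circ\gamma_\N\colon\N\circ\Pop\to C$ annihilates $\operatorname{im}(f\circ\Pop)$, since $\pi\circ\gamma_\N\circ(f\circ\Pop)=\pi\circ f\circ\gamma_\M=0$, and therefore descends uniquely along the epimorphism $\pi\circ\Pop$ to a structure map $\gamma_C\colon C\circ\Pop\to C$. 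Without right-exactness there is no guarantee that $\pi\circ\Pop$ is surjective or that its kernel equals $\operatorname{im}(f\circ\Pop)$, so the descent could fail — this is the step I would flag as the main obstacle. Over the field $\mathsf{k}$ it evaporates because vector spaces are flat, which is exactly what makes $-\circ\Pop$ exact. Again the module axioms and universal property transfer through the faithful $U$, so $U$ creates cokernels.

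Finally I would assemble these into abelianness. A direct check shows $U$ is conservative: if $U(f)$ is invertible in $\GrCol$, its inverse commutes with the structure maps (conjugate $f\circ\gamma_\M=\gamma_\N\circ(f\circ\Pop)$ by $U(f)^{-1}$ and use $U(f)^{-1}\circ\Pop=(U(f)\circ\Pop)^{-1}$), so $f$ is an isomorphism of modules. For any module morphism $f$, form $\operatorname{coim} f=\operatorname{coker}(\ker f\to\M)$ and $\operatorname{im} f=\ker(\N\to\operatorname{coker} f)$ using the kernels and cokernels just built; since $U$ creates both, it sends the canonical comparison $\operatorname{coim} f\to\operatorname{im} f$ to the corresponding comparison in $\GrCol$, which is an isomorphism because $\GrCol$ is abelian, whence by conservativity it is an isomorphism in $\mathrm{Mod}_{\Pop}$. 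Together with the preadditive structure, biproducts, kernels, and cokernels, this shows $\mathrm{Mod}_{\Pop}$ is abelian. Equivalently, one may phrase the whole argument by identifying $\mathrm{Mod}_{\Pop}$ with the Eilenberg--Moore category of the additive exact monad $-\circ\Pop$ on $\GrCol$ and invoking the standard fact that algebras over such a monad on an abelian category again form an abelian category.
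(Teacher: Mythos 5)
Your proof is correct, but there is nothing in the paper to compare it against: the paper explicitly declines to prove this proposition, calling it ``a good exercise in category theory for a reader,'' and records only the starting observation that the contraction product is additive in its left argument, so that $\M\oplus\N$ is again a right $\Pop$-module. Your argument supplies exactly the missing content, and along the route that remark gestures at: reduce everything to properties of the endofunctor $-\circ\Pop$ on $\GrCol$. The one non-formal input you isolate is the right one --- exactness of $-\circ\Pop$, which holds because componentwise $(\M\circ\Pop)(\Gr)=\bigoplus_{I\vdash\Gr}\M(\Gr/I)\otimes\bigotimes_{G\in I}\Pop(\Gr|_G)$ is a (finite) direct sum of functors $\M\mapsto\M(\Gr/I)\otimes W_I$ with $W_I$ a fixed vector space, and $\mathsf{k}$ is a field --- and you deploy it exactly where it is needed: kernels require no exactness (only the universal property of $\ker$ in $\GrCol$ plus functoriality), while cokernels require that $\pi\circ\Pop$ be the cokernel of $f\circ\Pop$, i.e.\ right-exactness, to descend the structure map. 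Two small points of bookkeeping: the module axioms on the kernel and cokernel are verified by cancelling the monomorphism $\iota$ (respectively the epimorphism obtained by applying $-\circ\Pop$ to $\pi$, which again uses exactness) rather than by ``faithfulness'' per se, but that is the same routine computation; and your closing identification of $\mathrm{Mod}_{\Pop}$ with the Eilenberg--Moore category of the additive exact monad $-\circ\Pop$ is a clean packaging of the whole argument, consistent with the paper's monoidal definition of contractads and their modules.
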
 
For a graphical collection $\V$, we have a natural structure of the free right $\Pop$-module on the product $\V\circ\Pop$. By "free" as usual we mean, that any morphism $\V\circ\Pop\to \M$ of modules is uniquely determined by images of generators, that is $\Hom_{\mathrm{Mod}_{\Pop}}(\V\circ\Pop,\M)\cong\Hom_{\GrCol}(\V,\M)$. We say that a right-module $\M$ is generated by generators $\V$ and relations $\Ho\subset \V\circ\Pop$, if there is exact sequence
\[
\Ho\circ\Pop\to \V\circ\Pop \to \M\to 0.
\]
\subsection{Hamiltonian cycles} The motivated example of a right module is the module of Hamiltonian cycles constructed as follows. Recall that, a \textit{Hamiltonian cycle} in a graph $\Gr$ is a cycle in $\Gr$ that visits each vertex exactly once.  Note that a Hamiltonian cycle admits two different directions.  We shall use notation $[v_1,\cdots,v_n]$ for directed Hamiltonian cycle $v_1\to \cdots\to  v_{n-1}\to v_n\to v_1$. For contractad purposes, we let the loop $[1]=\vcenter{\hbox{\begin{tikzpicture}[
        scale=0.4,
        oredge/.style={->,> = latex, thick},edge/.style={-, thick}
        ]
        \node at (-0.25,0) {\scriptsize$1$};
        \fill (0,0) circle (2pt);
        \draw[edge] (0,0) to [out=45,in=90, looseness=1.2] (1.4,0);
        \draw[oredge] (1.4,0) to [out=270,in=315, looseness=1.2] (0,0);
    \end{tikzpicture}}}$ be a unique directed Hamiltonian cycle in $\Path_1$, and $[1,2]=\vcenter{\hbox{\begin{tikzpicture}[
        scale=0.4,
        edge/.style={->,> = latex, thick},
        ]
        \node at (-0.25,0) {\scriptsize$1$};
        \node at (2.25,0) {\scriptsize$2$};
        \fill (0,0) circle (2pt);
        \fill (2,0) circle (2pt);
        \draw[edge] (0,0) to [bend right] (2,0);
        \draw[edge] (2,0) to [bend right] (0,0);
    \end{tikzpicture}}}$ be a unique directed Hamiltonian cycle in $\Path_2$.

Consider the graphical collection of Hamiltonian cycles $\CycHam$, whose component $\CycHam(\Gr)$ is the linear span of directed Hamiltonian cycles on $\Gr$. For a tube $G\subset V_{\Gr}$, we define the substitution of paths
\[
\circ^{\Gr}_G\colon \CycHam(\Gr/G)\otimes \Ham(\Gr|_G)\to \CycHam(\Gr)
\] as follows. Let $\Cyc=[v_1,\cdots,v_{k-1},\{G\},v_{k+1},\cdots,v_n]$ be a directed Hamiltonian cycle in $\Gr/G$ and $\Path=(w_1,w_2,\cdots, w_m)$ a directed Hamiltonian path in $\Gr|_G$. If $v_{k-1}$ is adjacent to $w_1$ and $w_m$ to $v_{k+1}$, we put $\Cyc\circ^{\Gr}_G \Path=[v_1,\cdots,v_{k-1},w_1,w_2,\cdots, w_m,v_{k+1},\cdots,v_n]$, otherwise we put zero. These operations endow $\CycHam$ with a structure of the right $\Ham$-module.
\begin{gather*}
    %3421
    \vcenter{\hbox{\begin{tikzpicture}[scale=0.6, edge/.style={->,> = latex, thick}]
    \fill (0,0) circle (2pt);
    \fill (1.5,0) circle (2pt);
    \fill (0.75,1.5) circle (2pt);
    \draw[edge] (0.75,1.5)--(1.5,0);%(12)3
    \draw[edge] (1.5,0)--(0,0);%34
    \draw[edge] (0,0)--(0.75,1.5);%4(12)
    \node at (0.75,1.8) {\small$\{1,2\}$};
    \node at (1.75,-0.25) {$3$};
    \node at (-0.25,-0.25) {$4$};
    \end{tikzpicture}}}  \circ^{\K_4}_{\{1,2\}}
    \hbox{\begin{tikzpicture}[scale=0.6, edge/.style={->,> = latex, thick}]
    \fill (0,0) circle (2pt);
    \fill (1.5,0) circle (2pt);
    \node at (0,0.4) {$1$};
    \node at (1.5,0.4) {$2$};
    \draw[edge] (1.5,0)--(0,0);   \end{tikzpicture}}=\vcenter{\hbox{\begin{tikzpicture}[scale=0.6, edge/.style={->,> = latex, thick}]
    \fill (0,0) circle (2pt);
    \fill (0,1.5) circle (2pt);
    \fill (1.5,0) circle (2pt);
    \fill (1.5,1.5) circle (2pt);
    \draw[edge] (1.5,1.5)--(0,1.5);%21
    \draw[edge] (0,1.5)--(1.5,0);%13
    \draw[dashed] (1.5,1.5)--(1.5,0);%23
    \draw[edge] (1.5,0)--(0,0);%34
    \draw[dashed] (0,0)--(0,1.5);%41
    \draw[edge] (0,0)--(1.5,1.5);%42
    \node at (-0.25,1.75) {$1$};
    \node at (1.75,1.75) {$2$};
    \node at (1.75,-0.25) {$3$};
    \node at (-0.25,-0.25) {$4$};
    \end{tikzpicture}}},\qquad
    %4123
    \vcenter{\hbox{\begin{tikzpicture}[scale=0.6, edge/.style={->,> = latex, thick}]
    \fill (0,0) circle (2pt);
    \fill (1.5,0) circle (2pt);
    \fill (0.75,1.5) circle (2pt);
    \draw[edge] (0.75,1.5)--(1.5,0);%(12)3
    \draw[edge] (1.5,0)--(0,0);%34
    \draw[edge] (0,0)--(0.75,1.5);%4(12)
    \node at (0.75,1.8) {\small$\{1,2\}$};
    \node at (1.75,-0.25) {$3$};
    \node at (-0.25,-0.25) {$4$};
    \end{tikzpicture}}}   \circ^{\K_4}_{\{1,2\}}
    \hbox{\begin{tikzpicture}[scale=0.6, edge/.style={->,> = latex, thick}]
    \fill (0,0) circle (2pt);
    \fill (1.5,0) circle (2pt);
    \node at (0,0.4) {$1$};
    \node at (1.5,0.4) {$2$};
    \draw[edge] (0,0)--(1.5,0);   \end{tikzpicture}}=\vcenter{\hbox{\begin{tikzpicture}[scale=0.6, edge/.style={->,> = latex, thick}]
    \fill (0,0) circle (2pt);
    \fill (0,1.5) circle (2pt);
    \fill (1.5,0) circle (2pt);
    \fill (1.5,1.5) circle (2pt);
    \draw[edge] (0,1.5)--(1.5,1.5);%12
    \draw[edge] (1.5,1.5)--(1.5,0);%23
    \draw[edge] (1.5,0)--(0,0);%34
    \draw[edge] (0,0)--(0,1.5);%41
    \draw[dashed] (0,0)--(1.5,1.5);%42
    \draw[dashed] (0,1.5)--(1.5,0);%13
    \node at (-0.25,1.75) {$1$};
    \node at (1.75,1.75) {$2$};
    \node at (1.75,-0.25) {$3$};
    \node at (-0.25,-0.25) {$4$};
    \end{tikzpicture}}}.
\end{gather*}

Let $\Path\subset \Gr$ be a directed Hamiltonian path in $\Gr$. If its endpoints are connected by an edge, then this path is extended to a unique directed Hamiltonian cycle. Such observation produces a surjective morphism of graphical collections $\Cyc\colon \Ham\twoheadrightarrow \CycHam$, that sends a path to an extended cycle, if it exists, and to zero otherwise.
\[
    \vcenter{\hbox{\begin{tikzpicture}[scale=0.6, edge/.style={->,> = latex, thick}]
    \fill (0,0) circle (2pt);
    \fill (0,1.5) circle (2pt);
    \fill (1.5,0) circle (2pt);
    \fill (1.5,1.5) circle (2pt);
    \draw[edge] (0,1.5)--(1.5,1.5);%12
    \draw[edge] (1.5,1.5)--(1.5,0);%23
    \draw[dashed] (0,0)--(1.5,0);%43
    \draw[edge] (0,0)--(0,1.5);%41
    \draw[dashed] (0,0)--(1.5,1.5);%42
    \node at (-0.25,1.75) {$1$};
    \node at (1.75,1.75) {$2$};
    \node at (1.75,-0.25) {$3$};
    \node at (-0.25,-0.25) {$4$};
    \end{tikzpicture}}}\mapsto     \vcenter{\hbox{\begin{tikzpicture}[scale=0.6, edge/.style={->,> = latex, thick}]
    \fill (0,0) circle (2pt);
    \fill (0,1.5) circle (2pt);
    \fill (1.5,0) circle (2pt);
    \fill (1.5,1.5) circle (2pt);
    \draw[edge] (0,1.5)--(1.5,1.5);%12
    \draw[edge] (1.5,1.5)--(1.5,0);%23
    \draw[edge] (1.5,0)--(0,0);%43
    \draw[edge] (0,0)--(0,1.5);%41
    \draw[dashed] (0,0)--(1.5,1.5);%42
    \node at (-0.25,1.75) {$1$};
    \node at (1.75,1.75) {$2$};
    \node at (1.75,-0.25) {$3$};
    \node at (-0.25,-0.25) {$4$};
    \end{tikzpicture}}},\text{ but }
\vcenter{\hbox{\begin{tikzpicture}[scale=0.6, edge/.style={->,> = latex, thick}]
    \fill (0,0) circle (2pt);
    \fill (0,1.5) circle (2pt);
    \fill (1.5,0) circle (2pt);
    \fill (1.5,1.5) circle (2pt);
    \draw[edge] (1.5,1.5)--(0,1.5);%21
    \draw[dashed] (1.5,1.5)--(1.5,0);%23
    \draw[edge] (1.5,0)--(0,0);%34
    \draw[dashed] (0,0)--(0,1.5);%41
    \draw[edge] (0,0)--(1.5,1.5);%42
    \node at (-0.25,1.75) {$1$};
    \node at (1.75,1.75) {$2$};
    \node at (1.75,-0.25) {$3$};
    \node at (-0.25,-0.25) {$4$};
    \end{tikzpicture}}}\mapsto 0.
\]
Note that the collection of these maps defines the surjective morphism of right $\Ham$-modules
\[
\Cyc\colon \Ham \twoheadrightarrow \CycHam,
\] where $\Ham$ is seemed as a module over itself.
\begin{theorem}\label{thm::cyclespres}
The right $\Ham$-module $\CycHam$ is generated by an element $\alpha\in \CycHam(\Path_1)$, satisfying the relations
\begin{equation}\label{eq::cychampres}
 \alpha\circ^{\Path_2}_{\{1,2\}} \nu=\alpha \circ^{\Path_2}_{\{1,2\}} \nu^{(12)}.   
\end{equation}
\end{theorem}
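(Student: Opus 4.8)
The plan is to realise $\CycHam$ as an explicit quotient of a free right $\Ham$-module and then identify that quotient with $\CycHam$ through the morphism $\Cyc$. First I would observe that, since the generator $\alpha$ sits in the component $\Path_1$ and a connected graph $\Gr$ admits exactly one partition $I$ with $\Gr/I\cong\Path_1$ (the one-block partition $I=\{V_{\Gr}\}$), the free right $\Ham$-module $\V\circ\Ham$ on a one-dimensional collection $\V$ concentrated in $\Path_1$ is canonically isomorphic to $\Ham$ regarded as a module over itself, via $\alpha\mapsto\Id$. Under this identification the canonical surjection from the free module onto $\CycHam$ sending $\alpha\mapsto[1]$ becomes exactly the morphism $\Cyc\colon\Ham\twoheadrightarrow\CycHam$ that closes a directed Hamiltonian path $(v_1,\dots,v_n)$ to the cycle $[v_1,\dots,v_n]$ when $v_n$ is adjacent to $v_1$, and sends it to $0$ otherwise. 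Thus the theorem becomes the statement that $\ker\Cyc$ coincides with the right $\Ham$-submodule $\R\subseteq\Ham$ generated by $r:=\alpha\circ^{\Path_2}_{\{1,2\}}\nu-\alpha\circ^{\Path_2}_{\{1,2\}}\nu^{(12)}$, which under the identification is $r=(1,2)-(2,1)\in\Ham(\Path_2)$.

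Next I would give an explicit description of $\R$. Because $r$ lives in $\Ham(\Path_2)$ and the only module operations applicable to it substitute Hamiltonian paths into its two legs, $\R(\Gr)$ is spanned by the elements $\gamma(r;P_1,P_2)=(P_1,P_2)-(P_2,P_1)$, where $V_{\Gr}=G_1\sqcup G_2$ runs over partitions into two tubes, $P_i$ is a directed Hamiltonian path of $\Gr|_{G_i}$, and each concatenation is read as $0$ when the joining endpoints fail to be adjacent. These elements are closed under further substitutions and under graph automorphisms, so they span the submodule. The inclusion $\R\subseteq\ker\Cyc$ is then immediate: $\Cyc$ sends both $(1,2)$ and $(2,1)$ to the unique directed cycle $[1,2]$ of $\Path_2$, whence $\Cyc(r)=0$, and $\ker\Cyc$ is a submodule.

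The heart of the argument is the reverse inclusion $\ker\Cyc\subseteq\R$. Since $\Cyc$ carries each basis path either to a basis cycle or to $0$, its kernel is spanned by two families: (a) the non-closing paths $(v_1,\dots,v_n)$ with $v_n\not\sim v_1$, and (b) the rotation differences $(v_1,\dots,v_n)-(v_2,\dots,v_n,v_1)$ for closing paths, because the $n$ cyclic rotations of a closing path are precisely the paths that close to one fixed directed cycle. I would then produce both families as generators of $\R$ simultaneously, using the singleton split $G_1=\{v_1\}$, $G_2=\{v_2,\dots,v_n\}$; here $G_2$ is a tube since the sub-path $v_2\to\cdots\to v_n$ keeps $\Gr|_{G_2}$ connected, and $\Gr/\{G_1,G_2\}\cong\Path_2$ as $\Gr$ is connected. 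The associated generator is $\gamma(r;(v_1),(v_2,\dots,v_n))=(v_1,\dots,v_n)-(v_2,\dots,v_n,v_1)$, which is exactly the type-(b) rotation difference when $v_n\sim v_1$, and which collapses to the single term $(v_1,\dots,v_n)$ of type (a) when $v_n\not\sim v_1$, since the rotated partner then vanishes. Hence every spanning element of $\ker\Cyc$ lies in $\R$, giving $\ker\Cyc=\R$ and therefore $\CycHam\cong\Ham/\R$ by the first isomorphism theorem.

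The main obstacle to watch is the bookkeeping in the explicit description of $\R$: one must verify that substituting paths only into the two legs of $r$ genuinely exhausts the submodule (for a right module no "internal" substitution of $r$ into something else is possible) and that the two-block concatenation convention agrees with the module action on $\Ham$ inherited from the contractad product. Once this is secured, the decisive observation is that the single split $\{v_1\}\sqcup\{v_2,\dots,v_n\}$ handles the closing and non-closing cases uniformly — the non-closing case being the degenerate limit in which the rotated term is zero — and it is this that makes the kernel computation collapse onto the single quadratic relation \eqref{eq::cychampres}.
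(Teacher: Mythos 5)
Your proposal is correct and takes essentially the same approach as the paper: your key identity $\gamma(r;(v_1),(v_2,\dots,v_n))=(v_1,\dots,v_n)-(v_2,\dots,v_n,v_1)$ (the second term vanishing when $v_n\not\sim v_1$) is exactly the paper's computation~\eqref{eq::nonextended_equal_zero}, built on the same singleton split $\{v_1\}\sqcup\{v_2,\dots,v_n\}$ and the relation~\eqref{eq::cychampres}, only read as the kernel computation $\ker\Cyc=\R$ inside $\Ham$ rather than as injectivity of the surjection from the abstract quotient $\M$. Under your identification of the free module on $\alpha$ with $\Ham$ as a module over itself, the two formulations coincide by the first isomorphism theorem.
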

\begin{proof}
As we have mentioned before, we have an onto morphism of right $\Ham$-modules $\Cyc\colon \Ham\to\CycHam$. Note that this map is given by the rule $\Cyc(\tau)=\alpha\circ^{\Gr}_{V_{\Gr}}\tau$, where $\alpha\in \CycHam(\Path_1)$ is the loop. Hence this right module is generated by a loop $\alpha$.  Also, Hamiltonian paths $\nu=(1,2)$ and $\nu^{(12)}=(2,1)$ determine the same Hamiltonian cycle in $\Path_2$. So, we have the onto morphism $\M\twoheadrightarrow \CycHam$, where $\M$ is the quadratic $\Ham$-module obtained from the generators and relations above. The fact that this morphism is indeed an isomorphism follows from the following lemma

\begin{claim*}
For a Hamiltonian path $\Path\in \Ham(\Gr)$ that does no lift to a Hamiltonian cycle, its image $\alpha\circ\Path$ in $\M$ is zero. Moreover, for a pair of Hamiltonian paths $\Path,\Path'$ whose corresponding Hamiltonian cycles coincide, their images also coincide in $\M$.
\end{claim*}
\begin{proof}
Let $\Path=(v_1,v_2,\cdots,v_n)$ be a Hamiltonian path of $\Gr$ that does not lift to a Hamiltonian cycle, that is $v_1$ and $v_n$ are not adjacent. So, we have
\begin{multline}\label{eq::nonextended_equal_zero}
\alpha\circ^{\Gr}_{V_{\Gr}} (v_1,v_2,\cdots,v_n)=\\=[\alpha\circ_{\{v_1,\{v_2,\cdots,v_n\}\}}^{\Gr/\{v_2,\cdots,v_n\}} (v_1,\{v_2,\cdots v_n\})]\circ^{\Gr}_{\{v_2,\cdots v_n\}}(v_2,\cdots, v_n)=^{\eqref{eq::cychampres}}\\=[\alpha\circ_{\{v_1,\{v_2,\cdots,v_n\}\}}^{\Gr/\{v_2,\cdots,v_n\}} (\{v_2,\cdots v_n\},v_1)]\circ^{\Gr}_{\{v_2,\cdots v_n\}}(v_2,\cdots, v_n)=\\=\alpha\circ^{\Gr}_{V_{\Gr}} [(\{v_2,\cdots v_n\},v_1)\circ^{\Gr}_{\{v_2,\cdots v_n\}}(v_2,\cdots, v_n))]=0.
\end{multline}
The right hand side is zero since $v_n$ and $v_1$ are not adjacent. So, we proved the first statement of the claim. Now, suppose a Hamiltonian path $\Path=(v_1,v_2,\cdots,v_n)$ lifts to a Hamiltonian cycle. Similarly to computations~\eqref{eq::nonextended_equal_zero}, we deduce the identity
\[
\alpha\circ^{\Gr}_{V_{\Gr}} (v_1,v_2,\cdots,v_n)=\alpha\circ^{\Gr}_{V_{\Gr}} (v_2,\cdots,v_n,v_1)=\cdots=\alpha\circ^{\Gr}_{V_{\Gr}} (v_n,v_1,v_2,\cdots, v_{n-1}),
\]that implies the second statement of the claim. 
\end{proof}
\end{proof}
\subsection{Cyclic equivalent trees} In a similar way, there is a right module over the contractad $\Plan$ of Planar-equivalent trees. Recall that a cyclic ordering of a $n$-element set $V$ is an orbit of $\mathbb{Z}_n$-action on the set of orderings $\mathsf{Bij}([n],V)$. We denote by $[v_1,v_2,\cdots,v_n]$ the cycle class of the ordering $(v_1,v_2,\cdots,v_n)$. We define the right $\Plan$-module of cyclic equivalent trees, denoted $\CycEq$, with component $\CycEq(\Gr)=\BiPlan(\Gr)/\sim_{\mathrm{cyclic}}$ is the set of cyclic-equivalent planar $\Gr$-admissible binary trees, and right-module structure
\[
\circ^{\Gr}_G\colon \CycEq(\Gr/G)\times \Plan(\Gr|_G)\to \CycEq(\Gr),
\] given by the grafting of planar binary trees. Similarly to the tuple description of $\Plan$, the component $\CycEq(\Gr)$ consists of cyclic vertex-orderings $[v_1,v_2,\cdots,v_n]$ that can be realised by planar $\Gr$-admissible binary trees. Sending an ordering to its cyclic class produces a morphism of $\Plan$-modules
\[
\Plan\twoheadrightarrow\CycEq,\quad (v_1,\cdots,v_n)\mapsto [v_1,\cdots,v_n].
\] Similarly to Lemma~\ref{lemma:edges_in_tuples}, we have
\begin{lemma}\label{lemma:edges_in_cyctuples}
Let $\tau=[v_1,v_2,\cdots,v_n]$ be a cyclic tuple realised by a $\Gr$-admissible planar binary tree. If $(v_i,v_{i+1})\in E_{\Gr}$, then $\tau_i=[v_1,\cdots,\{v_i,v_{i+1}\},\cdots, v_n]$ is realised by a  $(\Gr/\{v_i,v_{i+1}\})$-admissible planar binary tree.
\end{lemma}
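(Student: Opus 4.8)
The plan is to reduce the statement to its linear counterpart, Lemma~\ref{lemma:edges_in_tuples}, by choosing a good linear representative of the cyclic class, and to treat the unavoidable ``wrap-around'' situation by induction on the number of vertices. Recall that $\tau=[v_1,\dots,v_n]$ being realised means precisely that some cyclic rotation of $(v_1,\dots,v_n)$ is a linear tuple $\sigma\in\Plan(\Gr)$, i.e.\ is realised by a $\Gr$-admissible planar binary tree. Fix such a representative $\sigma$. Since $v_i$ and $v_{i+1}$ are cyclically adjacent in $\tau$, in the chosen linear representative $\sigma$ they are either consecutive or occupy the two ends; these are the only two cases to analyse.

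First I would dispose of the easy case, where $v_i,v_{i+1}$ occur in consecutive positions of $\sigma$. Since $(v_i,v_{i+1})\in E_{\Gr}$, Lemma~\ref{lemma:edges_in_tuples} applies verbatim and produces a linear tuple in $\Plan(\Gr/\{v_i,v_{i+1}\})$ obtained by merging them; this tuple is a rotation of a linear representative of $\tau_i$, so its cyclic class is exactly $\tau_i$ and hence $\tau_i$ is realised. The remaining case, in which $\sigma=(v_{i+1},v_{i+2},\dots,v_n,v_1,\dots,v_i)$ so that the edge to be contracted joins the first and last leaf of $\sigma$, is the main obstacle: a cyclic rotation of a realised linear tuple need not be realised, so one cannot simply slide $v_i$ next to $v_{i+1}$ and appeal to the linear lemma.

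For this boundary case I would argue by induction on $|V_{\Gr}|$, mirroring the structure of the proof of Lemma~\ref{lemma:edges_in_tuples}. Writing $\sigma=(u_1,\dots,u_n)$ with $u_1=v_{i+1}$ and $u_n=v_i$, the inductive characterisation of realisability stated just before Lemma~\ref{lemma:edges_in_tuples} yields an index $k$ with $1\le k\le n-1$, $(u_k,u_{k+1})\in E_{\Gr}$, and with the contracted tuple $\sigma_{(k)}$ realised in $\Gr_1:=\Gr/\{u_k,u_{k+1}\}$. For $n\ge 3$ the consecutive pair $\{u_k,u_{k+1}\}$ differs from the endpoint pair $\{u_1,u_n\}=\{v_i,v_{i+1}\}$, so $\Gr_1$ has $n-1$ vertices, the target edge persists in $\Gr_1$, and in the cyclic tuple $[\sigma_{(k)}]$ the images of $v_i$ and $v_{i+1}$ remain cyclically adjacent along the seam (when $k=1$ or $k=n-1$ one of them is absorbed into the merged vertex, but cyclic adjacency of the seam is unaffected). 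Applying the induction hypothesis in $\Gr_1$ to this seam edge shows that the corresponding contraction of $\tau_i$ by $\{u_k,u_{k+1}\}$ is realised in a graph with one fewer vertex.

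Finally I would lift this back to $\Gr/\{v_i,v_{i+1}\}$ using the right $\Plan$-module structure, which supplies the converse half of the characterisation automatically in the cyclic setting: given a realised cyclic tuple on a contraction, grafting the unique planar tree on the relevant two-element tube via the module action $\circ^{\Gr/\{v_i,v_{i+1}\}}_{\{u_k,u_{k+1}\}}$ with the $\Path_2$-generator produces a realised representative of $\tau_i$ on $\Gr/\{v_i,v_{i+1}\}$. Thus $\tau_i$ is realised, closing the induction. The genuine work sits in the boundary case, and within it in the bookkeeping of the subcase $k\in\{1,n-1\}$, where the contracted interior pair shares a vertex with $\{v_i,v_{i+1}\}$ and the grafted tube is the enlarged pair; this is the exact analogue of the $j=i+1$ subcase handled via reordering of compositions in the proof of Lemma~\ref{lemma:edges_in_tuples}.
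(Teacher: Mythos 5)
Your proof is correct, and it is essentially the argument the paper intends: the paper offers no explicit proof of this lemma, saying only that it holds ``similarly to Lemma~\ref{lemma:edges_in_tuples}'', and your working-out — reduce to that lemma when the contracted pair is interior in a realised linear representative, and for the wrap-around case induct on $|V_{\Gr}|$ by contracting at a witnessing interior edge, applying the inductive hypothesis to the seam, and regrafting via the right $\Plan$-module action — is exactly the natural fleshing-out of that remark, parallel step by step to the linear proof. The only cosmetic slip is calling the grafted tree on the two-element tube ``unique'': $\Plan(\Path_2)$ has two orderings, and one must take the one whose grafting reproduces $\tau_i$, but this is the bookkeeping you already flag and does not affect the argument.
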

Let us state the presentation of the module $\CycEq$.
\begin{theorem}
The right $\Plan$-module of cyclic equivalent binary trees $\CycEq$ is generated by an element $\beta \in \CycEq(\Path_1)$, satisfying the relations
\[
\beta \circ^{\Path_2}_{\{1,2\}} \mu=\beta \circ^{\Path_2}_{\{1,2\}} \mu^{(12)}.
\]
\end{theorem}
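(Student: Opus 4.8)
The plan is to follow the two-step pattern used for the Hamiltonian module in Theorem~\ref{thm::cyclespres} and for the contractad $\Plan$ in Theorem~\ref{thm::planpres}. First I would record that the projection $\Plan\twoheadrightarrow\CycEq$, sending an ordering to its cyclic class, is a morphism of right $\Plan$-modules given on each component by $\sigma\mapsto\beta\circ^{\Gr}_{V_{\Gr}}\sigma$; since it is surjective, $\CycEq$ is generated by the single element $\beta\in\CycEq(\Path_1)$. Because the orderings $(1,2)$ and $(2,1)$ define the same cyclic class $[1,2]\in\CycEq(\Path_2)$, the relation $\beta\circ^{\Path_2}_{\{1,2\}}\mu=\beta\circ^{\Path_2}_{\{1,2\}}\mu^{(12)}$ is satisfied. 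Hence, writing $\M$ for the quadratic right $\Plan$-module presented by $\beta$ and this relation, there is a surjection $\pi\colon\M\twoheadrightarrow\CycEq$, and everything reduces to proving that $\pi$ is an isomorphism.

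To prove injectivity I would construct a section $\iota\colon\CycEq\to\M$ inverse to $\pi$, imitating the inductive section built in the proof of Theorem~\ref{thm::planpres}. Given a realizable cyclic class $\tau=[v_1,\dots,v_n]\in\CycEq(\Gr)$, Lemma~\ref{lemma:edges_in_cyctuples} supplies a cyclically consecutive pair $(v_i,v_{i+1})\in E_{\Gr}$ (indices mod $n$) such that the contracted class $\tau_i=[v_1,\dots,\{v_i,v_{i+1}\},\dots,v_n]$ lies in $\CycEq(\Gr/\{v_i,v_{i+1}\})$, and I set
\[
\iota(\tau):=\iota(\tau_i)\circ^{\Gr}_{\{v_i,v_{i+1}\}}(v_i,v_{i+1}),
\]
using $(v_i,v_{i+1})\in\Plan(\Gr|_{\{v_i,v_{i+1}\}})$ and the module structure of $\M$, with base case $\iota$ of the loop class equal to $\beta$.

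The hard part, exactly as in Theorem~\ref{thm::planpres}, is showing that $\iota$ is well defined, i.e.\ independent of the chosen contractible edge; I would argue this by induction on $|V_{\Gr}|$. When two contractible edges are vertex-disjoint, the two contractions commute by the associativity axioms of the $\Plan$-action; when they share a vertex (a path $v_{i-1},v_i,v_{i+1}$ with both edges present), the planar-equivalence relation~\eqref{eq::planeq_rel} in $\Plan$ together with associativity reconciles the two orders. The genuinely new ingredient, and the place where the imposed relation is indispensable, is the cyclic cut: for a two-element class $[v,w]$ the two cyclically consecutive pairs are the single graph edge $\{v,w\}$ traversed in opposite directions, so well-definedness at the bottom of the recursion is precisely the identity $\beta\circ\mu=\beta\circ\mu^{(12)}$. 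This is exactly the feature distinguishing $\CycEq$ from $\Plan$, and all higher reconciliations ultimately feed into this two-element case.

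Once $\iota$ is well defined, its compatibility with graph isomorphisms and with the right $\Plan$-action follows from the inductive definition, by the same automorphism-compatibility computation as in Theorem~\ref{thm::planpres}, so $\iota$ is a morphism of right $\Plan$-modules. Since both composites $\pi\circ\iota$ and $\iota\circ\pi$ fix the generator $\beta$ and are module morphisms, they equal the respective identities, giving $\M\cong\CycEq$ and completing the proof. I expect the well-definedness of $\iota$ at the cyclic cut to be the only delicate point; alternatively, one could bypass the section by a dimension count, showing via block swaps (associativity plus the relation) that all realizable linear representatives of a fixed cyclic class have equal image in $\M$, so that $\dim\M(\Gr)\le\dim\CycEq(\Gr)$.
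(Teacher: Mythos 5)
Your proposal is correct and follows essentially the same route as the paper: establish the surjection $\M\twoheadrightarrow\CycEq$ from the quadratic module presented by $\beta$ and the relation, then build an inductive section $\CycEq\to\M$ by contracting adjacent pairs, with well-definedness and module-morphism properties argued as in Theorem~\ref{thm::planpres}. The paper simply defers those details to the analogy with Theorem~\ref{thm::planpres}, whereas you make explicit the one genuinely new point that analogy hides, namely that well-definedness at the two-element cyclic cut is exactly the imposed relation $\beta\circ^{\Path_2}_{\{1,2\}}\mu=\beta\circ^{\Path_2}_{\{1,2\}}\mu^{(12)}$.
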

\begin{proof} Let $\beta\in \CycEq(\Path_1)$ be an unary tree. We see that this element generates $\CycEq$ as a right $\Plan$-module. Moreover, this element satisfies one quadratic relation
\[
\beta\circ^{\Path_2}_{\{1,2\}}\mu^{(12)}=\beta\circ^{\Path_2}_{\{1,2\}}\mu.
\] So, we have the surjective morphism of $\Plan$-modules $\M\twoheadrightarrow \CycEq$, where $\M$ is the quadratic $\Plan$-module obtained from the generators and relations above. We claim that this morphism is an isomorphism. We construct a section $j\colon \CycEq\to \M$ inductively by the rule
 \[
j([v_1,v_2,\cdots,v_i,v_{i+1},\cdots,v_n])=j([v_1,v_2,\cdots,\{v_i,v_{i+1}\},\cdots,v_n])\circ_{\{v_i,v_{i+1}\}}(v_i,v_{i+1}),
 \] for a pair $v_{i},v_{i+1}$ of adjacent vertices. The fact that this correspondence is well-defined and defines a morphism of right modules is analogues to ones in the proof of Theorem~\ref{thm::planpres}.
\end{proof}
\subsection{Permutations as right-module}
We describe other examples of right $\Plan$-modules arising from permutations. We consider the permutation contractad $\Perm$ with components $\Perm(V_{\Gr})=\mathsf{Bij}([n], V_{\Gr})$, where $n=|V_{\Gr}|$, and substitutions as contractad maps. Note that the contractad $\Plan$ forms a subcontractad of $\Perm$ generated in the component $\Path_2$. Let us describe $\Perm$ as a right $\Plan$-module. For a graph $\Gr$, its complement graph, denoted $\overline{\Gr}$, is the graph on the same vertex set but with the complement set of edges $E_{\overline{\Gr}}:=\overline{E_{\Gr}}$. We define a graphical collection $\overline{\Ham}$, whose component $\overline{\Ham}(\Gr):=\Ham(\overline{\Gr})$ consists of Hamiltonian paths in the complement graph $\overline{\Gr}$.
\begin{theorem}\label{thm::perm_plan} The contractad $\Perm$ is the free right $\Plan$-module generated by Hamiltonian paths in complement graphs
\[
\Perm\cong \overline{\Ham}\circ \Plan.
\] 
\end{theorem}
\begin{proof}
We have a morphism of right $\Plan$-modules induced from contractad structure on $\Perm$
\[
\pi\colon\overline{\Ham}\circ\Plan \hookrightarrow \Perm\circ\Perm\overset{\gamma_{\Perm}}{\to} \Perm.
\]  To state the isomorphism, we construct the inverse morphism $\iota\colon \Perm\to \overline{\Ham}\circ\Plan$ inductively as follows. Let $\sigma=(v_1,\cdots,v_n)\in \Perm(\Gr)$ some permutation. If there is no index $i$ with $(v_i,v_{i+1})\in E_{\Gr}$, then $\sigma$ determines a Hamiltonian path in the complement $\overline{\Gr}$, so we put $\iota(\sigma)=\sigma\in \overline{\Ham}(\Gr)$. Otherwise, we could find a pair of adjacent vertices $v_i,v_{i+1}$ and by induction assumption, we set $\iota(\sigma)=\iota(\sigma_i)\circ^{\Gr}_{\{v_i,v_{i+1}\}} (v_i,v_{i+1})$. The fact that this correspondence is well-defined and defines a morphism of right modules is analogues to ones in the proof of Theorem~\ref{thm::planpres}. By the construction, this morphism is a section of $\pi$, so $\pi\iota=\Id_{\Perm}$. Also, by the construction, the endomorphism $\iota\pi$ of $\overline{\Ham}\circ\Plan$ is identical on the generators $\overline{\Ham}$, so $\iota\pi=\Id_{\overline{\Ham}\circ\Plan}$.
\end{proof}
Also, we consider the right $\Perm$-module of cyclic equivalent permutations $\CycPerm$, with components $\CycPerm(\Gr)=\mathsf{Bij}([n], V_{\Gr})/\sim_{\mathrm{cyclic}}$. We define a graphical collection $\overline{\CycHam}$ encoding Hamiltonian cycles in complement graphs, except the convention $\overline{\CycHam}(\Path_1)=\varnothing$. Similarly to the case of permutations, we state the following isomorphism, the proof of which repeats the previous case
\begin{theorem}\label{thm::cycperm_plan} We have an isomorphism of right $\Plan$-modules
\[
\CycPerm\cong \CycEq\oplus (\overline{\CycHam}\circ \Plan).
\]
\end{theorem}

\section{Koszul contractads and modules}\label{sec::koszul}
In this section, we briefly recall the Koszul theory for contractads and extend this notion to the case of right contractad modules. We show that the contractad $\Ham$ of Hamiltonian paths is Koszul and its right module $\CycHam$ of Hamiltonian cycles is Koszul.
\subsection{Free resolutions} In this subsection, we explain the motivation of Koszul theory for contractads and its modules.

Through this section, we assume $\Pop$ to be a contractad with no extra differential graded structure. A quasi-free $\Pop$-module is a free right graded $\Pop$-module $\C_{\bullet}\circ\Pop$ with square-zero endomorphism $\partial$, $\partial^2=0$, of degree $|\partial|=-1$. Since $\C_{\bullet}\circ\Pop$ is free, the differential $\partial$ is uniquely determined by the images of generators
\[
\C_{\bullet}\to \C_{\bullet-1}\circ\Pop.
\]  A free $\Pop$-resolution of a right module $\M$ is a quasi-free module $(\C_{\bullet}\circ\Pop,\partial)$ with a surjective quasi-isomorphism $(\C_{\bullet}\circ\Pop,\partial)\overset{\simeq}{\twoheadrightarrow} (\M,0)$.

By classical homological algebra arguments, it can be shown that any two free resolutions for a module are homotopy equivalent. So, we are interested in finding a "good candidate" for free resolution, i.e., to find a minimal possible resolution.

Let $\Pop$ be a contractad with a contractad morphism $\epsilon\colon \Pop\to \mathbb{1}$, called augmentation. We let $\mathbb{1}_{\Pop}$ be the trivial right $\Pop$-module with the action induced from augmentation. We also let $\overline{\Pop}=\ker \epsilon$ to be an augmentation ideal. In this case, for a module $\M$, a "good" resolution is a \textit{minimal resolution}, that is a free resolution $\C_{\bullet}\circ\Pop$ with \textit{decomposable} differential, $\partial(\C_{\bullet})\subset \C_{\bullet-1}\circ\overline{\Pop}$.

There is a standard inductive way in homological algebra for constructing minimal models for modules. For a right $\Pop$-module $\M$, let $\V$ be a minimal generating collection, so we have a surjective morphism
\[
\V\circ \Pop\overset{\pi}{\to} \M\to 0.
\] Since $\V$ is minimal, the projection is decomposable. We let $\mathcal{C}_0=\V$. Next, we define a graphical collection $\mathcal{C}_1$ as a minimal generating set for a kernel of the projection $\pi$. In other words, $\mathcal{C}_1=\Ho$ is a minimal collection of relations of $\M$. For $i>1$, we define $\mathcal{C}_{i+1}$ inductively as the minimal generating collection for a kernel of $\C_{i}\circ \Pop\overset{\partial_{i}}{\to}\C_{i-1}\circ\Pop$. All in all, we have constructed a minimal resolution
\[
\cdots\to s^i\C_{i}\circ\Pop\overset{\partial_i}{\to}s^{i-1}\C_{i-1}\circ\Pop\to\cdots\to s\Ho\circ\Pop\to\V\circ\Pop\overset{\pi}{\to}\M\to 0,
\] where suspensions $s^i\C_i$ are placed for homological reasons. The components $\C_i$ in minimal resolution are called \textit{syzygies}. Note that the construction above implies the uniqueness of syzygies up to isomorphism.

In particular, for a trivial module $\mathbb{1}_{\Pop}$, the first terms of its minimal resolution are
\[
\cdots\to s^2\R\circ\Pop\to s\E\circ\Pop\to\Pop\to \mathbb{1}_{\Pop}\to 0,
\] where $\E$ and $\R$ are minimal generators and relations of the contractad $\Pop$. In particular, its first syzygies are $\C_0=\mathbb{1}$, $\C_1=\E, \C_2=\R$.

Unfortunately, in a general case, the description of higher syzygies is highly non-trivial task. But for particular types of contractads and their modules, called \textit{Koszul}, we can describe syzygies explicitly. The rest of this section is dedicated to these modules.
\begin{remark}
In~\cite{fresse2003koszul}, a framework of homological algebra for operadic modules was proposed. In particular, the author introduced Bar-Cobar construction for right modules over (co)operads for building free resolutions of right modules $\mathsf{B}\M\circ\Pop\to \M$ in a functorial way, but these models are too "large" to be minimal. We note to the reader that these constructions naturally extended to our case. We omit these constructions since we are interested in the constructions of minimal resolutions for  modules.
\end{remark}
\subsection{Koszul dual contractads and modules}
In this subsection, we recall the notion of a Koszul dual contractad, for details, see~\cite[Sec.~3]{lyskov2023contractads}, and extend this notion to the case of right contractad modules.

Starting from this point we are working in the category of differential graded Vector spaces with Koszul signs rules. In this category, the switching map $\tau\colon V_{\bullet}\otimes W_{\bullet}\to V_{\bullet}$ is given by $\tau(v\otimes w)=(-1)^{|v|\cdot|w|}w\otimes v$. In particular, for a pair of linear maps $f\colon V\to W$ and $g\colon W\to V$, we define $f\otimes g\colon V\otimes W\to V'\otimes W'$ as $$(f\otimes g)(v\otimes w)=(-1)^{|g|\cdot |v|}f(v)\otimes g(w).$$

\subsubsection{Suspension contractad} When we deal with homologically graded contractads and modules, it is important to make a correct definition of "suspension". Similarly to operads, we define the suspension of contractads as follows. The Suspension contractad, denoted $\Susp$, is the contractad with components $\Susp(\Gr)=\Hom(\mathsf{k}s^{\otimes V_{\Gr}},\mathsf{k}s)$, where $\mathsf{k}s$ is a linear span of a homological degree $1$ element $s$, with the contractad product given by the composition of functions,
\begin{gather*}
    \gamma^{\Gr}_I\colon \Susp(\Gr/I)\otimes\bigotimes_{G\in I} \Susp(\Gr|_G) \to \Susp(\Gr)
    \\
    \gamma^{\Gr}_I(g;f_1,f_2,...,f_k)=g\circ(f_1\otimes f_2\otimes...\otimes f_k).
\end{gather*} Note that each component $\Susp(\Gr)$ is a one-dimensional vector space concentrated in degree $(1-n)$. For each ordering $(v_1,v_2,\cdots,v_n)$ of the vertex set $V_{\Gr}$, we consider a generator $\omega_{v_1,v_2,\cdots,v_n}:=s_{v_1}\otimes s_{v_2}\otimes\cdots\otimes s_{v_n}\mapsto s$ of $\Susp(\Gr)$. For different orderings we have $\omega_{\sigma(v_1),\cdots,\sigma(v_n)}=(-1)^{|\sigma|}\omega_{v_1,\cdots,v_n}$. Also, by Koszul sign rules, we have non-trivial signs in contractad compositions. For example, we have the identity of the form
\[
\omega_{\{1,2\},3}\circ^{\Path_3}_{\{1,2\}} \omega_{1,2}=\omega_{1,2,3},\quad\text{while}\quad \omega_{1,\{2,3\}}\circ^{\Path_3}_{\{2,3\}} \omega_{2,3}=-\omega_{1,2,3}.
\] Indeed, from the rule $(f\otimes g)(a\otimes b)=(-1)^{|g||a|}f(a)\otimes g(b)$, we have
\begin{multline*}
(\omega_{1,\{2,3\}}\circ^{\Path_3}_{\{2,3\}} \omega_{2,3})(s_1\otimes s_2\otimes s_3)=\omega_{1,\{2,3\}}\circ [\Id\otimes \omega_{2,3}](s_1\otimes s_2\otimes s_3)=-\omega_{1,\{2,3\}}(s_1\otimes s_{\{2,3\}})=-s.
\end{multline*}
For a contractad $\Pop$, we define its suspension $\Susp\Pop$ as the Hadamard product $\Susp\Pop:=\Susp\underset{\mathrm{H}}{\otimes}\Pop$ (take usual tensor product componentwise $\Susp(\Gr)\otimes\Pop(\Gr)$) with obvious contractad structure. In particular, if $\M$ is a right $\Pop$-module, then its suspension $\Susp\M$ forms a right $\Susp\Pop$-module.

In a dual way, we define the desuspension cocontractad $\Susp^{-1}$ as linear dual to suspension contractad $\Susp^{-1}:=\Susp^*$. Each component of this contractad is $\Susp^{-1}(\Gr)=\Hom(\mathsf{k}[s],\mathsf{k}[s]^{\otimes V_{\Gr}})$ is a one-dimensional vector space concentrated in degree $(n-1)$ with the obvious cocontractad structure.

\subsubsection{Koszul dual contractads}
For a pair of graphical collections $\Pop,\Q$, we define its \textit{infinitesimal product} by the rule
\[
(\Pop\circ'\Q)(\Gr)=\bigoplus_{G}\Pop(\Gr/G)\otimes\Q(\Gr|_G),
\] where the sum is taken over all tubes.

Note that the infinitesimal product of $\Pop$ with itself is isomorphic to the weight 2 part of the free contractad on $\Pop$
\[
\Pop\circ'\Pop \cong \T^{(2)}(\Pop).
\] We say that a contractad $\Pop$ is quadratic if it admits a presentation with generators $\E$ and relations $\R\subset \E\circ'\E$. Note that all contractads that we described in Section~\ref{sec::examples_of_contractads}, ~\ref{sec::planartrees} are quadratic.

\begin{defi}[Koszul dual contractads]
Let $\Pop=\Pop(\E,\R)$ be a quadratic contractad, whose generators $\E$ are finite-dimensional componentwise. We define the Koszul dual contractad $\Pop^!$ as the quadratic contractad with the presentation
\begin{equation}\label{eq::Koszul::dual}
 \Pop^!=\Pop(s^{-1}\Susp^{-1}\E^*,\R^{\bot}),  
\end{equation}
where the space of relation $\R^{\bot}$ is the orthogonal with respect to the following pairing 
$$\langle-,-\rangle\colon (\E\circ'\E)\otimes (s^{-1}\Susp^{-1}\E^*\circ' s^{-1}\Susp^{-1}\E^*)\rightarrow \mathsf{k}.$$ 
Note that, the homological shift is hidden in the pairing due to appropriate suspensions.
\end{defi}
Note that the operation of taking Koszul dual contractad is an involutive
\[
(\Pop^!)^!\cong \Pop.
\] Also, we define the Koszul dual cocontractad $\Pop^{\cokoszul}$, by the rule
\[
\Pop^{\cokoszul}:=\Susp^{-1}(\Pop^{!})^*,
\] where $\Susp^{-1}$ is a desuspension cocontractad. It was shown in~\cite{lyskov2023contractads}, that the Koszul dual cocontractad $\Pop^{\cokoszul}$ is a quadratic cocontractad $\Pop^{\cokoszul}=\Q(s\E,s^2\R)$ with cogenerators $s\E$ and corelations $s^2\R$.

\begin{remark}
The suspension in the dualization~\eqref{eq::Koszul::dual} was initially suggested by Ginzburg and Kapranov in~\cite{ginzburg1994koszul}. 
This choice of suspensions is good enough for binary contractads. Indeed, if $\Pop(\E,\R)$ is a quadratic operad with $\E$ concentrated in the component $\Path_2$ that does not have extra homological grading ($\E$ is an ordinary vector space), the generators $s^{-1}\Susp^{-1}\E^*=\E^*\otimes \mathrm{sgn}_2$ of the Koszul dual contractad $\Pop^{!}$ also belongs to an ordinary vector space and all homological shifts are cancelled.
\end{remark}

\begin{example}
Let us consider several examples of Koszul dual contractads.
\begin{itemize}
    \item For the commutative contractad $\Com$, its Koszul dual, denoted $\Lie$, is the graphical analogue of the Lie operad. This contractad generated by an anti-symmetric generator $b$, $b^{(12)}=-b$, satisfying the relations
    \begin{gather}
    b \circ_{\{1,2\}}^{\Path_3} b =  b \circ_{\{2,3\}}^{\Path_3} b,
    \\
    b\circ_{\{1,2\}}^{\K_3} b + (b\circ_{\{1,2\}}^{\K_3} b)^{(123)} + (b\circ_{\{1,2\}}^{\K_3} b)^{(321)}=0.
    \end{gather}
    \item The Associative contractad is self-dual: $\Ass^!\cong \Ass$.
    \item For the rooted spanning trees contractad $\RST$, its Koszul dual $\RST^!$ is the contractad generated by a generator $\nu=\mu^!$, satisfying the relations
    \begin{gather*}
    \nu \circ^{\Path_3}_{\{1,2\}} \nu= \nu \circ^{\Path_3}_{\{2,3\}} \nu,
     \\
     \nu \circ^{\Path_3}_{\{1,2\}} \nu^{(12)} = \nu^{(12)} \circ^{\Path_3}_{\{2,3\}} \nu,
     \\
     \nu \circ^{\K_3}_{\{1,2\}} \nu= \nu \circ^{\K_3}_{\{2,3\}} \nu=(\nu \circ^{\K_3}_{\{2,3\}}\nu)^{(23)}.
    \end{gather*} At the moment, the combinatorial nature of the interpretation of this object is unclear
\end{itemize}
\end{example}
The following proposition explains the motivation of the contractad $\Plan$ constructed in Section~\ref{sec::planartrees}.
\begin{prop}\label{prop::koszuldualpaths}
The contractad of Hamiltonian paths $\Ham$ and the contractad of planar-equivalent trees $\Plan$ are Koszul dual
\begin{equation}
    \Ham^!\cong \Plan.
\end{equation}
\end{prop}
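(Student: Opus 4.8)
The plan is to compute the Koszul dual $\Ham^!$ directly from the presentation in Theorem~\ref{thm::hampres} and match it with the presentation of $\Plan$ in Theorem~\ref{thm::planpres}. Both contractads are quadratic and binary, generated by the regular $\mathbb{Z}_2$-representation $\E=\langle\nu,\nu^{(12)}\rangle$ concentrated in $\Path_2$. First I would check that the dual generators agree: by the remark following the definition of Koszul duality, for a binary contractad the dual generators are $s^{-1}\Susp^{-1}\E^*=\E^*\otimes\mathrm{sgn}_2$, and since the regular representation is self-dual and is merely permuted by the sign twist, this is again a regular $\mathbb{Z}_2$-representation in $\Path_2$; fixing $\nu^\vee\leftrightarrow\mu$ and $(\nu^{(12)})^\vee\leftrightarrow\mu^{(12)}$ identifies these with the generators of $\Plan$. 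It then remains to prove $\R_{\Ham}^\perp=\R_{\Plan}$ inside $\E\circ'\E$. Because $\E$ sits in $\Path_2$, the infinitesimal product $\E\circ'\E$ is concentrated on the connected three-vertex graphs, of which there are exactly two, $\Path_3$ and $\K_3$, so the statement reduces to a finite check on these two components.

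I would next record the dimensions using the planar-tree basis $(\E\circ'\E)(\Gr)=\BiPlan(\Gr)$. On $\Path_3$ there are eight admissible planar binary trees; relations \eqref{eq::ham_assrel}--\eqref{eq::ham_monrel2} give $\dim\R_{\Ham}(\Path_3)=6$ (two ``associativity'' differences and four vanishing monomials), while \eqref{eq::planeq_rel} gives $\dim\R_{\Plan}(\Path_3)=2$, so $\dim\R_{\Ham}(\Path_3)^\perp=8-6=2=\dim\R_{\Plan}(\Path_3)$. On $\K_3$ there are twelve admissible trees and both $\R_{\Ham}(\K_3)$ and $\R_{\Plan}(\K_3)$ equal the full associativity space of dimension $6$. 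The $\K_3$ component then comes for free: by the proof of Corollary~\ref{cor::upperbound_paths} one has $\R_{\Ham}(\K_3)=\R_{\Ass}(\K_3)$, and on $\K_3$ the relations of $\Plan$ likewise coincide with those of $\Ass$; since $\Ass^!\cong\Ass$, the space $\R_{\Ass}(\K_3)$ is Lagrangian for the duality pairing, whence $\R_{\Ham}(\K_3)^\perp=\R_{\Ass}(\K_3)=\R_{\Plan}(\K_3)$.

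For the $\Path_3$ component, equal dimensions reduce the claim to the orthogonality $\langle\R_{\Ham}(\Path_3),\R_{\Plan}(\Path_3)\rangle=0$. The pairing is block-diagonal with respect to the $2$-element tube $G\in\{\{1,2\},\{2,3\}\}$ appearing at the lower vertex, and within a block it is diagonal in the $\{\nu,\nu^{(12)}\}$-basis up to the suspension sign carried by $\Susp$. The space $\R_{\Plan}(\Path_3)$ is spanned by the two associativity relations, each equating a left comb with a right comb of the same leaf order; the two trees in such a relation lie in different tube-blocks and acquire opposite suspension signs --- precisely the phenomenon $\omega_{\{1,2\},3}\circ^{\Path_3}_{\{1,2\}}\omega_{1,2}=\omega_{1,2,3}$ versus $\omega_{1,\{2,3\}}\circ^{\Path_3}_{\{2,3\}}\omega_{2,3}=-\omega_{1,2,3}$ --- so each associativity relation is isotropic. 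Against the four vanishing monomials that span the rest of $\R_{\Ham}(\Path_3)$ the pairing vanishes because the only surviving decoration contractions are of the form $\langle\nu^{(12)},\mu\rangle=0$. This gives $\R_{\Plan}(\Path_3)\subseteq\R_{\Ham}(\Path_3)^\perp$, and the dimension count forces equality.

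The main obstacle is the careful bookkeeping of the suspension and Koszul signs entering the pairing; the decisive and slightly subtle point is that the associativity relation is isotropic, which is exactly what encodes $\Ass^!\cong\Ass$ locally and propagates it to $\Ham^!\cong\Plan$. A cleaner organization that minimizes sign-chasing is to exploit the chain of relation inclusions $\R_{\Plan}\subseteq\R_{\Ass}\subseteq\R_{\Ham}$, both immediate from the presentations; applying $(-)^\perp$ together with $\R_{\Ass}^\perp=\R_{\Ass}$ confines both $\R_{\Ham}^\perp$ and $\R_{\Plan}$ to the $4$-dimensional Lagrangian $\R_{\Ass}(\Path_3)$, where identifying the two $2$-dimensional subspaces is the smallest possible computation.
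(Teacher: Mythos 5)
Your proposal is correct and takes essentially the same route as the paper: the paper's proof consists of citing the presentations from Theorem~\ref{thm::hampres} and Theorem~\ref{thm::planpres} and asserting that a direct computation shows $\R_{\Ham}^{\bot}=\R_{\Plan}$. Your argument simply supplies that computation in full --- the reduction to the components $\Path_3$ and $\K_3$, the dimension counts $8-6=2$ and $12-6=6$, and the sign analysis showing the associativity relations are isotropic --- all of which the paper leaves implicit.
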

\begin{proof}
Consider the quadratic representation of $\Ham$ from Theorem~\ref{thm::hampres}. Let $\mu:=\nu^!$ a Koszul dual generator of $\Ham^!$. By direct computations, we see that the orthogonal complement $\R^{\bot}$ of relations $\R_{\Ham}$ coincides with the linear span of relations of the contractad $\Plan$ given in Theorem~\ref{thm::planpres}.
\end{proof}

\subsubsection{Koszul dual modules}   Let $\Pop=\Pop(\E,\R)$ be a quadratic contractad. A quadratic right $\Pop$-module is a right $\Pop$-module that admits presentation $\M=\langle \V,\Ho \rangle_{\Pop}$ with generators $\V$ and relations $\Ho\subset \V \circ' \E$. Similarly to contractads, for a quadratic $\M$, we define its Koszul dual (co)contractad as follows.
\begin{defi}
Let $\M=\langle \V,\Ho \rangle_{\Pop}$ be a quadratic module over quadratic contractad $\Pop$ with the space of generators $\M$, whose generators are finite-dimensional componentwise. We define the Koszul dual $\M^!$ quadratic $\Pop^!$-module with the presentation
\begin{equation}\label{eq::Koszul::dual_module}
 \M^!=\langle \Susp^{-1}\V^*,\Ho^{\bot}\rangle_{\Pop^!},  
\end{equation}
where the space of relation $\Ho^{\bot}$ is the orthogonal complement of $\Ho$ with respect to the following pairing 
$$\langle-,-\rangle\colon (\V\circ'\E)\otimes (\Susp^{-1}\V^*\circ' s^{-1}\Susp^{-1}\E^*)\rightarrow \mathsf{k}.$$
Also, we define the Koszul dual  $\Pop^{\cokoszul}$-comodule $\M^{\cokoszul}$ by the rule
\begin{equation}
\M^{\cokoszul}:=\Susp^{-1}(\M^{!})^*
\end{equation}
\end{defi}
\begin{example}
For trivial $\Pop$-module $\mathbb{1}_{\Pop}$ its Koszul dual is $\mathbb{1}_{\Pop}^!\cong \Pop^!$, where $\Pop^!$ is considered as a regular $\Pop^!$-module. In particular, its Koszul dual comodule is $\mathbb{1}_{\Pop}^{\cokoszul}\cong \Pop^{\cokoszul}$. In a dual fashion, for regular $\Pop$-module $\Pop$, its Koszul dual $\Pop^!\cong \mathbb{1}_{\Pop^!}$ is a trivial $\Pop^!$-module.    
\end{example}

Through the isomorphism $(\Pop^{!})^!\cong \Pop$, the operation of taking Koszul dual module is also involutive 
\[
(\M^!)^!\cong \M.
\] Also, since of the quadratic presentation of Koszul dual cocontractad $\Pop^{\cokoszul}=\Q(s\E,s^2\R)$, the Koszul dual comodule $\M^{\cokoszul}$ is a quadratic $\Pop^{\cokoszul}$-module with cogenerators $\V$ and corelations $s\Ho$
\[
\M^{\cokoszul}=\langle \V,s\Ho\rangle_{\Pop^{\cokoszul}}. 
\]
\begin{remark}
The choice of suspensions in the dualisation~\eqref{eq::Koszul::dual_module} is good enough for unary modules over binary contractads. Indeed, if $\Pop$ is a binary contractad with no extra homological grading, its dual $\Pop^{!}$ is also binary contractad with no homological grading. If $\M=\langle\V,\Ho\rangle_{\Pop}$ is a quadratic module over $\Pop$ with generators $\V$ concentrated in component $\Path_1$ with no extra homological grading, then the Koszul dual module $\M^{!}$ is also unary module with no extra homological grading.
\end{remark}
Similarly to Proposition~\ref{prop::koszuldualpaths}, we have
\begin{prop}\label{prop::koszuldualcycles}
The right $\Ham$-module of Hamiltonian cycles $\CycHam$ and the right $\Plan$-module of cyclic equivalent trees $\Plan$ are Koszul dual
\begin{equation}
    \CycHam^!\cong \CycEq.
\end{equation}
\end{prop}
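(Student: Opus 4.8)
The plan is to mirror the proof of Proposition~\ref{prop::koszuldualpaths}: compute the Koszul dual module $\CycHam^{!}$ directly from its quadratic presentation and match it with the presentation of $\CycEq$. By Proposition~\ref{prop::koszuldualpaths} we already have $\Ham^{!}\cong\Plan$, so $\CycHam^{!}$ is automatically a right $\Plan$-module, and it only remains to identify its generators and relations.

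First I would record the two presentations. By Theorem~\ref{thm::cyclespres}, $\CycHam$ is quadratic with a single generator $\alpha\in\CycHam(\Path_1)$ and the single relation $\alpha\circ^{\Path_2}_{\{1,2\}}\nu-\alpha\circ^{\Path_2}_{\{1,2\}}\nu^{(12)}$ of \eqref{eq::cychampres}, which spans $\Ho\subset\V\circ'\E$ in the component $\Path_2$; here $\V=\langle\alpha\rangle$ sits in $\Path_1$ and $\E=\langle\nu,\nu^{(12)}\rangle$ sits in $\Path_2$. On the dual side, $\CycEq$ is generated by $\beta\in\CycEq(\Path_1)$ with the single relation $\beta\circ^{\Path_2}_{\{1,2\}}\mu-\beta\circ^{\Path_2}_{\{1,2\}}\mu^{(12)}$, where $\mu=\nu^{!}$ generates $\Plan$.

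Next I would compute $\CycHam^{!}=\langle\Susp^{-1}\V^{*},\Ho^{\bot}\rangle_{\Plan}$ using the dualisation~\eqref{eq::Koszul::dual_module}. Since $\CycHam$ is a unary module (generators in $\Path_1$) over the binary contractad $\Ham$, the remark following~\eqref{eq::Koszul::dual_module} guarantees that all suspensions cancel: $\Susp^{-1}\V^{*}$ is one-dimensional in $\Path_1$, with generator the linear dual of $\alpha$, which I identify with $\beta$. The only real computation is the orthogonal complement $\Ho^{\bot}$ inside the two-dimensional space $(\Susp^{-1}\V^{*}\circ' s^{-1}\Susp^{-1}\E^{*})(\Path_2)=\langle\beta\circ\mu,\beta\circ\mu^{(12)}\rangle$. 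The pairing factors as the canonical pairing $\langle\alpha,\beta\rangle=1$ on the generators times the generator pairing $\langle\nu,\mu\rangle=1$, $\langle\nu^{(12)},\mu^{(12)}\rangle=-1$ on $\E$, the sign $-1$ coming precisely from the $\mathrm{sgn}_2$-twist $s^{-1}\Susp^{-1}\E^{*}\cong\E^{*}\otimes\mathrm{sgn}_2$ attached to the transposition. Pairing the $\CycHam$-relation against $a\,\beta\circ\mu+b\,\beta\circ\mu^{(12)}$ then yields $a+b$, so $\Ho^{\bot}=\langle\beta\circ\mu-\beta\circ\mu^{(12)}\rangle$, which is exactly the defining relation of $\CycEq$.

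Finally, since the generators and relations of $\CycHam^{!}$ and $\CycEq$ coincide under $\Ham^{!}\cong\Plan$, the two quadratic $\Plan$-modules are isomorphic. The main obstacle is the bookkeeping of the suspension signs in the module pairing: one must verify that the $\mathrm{sgn}_2$-twist produces $\langle\nu^{(12)},\mu^{(12)}\rangle=-1$ rather than $+1$, since this is exactly what turns the orthogonality condition into the \emph{symmetric} relation $\beta\circ\mu=\beta\circ\mu^{(12)}$ of $\CycEq$ instead of an antisymmetric one. The remark on unary modules over binary contractads is what keeps this sign analysis clean.
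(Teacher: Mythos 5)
Your proposal is correct and is essentially the paper's own argument: the paper offers no separate proof here, deducing the statement ``similarly to Proposition~\ref{prop::koszuldualpaths}'', i.e.\ precisely by dualizing the quadratic presentation of Theorem~\ref{thm::cyclespres} and computing the orthogonal complement $\Ho^{\bot}$ of the single relation, which is what you do. Your sign bookkeeping --- that the $\mathrm{sgn}_2$-twist in $s^{-1}\Susp^{-1}\E^{*}\cong\E^{*}\otimes\mathrm{sgn}_2$ forces $\langle\nu^{(12)},\mu^{(12)}\rangle=-1$, so that $\Ho^{\bot}$ is spanned by the \emph{symmetric} relation $\beta\circ\mu-\beta\circ\mu^{(12)}$ matching the presentation of $\CycEq$ --- is exactly the one nontrivial check, and you carry it out correctly.
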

\subsection{Koszul contractads and modules}
Let $\M=\langle \V,\Ho\rangle_{\Pop}$ be a quadratic module over quadratic contractad $\Pop(\E,\R)$. Consider the free $\Pop$-module $\M^{\cokoszul}\circ\Pop$. We define an endomorphism of this module induced by
\begin{equation}
\M^{\cokoszul}\overset{\Delta'}{\rightarrow}\M^{\cokoszul}\circ' s\E \overset{s^{-1}}{\to} \M^{\cokoszul}\circ' \E\hookrightarrow \M^{\cokoszul}\circ \Pop.   
\end{equation} The morphism $\partial_{\kappa}$ is square zero because $s^{-2}(\Delta')^2$ maps generators $\M^{\cokoszul}$ to $\M^{\cokoszul}\circ \R$ and the image of relations $\R$ with respect to $\gamma_{\Pop}$ in $\Pop$ is zero by definition.
\begin{defi}[Koszul complex]
The Koszul complex of a quadratic $\Pop$-module $\M$ is the quasi-free $\Pop$-module
\[
\mathcal{K}(\M):=(\M^{\cokoszul}\circ\Pop,\partial_{\kappa})
\]
\end{defi}
There is a morphism of $\Pop$-modules $\pi\colon \mathcal{K}(\M)\to\M$ induced by the morphism of graphical collections
\[
\M^{\cokoszul}\twoheadrightarrow s\V\overset{s^{-1}}{\to}\V\hookrightarrow \M.
\] The induced morphism $\pi$ preserves differentials, where $\M$ is viewed as a dg module with zero differential. It suffices to check that composition $\pi\partial_{\kappa}$ maps weight 2 component $(\M^{\cokoszul})^{(2)}$ to zero. Indeed, since $\M^{\cokoszul}$ is a $\Pop^{\cokoszul}$ module with corelations $s\Ho$, we have $(\M^{\cokoszul})^{(2)}=s\Ho$, and
\[
\pi(\partial_{\kappa}(s\Ho))=\gamma_{\M}(\Ho)=0.
\]
\begin{example}
In practice, we take $\M=\mathbb{1}_{\Pop}$ a trivial module. Its Koszul dual comodule is the regular module $\Pop^{\cokoszul}$. So, the Koszul projection $\mathcal{K}(\mathbb{1}_{\Pop})\to \mathbb{1}_{\Pop}$ has the form
\[
\Pop^{\cokoszul}\circ\Pop\to \mathbb{1}_{\Pop}.
\] And the projection is induced from counit $\Pop^{\cokoszul}\to \mathbb{1}$.
\end{example}
\begin{defi}[Koszul modules and contractads]\label{defi::koszul}\hfill\break
\begin{itemize}
    \item[(i)] A quadratic right $\Pop$-module $\M$ is called Koszul if the Koszul projection 
    \[
    \mathcal{K}_{\bullet}(\M)=\M^{\cokoszul}\circ\Pop \overset{\simeq}{\to} \M
    \] is a quasi-isomorphism of $\Pop$-modules.
    \item[(ii)] A quadratic contractad $\Pop$ is called Koszul if the trivial $\Pop$-module $\mathbb{1}_{\Pop}$ is Koszul. In other words, we have the quasi-isomorphism
    \[
    \Pop^{\cokoszul}\circ\Pop\overset{\simeq}{\to} \mathbb{1}_{\Pop}.
    \]
\end{itemize}
\end{defi}
\begin{example}
In~\cite{lyskov2023contractads}, the author show that the contractads $\Com,\Ass$ are Koszul, while $\RST$ is not. Also, a quadratic contractad $\Pop$ is Koszul if and only if its Koszul dual $\Pop^!$ is.
\end{example}
As an immediate consequence of Definition~\ref{defi::koszul}, we have
\begin{sled}
If $\M$ is a Koszul $\Pop$-module, then the Koszul complex $\mathcal{K}_{\bullet}(\M)$ is a minimal $\Pop$-resolution of $\M$. In particular, the syzygies of $\M$ are
\[
\C_n\cong (\M^{\cokoszul})^{(n)}\cong \Susp^{-1}((\M^!)^{(n)})^*.
\]
\end{sled}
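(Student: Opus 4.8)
The plan is to read off both assertions directly from the definition of Koszulness together with the structural features of the Koszul complex established above. First I would record that $\mathcal{K}(\M)=(\M^{\cokoszul}\circ\Pop,\partial_{\kappa})$ is, by construction, a free right $\Pop$-module on the graphical collection $\M^{\cokoszul}$ equipped with the square-zero differential $\partial_{\kappa}$; since $\Pop$ carries no internal differential, this is a quasi-free $\Pop$-module in the sense used earlier. The hypothesis that $\M$ is Koszul is precisely the statement that the Koszul projection $\pi\colon\mathcal{K}(\M)\to\M$ is a quasi-isomorphism onto $(\M,0)$. Moreover $\pi$ is surjective: its restriction to the lowest weight piece $(\M^{\cokoszul})^{(0)}\circ\Pop$ is (a desuspension of) the defining generating surjection $\V\circ\Pop\twoheadrightarrow\M$, so its image already exhausts $\M$. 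Thus $\pi$ is a surjective quasi-isomorphism from a free $\Pop$-module, which is exactly a free $\Pop$-resolution of $\M$.

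Second, I would verify minimality, i.e. that the differential is decomposable, $\partial_{\kappa}(\mathcal{K}(\M))\subset\M^{\cokoszul}\circ\overline{\Pop}$. This is immediate from the construction of $\partial_{\kappa}$: on the generating collection it is the composite $\M^{\cokoszul}\overset{\Delta'}{\to}\M^{\cokoszul}\circ' s\E\overset{s^{-1}}{\to}\M^{\cokoszul}\circ'\E\hookrightarrow\M^{\cokoszul}\circ\Pop$, whose image lies in $\M^{\cokoszul}\circ'\E$. Since $\E$ consists of generators and therefore sits inside the augmentation ideal $\overline{\Pop}$, the image of the generators lands in $\M^{\cokoszul}\circ\overline{\Pop}$, and this is exactly decomposability. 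Keeping track of the weight grading, $\partial_{\kappa}$ restricts to $(\M^{\cokoszul})^{(n)}\to(\M^{\cokoszul})^{(n-1)}\circ'\E$, so the homological degree drops by one while a single cogenerator is released into $\E\subset\overline{\Pop}$; in particular no component of $\partial_{\kappa}$ can land back in $\M^{\cokoszul}\circ\mathbb{1}=\M^{\cokoszul}$.

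Third, having produced a minimal free resolution, I would invoke the uniqueness of minimal resolutions recorded at the start of this section: the syzygies $\C_n$ of $\M$ are determined up to isomorphism and coincide with the generating graphical collections of any minimal resolution. For $\mathcal{K}(\M)$ these generators are precisely the weight pieces, giving $\C_n\cong(\M^{\cokoszul})^{(n)}$. Finally, unwinding the definition $\M^{\cokoszul}:=\Susp^{-1}(\M^!)^*$ componentwise in weight yields $(\M^{\cokoszul})^{(n)}\cong\Susp^{-1}((\M^!)^{(n)})^*$, which is the asserted formula.

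There is no serious obstacle here: the corollary is a formal consequence of Definition~\ref{defi::koszul} and the uniqueness of minimal resolutions. The only place warranting a moment's care is the minimality bookkeeping in the second step, namely confirming that the single application of the reduced coproduct $\Delta'$ strictly lowers the cogenerator weight and outputs into the augmentation ideal, so that the differential of the Koszul complex is genuinely decomposable rather than merely square-zero.
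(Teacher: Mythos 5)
Your proof is correct and follows exactly the route the paper intends: the paper states this corollary without proof as an immediate consequence of Definition~\ref{defi::koszul}, and your unwinding --- Koszulness supplying the quasi-isomorphism, surjectivity of $\pi$ from the generating surjection $\V\circ\Pop\twoheadrightarrow\M$ in lowest weight, decomposability of $\partial_{\kappa}$ because its image on cogenerators lies in $\M^{\cokoszul}\circ'\E\subset\M^{\cokoszul}\circ\overline{\Pop}$, and the uniqueness of syzygies from minimal resolutions together with $\M^{\cokoszul}=\Susp^{-1}(\M^!)^*$ taken weightwise --- is precisely the formal argument being invoked. The one point you rightly flag, that $\Delta'$ strictly lowers the cogenerator weight by one, is indeed the only bookkeeping that needs checking, and your treatment of it is accurate.
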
 

\begin{remark}
In the prospect of homotopical algebra, there is an alternative definition of Koszul contractads~\cite[Sec.~3]{lyskov2023contractads}, based on Bar-Cobar construction. Explicitly, the Koszul property tells us that there is a quasi-free resolution $\mathsf{\Omega}(\Pop^{\cokoszul})\overset{\simeq}{\to} \Pop$, where $\mathsf{\Omega}(\Pop^{\cokoszul})$ is a free contractad on the generators $\Pop^{\cokoszul}$ with differential coming from the cocontractad structure on $\Pop^{\cokoszul}$. In particular, this definition allows us to verify the Koszul property for contractads using the technique of Gr\"obner bases~\cite[Sec.~4]{lyskov2023contractads}.
\end{remark}
\subsection{Koszulity of Hamiltonian paths and cycles} Let us state and prove the main results of this paper
\begin{theorem}\label{thm::koszul_paths}
The contractad of Hamiltonian paths $\Ham$ is Koszul.
\end{theorem}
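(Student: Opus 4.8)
The plan is to deduce Koszulity from the Gröbner basis criterion recorded in the Remark above (see~\cite[Sec.~4]{lyskov2023contractads}): a quadratic contractad admitting a quadratic Gröbner basis of relations is Koszul. Since $\Ham$ is already quadratic by Theorem~\ref{thm::hampres}, it suffices to show that the defining relations~\eqref{eq::ham_assrel}--\eqref{eq::ham_monrel2} constitute a quadratic Gröbner basis with respect to a suitable admissible monomial order on the free contractad $\T(\nu,\nu^{(12)})$.

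First I would fix an admissible ordering on the tree-monomial basis of $\T(\nu,\nu^{(12)})$ adapted to the graph-indexing, choosing it so that reduction by the associativity-type relation~\eqref{eq::ham_assrel} rewrites every binary tree into the ladder form~\eqref{fig:prenorm}, and so that the leading terms of the monomial relations~\eqref{eq::ham_monrel1} and~\eqref{eq::ham_monrel2} are the displayed monomials themselves. All three leading terms are then quadratic, that is, supported on connected graphs with three vertices ($\Path_3$ and $\K_3$).

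Next I would identify the normal monomials, namely those tree-monomials divisible by none of these leading terms. Reduction by~\eqref{eq::ham_assrel} carries every monomial to a ladder, while the monomial relations~\eqref{eq::ham_monrel1},~\eqref{eq::ham_monrel2} discard precisely those ladders $(v_1,\dots,v_n)$ in which some consecutive pair $v_i,v_{i+1}$ is non-adjacent. Hence the normal monomials are exactly the ladders satisfying the adjacency condition, which, by the argument in the proof of Theorem~\ref{thm::hampres}, are in bijection with the directed Hamiltonian paths of $\Gr$. Since normal monomials always span the quotient and their number equals $\HP(\Gr)=\dim\Ham(\Gr)$, they are linearly independent and therefore form a basis; this forces the relations to be a Gröbner basis. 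As this basis is quadratic, Koszulity follows.

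The main obstacle I anticipate is verifying that the hand reduction used in Theorem~\ref{thm::hampres} genuinely coincides with normal-form reduction for an admissible order in the contractad setting: one must check that the chosen order is compatible with $\Aut(\Gr)$-equivariance and with contractad compositions. The dimension count above is designed to circumvent an explicit S-polynomial computation, but if a direct verification is preferred, confluence must instead be confirmed by reducing all order-two overlaps of the leading terms; these are supported on connected graphs with four vertices, and one checks case by case that each overlap reduces to zero. Finally, I would note that, since a contractad is Koszul exactly when its Koszul dual is, the same conclusion can be reached by proving the dual statement for $\Plan\cong\Ham^!$ (Proposition~\ref{prop::koszuldualpaths}), whose single relation~\eqref{eq::planeq_rel} makes the overlap analysis somewhat lighter.
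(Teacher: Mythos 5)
Your proposal is correct in substance, but it follows a genuinely different route from the paper. The paper proves Koszulity by a direct homological computation: it identifies a basis of the Koszul complex $\mathcal{K}_{\bullet}(\Ham)=\Ham^{\cokoszul}\circ\Ham$ with tuples partitioned into directed subpaths $(\Path^{(1)}|\cdots|\Path^{(k)})$, splits the complex as a direct sum over $\sigma\in\Plan(\Gr)$ of subcomplexes $\mathcal{K}_{\bullet}(\sigma)$, and identifies each summand with the augmented simplicial cochain complex of the full simplex on the set $\mathsf{E}(\sigma)$ of adjacent consecutive pairs; acyclicity of full simplices then finishes the proof. Your route instead runs through rewriting theory: with right combs chosen as leading terms, every admissible right comb is a leading term (of~\eqref{eq::ham_assrel} when both adjacencies hold, of~\eqref{eq::ham_monrel2} otherwise), and~\eqref{eq::ham_monrel1} kills ladders with a broken chain, so the normal monomials are exactly the ladders~\eqref{fig:prenorm} with all consecutive adjacencies; these biject with directed Hamiltonian paths, and since Theorem~\ref{thm::hampres} gives $\dim\Ham(\Gr)=\HP(\Gr)$, spanning plus the count forces the relations to be a quadratic Gr\"obner basis, whence Koszulity by the criterion of~\cite[Sec.~4]{lyskov2023contractads}. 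The trade-off: your argument is shorter and produces explicit normal forms, but it outsources the foundations (admissible orders on graph-admissible planar trees, and the theorem that a quadratic Gr\"obner basis implies Koszulity for contractads) to the earlier paper, whereas the paper's computation is self-contained and, crucially, the same simplicial technique extends verbatim to the right module $\CycHam$ (Theorem~\ref{thm::koszul_cycles}), where simplicial spheres replace simplices; no Gr\"obner-basis theory for right modules over contractads is available, so your method would not cover that companion result.

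Two smaller points. First, your worry about $\Aut(\Gr)$-equivariance of the order is misplaced: in shuffle-type Gr\"obner theory the monomial order is deliberately \emph{not} equivariant --- one forgets the symmetric structure, which is harmless since Koszulity is a statement about homology of the underlying complexes. Second, your closing suggestion that working with $\Plan\cong\Ham^{!}$ would be lighter is doubtful: with only the relation~\eqref{eq::planeq_rel}, right combs with a broken adjacency are no longer leading terms, so normal monomials are no longer just ladders, the clean dimension count disappears, and one would be forced into a genuine confluence analysis of overlaps over four-vertex graphs, with graph-dependent adjacency conditions at every step.
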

\begin{proof}
Let us first describe the Koszul dual cocontractad $\Ham^{\cokoszul}$. Thanks to Proposition~\ref{prop::koszuldualpaths} and the definition of Koszul dual cocontractads, we have $\Ham^{\cokoszul}\cong \Susp^{-1}\Plan^*$. For an $n$-tuple $\sigma=(v_1,v_2,\cdots,v_n)\in \Plan(\Gr)$, we let $\sigma^{\vee}:=\eta_{v_1,\cdots,v_n}\otimes \sigma^*\in \Susp^{-1}\Plan^*(\Gr)$, where $\eta_{v_1,\cdots,v_n}=(s\mapsto s_{v_1}\otimes\cdots\otimes s_{v_n})$. Consider the morphism $\Delta'\colon \Ham^{\cokoszul}\to \Ham^{\cokoszul}\circ' s\E$, needed for the construction of Koszul differential. Since $\Plan$ is binary,  $\Delta'$ is given by the collection of cocontractad maps $\Delta^{\Gr}_e\colon \Ham^{\cokoszul}(\Gr)\to \Ham^{\cokoszul}(\Gr/e)\otimes \Ham^{\cokoszul}(\Gr|_e)$.  For an edge $e=(v,w)\in E_{\Gr}$, we have
\begin{equation}\label{eq::coprod_with_signs}
\Delta^{\Gr}_{e}(v_1,v_2,\cdots,v_n)^{\vee}=\begin{cases}
(-1)^{i-1}(v_1,\cdots,\{v_{i},v_{i+1}\},\cdots,v_n)^{\vee}\otimes (v_i,v_{i+1})^{\vee}\text{, if } e=(v_i,v_{i+1});
\\
0,\textit{otherwise.}
\end{cases}    
\end{equation} The sign $(-1)^{i-1}$ comes from the cocontractad maps in $\Susp^{-1}$:
\begin{equation}\label{eq::signs_susp}
 \Delta^{\Gr}_{\{v_i,v_{i+1}\}}(\eta_{v_1,\cdots,v_i,v_{i+1},\cdots,v_n})=(-1)^{i-1}\eta_{v_1,\cdots,\{v_i,v_{i+1}\},\cdots,v_n}\otimes\eta_{v_i,v_{i+1}}.   
\end{equation}

Now consider the Koszul complex $\mathcal{K}_{\bullet}(\Ham)=(\Susp^{-1}\Plan^*\circ \Ham,d_{\kappa})$ for the contractad $\Ham$. For a graph $\Gr$, the vector space $(\Susp^{-1}\Plan^*\circ\Ham)(\Gr)$ has a basis consisting of elements
\[((\{G_1\},\{G_2\},\cdots,\{G_k\})^{\vee};\Path^{(1)},\Path^{(2)},\cdots,\Path^{(k)}),\] where $(\{G_1\},\{G_2\},\cdots,\{G_k\})\in \Plan(\Gr/I)$ and $\Path^{(i)}$ are Hamiltonian paths in induced subgraphs $\Gr|_{G_i}$. On the one hand, each Hamiltonian path in a graph determines a tuple that is realised by a graph-admissible planar binary tree, so each basis element determines a realised tuple with additional partition into subpaths
\[((\{G_1\},\{G_2\},\cdots,\{G_k\})^{\vee};\Path^{(1)},\Path^{(2)},\cdots,\Path^{(k)})\mapsto (\Path^{(1)}|\Path^{(2)}|\cdots|\Path^{(k)}).\] On the other hand, thanks to Lemma~\ref{lemma:edges_in_tuples}, for each element $\sigma=(v_1,\cdots,v_n)\in \Plan(\Gr)$ and the partition into subpaths $(v_1,\cdots,v_{i_1}|v_{i_1+1},\cdots,v_{i_2}|\cdots|v_{i_{k-1}+1},\cdots,v_n)$, the contracted tuple $(\{v_1,\cdots,v_{i_1}\},\{v_{i_1+1},\cdots,v_{i_2}\},\cdots,\{v_{i_{k-1}+1},\cdots,v_n\})$ is also realised by a $\Gr/I$-admissible tree, where $I$ is the corresponding partition. So, for each component $\Gr$, the vector space $\mathcal{K}_{\bullet}(\Ham)(\Gr)$ is generated by multipartite tuples $(\Path^{(1)}|\cdots|\Path^{(k)})$ consisting of directed subpaths $\Path^{(i)}\subset \Gr$, with $\sigma=\Path^{(1)}\cdot\Path^{(2)}\cdots \Path^{(k)}\in \Plan(\Gr)$. Also, the homological degree of $(\Path^{(1)}|\cdots|\Path^{(k)})$ is $k-1$.

In these notations, thanks to equation~\eqref{eq::coprod_with_signs}, the Koszul differential $\partial_{\kappa}$ acts by the formula
\[
\partial_{\kappa}((\Path^{(1)}|\cdots|\Path^{(k)}))=\sum_{l} (-1)^{l-1} (\Path^{(1)}|\cdots|\Path^{(l)}\cdot \Path^{(l+1)}|\cdots|\Path^{(k)}),
\] where the sum is taken over all $l$ with $\Path_l\cdot \Path_{l+1}$ forms a subpath in $\Gr$. From this description, we see that the complex $\mathcal{K}_{\bullet}(\Ham)(\Gr)$ splits into a direct sum of dg complexes
\begin{equation}\label{eq::splits_paths}
\mathcal{K}_{\bullet}(\Ham)(\Gr)=\bigoplus_{\sigma\in \Plan(\Gr)} \mathcal{K}_{\bullet}(\sigma),   
\end{equation} where $\mathcal{K}_{\bullet}(\sigma)$ is a subcomplex generated by tuples $(\Path^{(1)}|\cdots|\Path^{(k)})$ with $\Path^{(1)}\cdot\Path^{(2)}\cdots \Path^{(k)}=\sigma$.

For an element $\sigma=(v_1,v_2,\cdots,v_n)\in \Plan(\Gr)$, let $\mathsf{E}(\sigma)$, be a set of indices $1\leq i\leq n-1$ such that vertices $v_i,v_{i+1}$ are adjacent. For each ordered partition $I$ of $\sigma$ into subpaths $(v_1,\cdots,v_{i_1}|v_{i_1+1},\cdots,v_{i_2}|\cdots|v_{i_k+1},\cdots,v_{n})$ one can associate the subset $S(I)$ of $\mathsf{E}(\sigma)$ by the rule
\[
S(I)=([1,i_1]\setminus \{i_1\})\cup ([i_1+1,i_2]\setminus \{i_2\})\cup\cdots\cup ([i_{k}+1,n]\setminus \{n\}).
\] Note that this map defines a one-to-one correspondence between partitions of $\sigma$ into subpaths and subsets (including empty) of $\mathsf{E}(\sigma)$. Moreover, thanks to the description of $\partial_{\kappa}$, this correspondence defines an isomorphism of dg complexes
\begin{equation}\label{eq::isomorphism_full_complex}
\mathcal{K}_{\bullet}(\sigma)\cong \Cyc^{(n-1)-\bullet}_+[\Delta_{\mathsf{E}(\sigma)}], 
\end{equation} where $\Delta_{\mathsf{E}(\sigma)}$ is the full simplicial complex on the vertex set $\mathsf{E}(\sigma)$ and $\Cyc^{\bullet}_+[\Delta_{\mathsf{E}(\sigma)}]$ is dual augmeneted dg simplicial complex. Recall that, for a set $A$, the reduced cohomology groups of a full simplex $\Delta_{A}$ are zero unless $A$ is empty. Since $\sigma\in \Plan(\Gr)$, we have $\mathsf{E}(\sigma)$ is non-empty unless $\Gr=\Path_1$ and $\sigma=(1)$. Hence, thanks to splitting~\eqref{eq::splits_paths} and isomorphism~\eqref{eq::isomorphism_full_complex}, the homology of $\mathcal{K}_{\bullet}(\Ham)(\Gr)$ are trivial unless $\Gr=\Path_1$. So, we deduce that the Koszul complex defines a minimal resolution of the trivial $\Ham$-module
\[
\Ham^{\cokoszul}\circ \Ham \overset{\simeq}{\to} \mathbb{1}_{\Ham}.
\]
\end{proof}
By similar methods, we prove the following result
\begin{theorem}\label{thm::koszul_cycles}
The right $\Ham$-module of Hamiltonian cycles $\CycHam$ is Koszul.
\end{theorem}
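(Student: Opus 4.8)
The plan is to follow the proof of Theorem~\ref{thm::koszul_paths} almost verbatim, replacing linear tuples by cyclic ones and keeping careful track of the extra ``wrap-around'' position; this single change is what makes the homology compute $\CycHam$ rather than a trivial module. First I would describe the Koszul dual comodule. By Proposition~\ref{prop::koszuldualcycles} and the definition of $\cokoszul$ we have $\CycHam^{\cokoszul}\cong\Susp^{-1}\CycEq^{*}$, so a basis is given by duals $\tau^{\vee}$ of cyclic tuples $\tau=[v_1,\dots,v_n]\in\CycEq(\Gr)$. The infinitesimal coproduct $\Delta'\colon\CycHam^{\cokoszul}\to\CycHam^{\cokoszul}\circ's\E$ now runs over all $n$ \emph{cyclic} edge positions (indices modulo $n$, so the pair $(v_n,v_1)$ is included): for each $e=(v_i,v_{i+1})\in E_{\Gr}$ it splits off $(v_i,v_{i+1})^{\vee}$ and contracts $\tau$ at that position, with the sign from $\Susp^{-1}$ exactly as in~\eqref{eq::coprod_with_signs} and~\eqref{eq::signs_susp}.

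Next I would identify $\mathcal{K}_{\bullet}(\CycHam)(\Gr)=(\Susp^{-1}\CycEq^{*}\circ\Ham)(\Gr)$ combinatorially, as in the path case. Using Lemma~\ref{lemma:edges_in_cyctuples}, a basis consists of \emph{cyclic multipartite tuples} $[\Path^{(1)}|\cdots|\Path^{(k)}]$: a cyclic arrangement of directed subpaths whose cyclic concatenation $\tau$ lies in $\CycEq(\Gr)$; such an element sits in homological degree $k-1$, and $\partial_{\kappa}$ merges cyclically adjacent subpaths $\Path^{(l)}\cdot\Path^{(l+1)}$ (indices modulo $k$, so the merge of $\Path^{(k)}$ with $\Path^{(1)}$ is allowed) whenever they form a subpath of $\Gr$. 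Hence the complex splits as $\mathcal{K}_{\bullet}(\CycHam)(\Gr)=\bigoplus_{\tau\in\CycEq(\Gr)}\mathcal{K}_{\bullet}(\tau)$, where $\mathcal{K}_{\bullet}(\tau)$ is spanned by the cyclic refinements of $\tau$.

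Now fix $\tau=[v_1,\dots,v_n]$ and let $\mathsf{E}(\tau)\subseteq\mathbb{Z}_n$ be the set of cyclic positions $i$ with $(v_i,v_{i+1})\in E_{\Gr}$. A cyclic refinement chooses a set of cuts containing every non-edge position, the complementary ``joins'' forming a subset of $\mathsf{E}(\tau)$; since a cyclic arrangement of linear subpaths needs at least one cut, the admissible join-sets are \emph{all} subsets of $\mathsf{E}(\tau)$ when $\mathsf{E}(\tau)\neq\mathbb{Z}_n$, but only the \emph{proper} subsets when $\mathsf{E}(\tau)=\mathbb{Z}_n$. As in~\eqref{eq::isomorphism_full_complex}, $\partial_{\kappa}$ is the simplicial coboundary in the join variable, so I obtain $\mathcal{K}_{\bullet}(\tau)\cong\Cyc^{(n-1)-\bullet}_+[\Delta_{\mathsf{E}(\tau)}]$ when $\tau$ is not a Hamiltonian cycle, and $\mathcal{K}_{\bullet}(\tau)\cong\Cyc^{(n-1)-\bullet}_+[\partial\Delta_{\mathbb{Z}_n}]$ when $\tau$ is a Hamiltonian cycle, where $\partial\Delta_{\mathbb{Z}_n}\cong S^{n-2}$ is the boundary of the full simplex.

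Finally I would read off the homology. For $n\geq2$ and any $\tau\in\CycEq(\Gr)$ the set $\mathsf{E}(\tau)$ is nonempty, since the innermost cherry of a realizing planar tree gives an adjacent consecutive pair. Thus if $\tau$ is not a Hamiltonian cycle, $\mathcal{K}_{\bullet}(\tau)$ is the dual of a contractible full simplex and is acyclic; if $\tau$ is a Hamiltonian cycle, the reduced cohomology $\tilde{H}^{\,\ast}(S^{n-2})$ is one-dimensional and, by the degree count above, concentrated in homological degree $0$ (the top-dimensional cocycles of $\partial\Delta_{\mathbb{Z}_n}$ sit in degree $0$ under the $(n-1)-\bullet$ shift). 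Since Hamiltonian cycles are precisely the $\tau\in\CycEq(\Gr)$ with $\mathsf{E}(\tau)=\mathbb{Z}_n$ and they form a basis of $\CycHam(\Gr)$, this yields $H_0\cong\CycHam(\Gr)$ and $H_{>0}=0$ (with $\Gr=\Path_1$ giving the trivial one-term complex), so the Koszul projection $\mathcal{K}_{\bullet}(\CycHam)\to\CycHam$ is a quasi-isomorphism. The main obstacle is exactly the cyclic bookkeeping of the third paragraph: one must admit the wrap-around gap as a legitimate merge and check that \emph{forbidding} the all-joins refinement turns the relevant complex from a contractible simplex into the sphere $S^{n-2}$. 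It is this one change that makes $H_0$ equal $\CycHam$ instead of vanishing, and it recovers relation~\eqref{eq::cychampres} as the statement that the $n$ openings of a Hamiltonian cycle into a path represent a single homology class.
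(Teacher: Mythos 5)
Your proposal is correct and follows the paper's own proof essentially step for step: the same identification $\CycHam^{\cokoszul}\cong\Susp^{-1}\CycEq^{*}$, the same splitting of the Koszul complex over cyclic tuples $\tau\in\CycEq(\Gr)$, and the same identification of each summand with the augmented simplicial cochain complex of $\Delta_{\mathsf{E}(\tau)}$ (contractible) or of the sphere $\partial\Delta_{\mathbb{Z}_n}$ (one class in degree zero) according to whether $\tau$ is a Hamiltonian cycle. Your explicit remark that forbidding the all-joins refinement is exactly what turns the simplex into the sphere is precisely the paper's bijection between cyclic partitions and subsets of $\mathsf{E}(\tau)$ excluding the full set $\mathbb{Z}_n$.
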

\begin{proof}
Consider the Koszul dual right $\Ham^{\cokoszul}$-comodule $\CycHam^{\cokoszul}$. Thanks to Proposition~\ref{prop::koszuldualcycles}, we have $\CycHam^{\cokoszul}\cong \Susp^{-1}\CycEq^*$. For a cyclic tuple $\tau=[v_1,v_2,\cdots,v_n]\in \CycEq(\Gr)$, we let $\tau^{\vee}:=\eta_{v_1,\cdots,v_n}\otimes \tau^*\in \Susp^{-1}\CycEq^*(\Gr)$. In particular, by Koszul sign rules, we have $[v_{1+i},v_{2+i},\cdots,v_{n+i}]^{\vee}=(-1)^{(n-1)i}[v_1,\cdots,v_n]^{\vee}$. Similarly to formula~\eqref{eq::coprod_with_signs}, the cocontractad maps $\Delta^{\Gr}_{e}\colon \CycHam^{\cokoszul}(\Gr)\to \CycHam^{\cokoszul}(\Gr/e)\otimes \Ham^{\cokoszul}(\Gr|_e)$ are given by the formula
\begin{equation}\label{eq::coprod_with_signs_cyc}
\Delta^{\Gr}_{e}[v_1,v_2,\cdots,v_n]^{\vee}=\begin{cases}
(-1)^{i-1}[v_1,\cdots,\{v_{i},v_{i+1}\},\cdots,v_n]^{\vee}\otimes (v_i,v_{i+1})^{\vee}\text{, if } e=(v_i,v_{i+1});
\\
(-1)^{n-1}[v_2,\cdots,\{v_n,v_1\}]^{\vee}\otimes (v_n,v_{1})^{\vee}\text{, if } e=(v_n,v_{1});
\\
0,\textit{otherwise.}
\end{cases}    
\end{equation} Next, for a cycle $n$-tuple $\tau$ we define its cycle partition as the choice of its representative $\sigma=(v_1,\cdots,v_{n})$, $[\sigma]=\tau$, and its ordered partition module the cycle action $(\Path^{(1)}|\Path^{(2)}|\cdots|\Path^{(k)})=(\Path^{(1+i)}|\Path^{(2+i)}|\cdots|\Path^{(k+i)})$. We shall use the notation $|\Path^{(1)}|\Path^{(2)}|\cdots|\Path^{(k)}|$ for a cycle partition of $\tau$. In particular, each representative of $\tau$ defines a unique cycle partition. Similarly to the proof of Theorem~\ref{thm::koszul_paths}, we see that the monomial basis of component $(\Susp^{-1}\CycEq^*\circ\Ham)(\Gr)$ can be identified with cycle partitions $|\Path^{(1)}|\cdots|\Path^{(k)}|$ of cycle tuples $\tau\in\CycEq(\Gr)$ realised by binary trees into subpaths $\Path^{(i)}\subset \Gr$. The degree of partition  $|\Path^{(1)}|\cdots|\Path^{(k)}|$ is $(k-1)$ and we have sign rule $|\Path^{(1)}|\cdots|\Path^{(k)}|=(-1)^{(k-1)i}|\Path^{(1+i)}|\cdots|\Path^{(k+i)}|$.

Similarly to decomposition~\eqref{eq::splits_paths}, for each graph $\Gr$, we have the splitting of the Koszul complex
\begin{equation}\label{eq::splits_cycles}
\mathcal{K}(\CycHam)(\Gr)=(\Susp^{-1}\CycEq^*\circ\Ham)(\Gr)\cong \bigoplus_{\tau\in \CycEq(\Gr)} \mathcal{CK}(\tau),    
\end{equation} where $\mathcal{CK}(\tau)$ is the dg complex generated by cycle partitions $|\Path^{(1)}|\Path^{(2)}|\cdots|\Path^{(k)}|$ of $\tau$ into subpaths with the differential
\[
\partial_{\kappa}(|\Path^{(1)}|\cdots|\Path^{(k)}|)=\sum_{l} (-1)^{l-1} |\Path^{(1)}|\cdots|\Path^{(l)}\cdot \Path^{(l+1)}|\cdots|\Path^{(k)}|,
\] where the sum is taken over all $l\in \mathbb{Z}_n$ with $\Path_l\cdot \Path_{l+1}$ forms a subpath in $\Gr$.

For an element $\tau=[v_1,v_2,\cdots,v_n]\in \Plan(\Gr)$, let $\mathsf{E}(\tau)$, be a set of indices $ i\in \mathbb{Z}_n$ such that vertices $v_i,v_{i+1}$ are adjacent. Consider the cyclic partition of $\tau$ and its ordered representative $I=(v_{1+j},\cdots,v_{i_1}|\cdots|v_{i_{k}+1},\cdots,v_{n+j})$. Recall from the proof of Theorem~\ref{thm::koszul_paths} that we can associate a unique subset $S(I)\subset \mathbb{Z}_n\setminus \{n+j\}$. We see that this subset does no depend on the choice of representative of cyclic partition, so there is a well-defined map from cyclic partitions to proper subsets of $\mathbb{Z}_n$. Although, this map defines a bijection between cyclic partitions of $\tau$ into subpaths and subsets of $\mathsf{E}(\tau)\setminus \{\mathbb{Z}_n\}$. Moreover, from the description of Koszul differential $\partial_{\kappa}$, we see an isomorphism of dg complexes
\begin{equation}
\mathcal{CK}_{\bullet}(\tau)\cong \Cyc^{(n-1)-\bullet}_+[\partial\Delta_{\mathbb{Z}_n}\cap \Delta_{\mathsf{E}(\tau)}],
\end{equation} where $\partial\Delta_{\mathbb{Z}_n}:=\Delta_{\mathbb{Z}_n}\setminus \{\mathbb{Z}_n\}$ is the simplicial sphere. Recall that the reduced cohomology of a sphere are one-dimensional and concentrated in the top degree. When $\tau=[v_1,\cdots,v_n]$ determines a Hamiltonian cycle, we have $\mathsf{E}(\tau)=\mathbb{Z}_n$, and hence the homology of $\mathcal{CK}(\tau)$ are one-dimensional and concentrated in degree zero. Otherwise, $\mathsf{E}(\tau)$ is a proper subset of $\mathbb{Z}_n$, so $\partial\Delta_{\mathbb{Z}_n}\cap \Delta_{\mathsf{E}(\tau)}=\Delta_{\mathsf{E}(\tau)}$, hence complex $\mathcal{CK}(\tau)$ is contractible. All in all, the homology of $\mathcal{K}(\CycHam)(\Gr)$ are concentrated in degree zero and the dimension of the zero homology group is equal to the number of Hamiltonian cycles. So, we deduce that the Koszul complex defines a minimal resolution
\[
\CycHam^{\cokoszul}\circ \Ham \overset{\simeq}{\to} \CycHam.
\] 
\end{proof} 
\section{Generating functions for contractads}\label{sec::applications}
In this section, we obtain several numerical applications to Hamiltonian paths and cycles on graphs from the Koszul property for $\Ham$ and $\CycHam$. At the end, we compute generating series for certain patterns avoiding permutations.
\subsection{Graphic functions} 
We briefly recall the notion of graphic functions, for details see~\cite[Sec.~2]{khoroshkin2024hilbert}.
A \textit{graphic function} is a function on the groupoid of connected graphs $f\colon \mathsf{CGr}\to \mathsf{k}$ with values in a commutative ring $\mathsf{k}$  which is constant on isomorphism classes. We have a natural product of graphic functions, denoted $*$,  arising from the contraction product
\[
(\phi * \psi)(\Gr):= \sum_{I \vdash \Gr} \phi(\Gr/I)\prod_{G\in I}\psi(\Gr|_G), 
\] This operation is associative, linear on the left part, and the graphic function $\varepsilon(\Gr)=\delta_{\Gr, \Path_1}$ is a unit for this operation. For a graded graphical collection $\Orb$, we define its Euler graphic function $\chi(\Orb)$ by the rule:
\[
\chi(\Orb)(\Gr):=\sum_i (-1)^i \dim \Orb_i(\Gr).
\] In particular, when $\Orb_{\bullet}$ is a dg graphical collection, the Euler graphic function is a homological invariant, $\chi(\Orb)=\chi(H_{\bullet}(\Orb))$. By the construction of $*$-product, Euler graphic functions preserve the contracted product
\[
\chi(\Pop\circ\Orb)=\chi(\Pop)*\chi(\Orb).
\] Also, we consider the involution $\omega$ of the space of graphic functions given by the rule
\[
\omega(f)(\Gr):=(-1)^{|V_{\Gr}|-1}f(\Gr).
\] In particular, we have $\chi(\Susp^{-1}\Pop)=\omega(\chi(\Pop))$, where $\Susp^{-1}$ is a desuspension.
\begin{prop}\label{prop::grfun_koszul}
If $\Pop$ is a Koszul contractad, then we have
\begin{equation}
\omega(\chi(\Pop^!))*\chi(\Pop)=\omega(\chi(\Pop))*\chi(\Pop^!)=\varepsilon.   
\end{equation} If $\M$ is a Koszul right $\Pop$-module, we have
\begin{equation}
 \chi(\M)=\omega(\chi(\M^!))*\chi(\Pop).   
\end{equation}
\end{prop}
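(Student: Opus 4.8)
The plan is to read off both identities from the fact that the relevant Koszul complexes are acyclic resolutions, combined with the multiplicativity of the Euler graphic function under the contraction product. The whole argument is formal once the two structural inputs — homological invariance of $\chi$ and the identification of $\chi(\Pop^{\cokoszul})$ — are in place.

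For part (i), I would start from the Koszulity hypothesis, which by Definition~\ref{defi::koszul} says that the Koszul complex $\mathcal{K}_{\bullet}(\mathbb{1}_{\Pop})=(\Pop^{\cokoszul}\circ\Pop,\partial_{\kappa})$ is a resolution of the trivial module, i.e.\ $H_{\bullet}(\Pop^{\cokoszul}\circ\Pop)\cong\mathbb{1}_{\Pop}$. Since $\chi$ is a homological invariant of dg graphical collections, $\chi(\Pop^{\cokoszul}\circ\Pop)=\chi(\mathbb{1}_{\Pop})=\varepsilon$, the last equality holding because $\mathbb{1}_{\Pop}$ is one-dimensional on $\Path_1$ and zero elsewhere. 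On the other hand, multiplicativity gives $\chi(\Pop^{\cokoszul}\circ\Pop)=\chi(\Pop^{\cokoszul})*\chi(\Pop)$, so it only remains to compute $\chi(\Pop^{\cokoszul})$. Using $\Pop^{\cokoszul}=\Susp^{-1}(\Pop^!)^*$ together with the identity $\chi(\Susp^{-1}\Orb)=\omega(\chi(\Orb))$ and the fact that linear dualization preserves componentwise dimensions (hence $\chi$), I obtain $\chi(\Pop^{\cokoszul})=\omega(\chi(\Pop^!))$. Equating the two computations of $\chi(\Pop^{\cokoszul}\circ\Pop)$ yields exactly $\omega(\chi(\Pop^!))*\chi(\Pop)=\varepsilon$. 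The second equality in (i) then follows by symmetry: since $\Pop$ is Koszul if and only if $\Pop^!$ is, and $(\Pop^!)^!\cong\Pop$, applying the identity just proved to $\Pop^!$ in place of $\Pop$ gives $\omega(\chi(\Pop))*\chi(\Pop^!)=\varepsilon$.

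Part (ii) is entirely parallel. Koszulity of $\M$ means $\mathcal{K}_{\bullet}(\M)=(\M^{\cokoszul}\circ\Pop,\partial_{\kappa})$ is a resolution of $\M$, so homological invariance and multiplicativity give $\chi(\M)=\chi(\M^{\cokoszul}\circ\Pop)=\chi(\M^{\cokoszul})*\chi(\Pop)$; the same desuspension--dualization bookkeeping applied to $\M^{\cokoszul}=\Susp^{-1}(\M^!)^*$ yields $\chi(\M^{\cokoszul})=\omega(\chi(\M^!))$, and therefore $\chi(\M)=\omega(\chi(\M^!))*\chi(\Pop)$.

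The genuine points to verify are bookkeeping rather than any deep step. First, I must confirm that $\chi$ is a homological invariant and multiplicative on dg graphical collections with finite-dimensional components; this relies on all relevant spaces being finite-dimensional in each component, which holds because for a fixed finite graph $\Gr$ the partition set $\parti(\Gr)$ is finite and every factor $\Pop^{\cokoszul}(\Gr/I)$, $\Pop(\Gr|_G)$ is finite-dimensional. Second, I must check that the homological shift introduced by $\Susp^{-1}$ combines correctly with linear dualization, so that $\chi(\Susp^{-1}(\Pop^!)^*)=\omega(\chi(\Pop^!))$; here one uses that passing to the linear dual preserves dimensions and that the sign $(-1)^i$ in the definition of $\chi$ is insensitive to the flip of homological degree induced by dualization. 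Tracking these signs is the one place where I expect to spend most of the effort, but it presents no real obstruction.
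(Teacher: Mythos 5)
Your proposal is correct and follows essentially the same route as the paper: read off $\chi(\Pop^{\cokoszul})*\chi(\Pop)=\chi(\mathbb{1})=\varepsilon$ from the quasi-isomorphism $\Pop^{\cokoszul}\circ\Pop\overset{\simeq}{\to}\mathbb{1}_{\Pop}$ via homological invariance and multiplicativity of $\chi$, identify $\chi(\Pop^{\cokoszul})=\omega(\chi(\Pop^!))$ through the desuspension--dualization relation, and treat the module case in parallel using $\M^{\cokoszul}\circ\Pop\overset{\simeq}{\to}\M$. The only difference is that you spell out the second equality $\omega(\chi(\Pop))*\chi(\Pop^!)=\varepsilon$ by invoking Koszulity of $\Pop^!$ and $(\Pop^!)^!\cong\Pop$, a step the paper leaves implicit.
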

\begin{proof}
By the definition of a Koszul contractad, we have a quasi-isomorphism
\[
\Pop^{\cokoszul}\circ \Pop\overset{\simeq}{\rightarrow}\mathbb{1},
\] so it implies the equality of Euler characteristics \[\chi(\Pop^{\cokoszul})*\chi( \Pop)=\chi(\Pop^{\cokoszul}\circ \Pop)=\chi(\mathbb{1})=\varepsilon.\] Since of effect of the desuspension $\Pop^!=\Susp^{-1}(\Pop^{\cokoszul})^*$, we have $\chi(\Pop^{\cokoszul})=\omega(\chi(\Pop^!))$. In a similar way, the formula $\chi(\M)=\omega(\chi(\M^!))*\chi(\Pop)$ follows from the quasi-isomorphism
\[
\M^{\cokoszul}\circ\Pop\overset{\simeq}{\rightarrow} \M.
\]
\end{proof}
For simplicity of notations, we let
\begin{gather}
  \HP:=\chi(\Ham),\quad \overline{\HP}:=\chi(\overline{\Ham}),\quad \PE:=\chi(\Plan), \quad \mathrm{P}:=\chi(\Perm)
  \\
  \HC:=\chi(\CycHam),\quad \overline{\HC}:=\chi(\overline{\CycHam})\quad \CE:=\chi(\CycEq), \quad \mathrm{C}:=\chi(\mathsf{CycPerm})
\end{gather}
Explicitly, $\HP(\Gr)$ and $\HC(\Gr)$ are the number of directed Hamiltonian paths and cycles in $\Gr$; $\overline{\HP}(\Gr)=\HP(\overline{\Gr})$ and $\overline{\HC}(\Gr)=\HC(\overline{\Gr})$,\ are the number of directed Hamiltonian paths and cycles in complement graph\footnote{except the convention $\overline{\HC}(\Path_1)=0$}; $\PE(\Gr)$ and $\CE(\Gr)$ are the number of ordered $(v_1,v_2,\cdots,v_n)$ and cycle $[v_1,v_2,\cdots,v_n]$ tuples realised by $\Gr$-admissible planar binary trees; $\mathrm{P}(\Gr)=|V_{\Gr}|!$, $\mathrm{C}(\Gr)=(|V_{\Gr}|-1)!$ are the number of ordered and cyclic tuples.

Thanks to Theorem~\ref{thm::perm_plan} and Theorem~\ref{thm::cycperm_plan}, we have
\begin{equation}\label{eq::p_pe}
\mathrm{P}=\overline{\HP}*\PE, \quad \mathrm{C}=\CE+\overline{\HC}*\PE.
\end{equation}

Thanks to Theorem~\ref{thm::koszul_paths}, Theorem~\ref{thm::koszul_cycles} and Proposition~\ref{prop::grfun_koszul}, we have
\begin{gather}
\omega(\PE)*\HP=\omega(\HP)*\PE=\epsilon,\label{eq::hp_inverse}
\\
\HC=\omega(\CE)*\HP.\label{eq::hc_inverse}
\end{gather}
Combining equations above with equation~\eqref{eq::p_pe}, we conclude the following identities. 
\begin{theorem}\label{thm::reccurences_paths_and_cycles}
We have
\[
\omega(\mathrm{P})*\HP=\omega(\overline{\HP}),\quad  \omega(\mathrm{C})*\HP=\HC+\omega(\overline{\HC}).
\]  In other words, for a connected graph $\Gr$, we have
\begin{equation}
   \sum_{I\vdash \Gr}(-1)^{|I|-1}|I|!\prod_{G\in I}\HP(\Gr|_G)=(-1)^{|V_{\Gr}|-1}\HP(\overline{\Gr}),
\end{equation} and if graph has at least one edge, we have
\begin{equation}
   \sum_{I\vdash \Gr} (-1)^{|I|-1}(|I|-1)!\prod_{G\in I} \HP(\Gr|_G)=\HC(\Gr)+(-1)^{|V_{\Gr}|-1}\HC(\overline{\Gr}).  
\end{equation}
\end{theorem}
\subsection{Hamiltonian paths and cycles in Complete Multipartite graphs.}
Unfortunately, for an arbitrary graph, the formulas from Theorem~\ref{thm::reccurences_paths_and_cycles} do not provide efficient ways for computing Hamiltonian paths/cycles. But if we restrict ourselves to the family of complete multipartite graphs, we can package numbers of Hamiltonian paths/cycles for all such graphs in one generating function as follows. 

Recall from page~\pageref{typesofgraphs} that, for a partition $\lambda=(\lambda_1\geq \lambda_2\geq\cdots\geq\lambda_k)$, the complete multipartite graph $\K_{\lambda}$ is the graph consisting of blocks of vertices of sizes $\lambda_1,\lambda_2,\cdots,\lambda_k$, such that two vertices are adjacent if and only if they belong to different blocks. Consider the ring of symmetric functions $\Lambda_{\mathbb{Q}}=\underset{\to}{\lim}\mathbb{Q}[x_1,\cdots,x_n]^{\Sigma_n}$ with rational coefficients. Recall that this ring has the basis consisting of monomial symmetric functions $m_{\lambda}=\mathsf{Sym}(x^{\lambda})$ indexed by Young diagrams. For a Young diagram $\lambda$, let $|\lambda|$ be a weight, $l(\lambda)$ a length, and $\lambda!=\lambda_1!\cdot\lambda_2!\cdots\cdot\lambda_k!$.

For a graphic function $f$, we define its Young generating function, by the rule
\begin{gather*}
    F_{\mathsf{Y}}^{(0)}(f)=\sum_{l(\lambda)\geq 2} f(\K_{\lambda})\frac{m_{\lambda}}{\lambda!}
    \\
    F^{(n)}_{\mathsf{Y}}(f)=\sum_{|\lambda|\geq 0} f(\K_{(1^n)\cup\lambda})\frac{m_{\lambda}}{\lambda!}\text{, for }n\geq 1
    \\
    F_{\mathsf{Y}}(f)(z)=\sum_{n\geq 0} F_{\mathsf{Y}}^{(n)}(f)\frac{z^n}{n!}=\sum_{l(\lambda)\geq 2} f(\K_{\lambda})\frac{m_{\lambda}}{\lambda!}+\sum_{n\geq 1,|\lambda|\geq 0} f(\K_{(1^n)\cup\lambda})\frac{m_{\lambda}}{\lambda!}\frac{z^n}{n!}.
\end{gather*}
\begin{example}
Let us compute the Young generating function for $\mathrm{P}=\chi(\Perm)$, $\mathrm{P}(\K_{\lambda})=|\lambda|!$. Let $p_n=m_{(n)}=\sum_{i\geq 1} x_i^n$ be the $n$-th power symmetric function. The zero term $F_{\mathsf{Y}}^{(0)}(\PE)$ has the form
\begin{gather*}
F_{\mathsf{Y}}^{(0)}(\mathrm{P})=\sum_{l(\lambda)\geq 2} |\lambda|!\frac{m_{\lambda}}{\lambda!}=\sum_{|\lambda|\geq 0} |\lambda|!\frac{m_{\lambda}}{\lambda!}-(1+\sum_{n\geq 1}\frac{p_n}{n!})=\sum_{n\geq 0} p_1^n-(1+\sum_{n\geq 1}p_n),
\end{gather*}
where the second identity follows from the formula $p_1^n=\sum_{\lambda\vdash n}\frac{n!}{\lambda!}m_{\lambda}$. Similarly, for higher terms, we have
\[
F_{\mathsf{Y}}^{(n)}(\mathrm{P})=\sum_{|\lambda|\geq 0} (n+|\lambda|!)\frac{m_{\lambda}}{\lambda!}=\sum_{k\geq 0} \binom{n+k}{n}\sum_{|\lambda|=k} \frac{k!}{\lambda!}m_{\lambda}=\sum_{k\geq 0} \binom{n+k}{n}p_1^k.
\]
All in all, we get
\begin{multline}
  F_{\mathsf{Y}}(\mathrm{P})=\sum_{k,n\geq 0} \binom{n+k}{k}z^np_1^k -(1+\sum_{n\geq 1}p_n)=\sum_{k\geq 0} (z+p_1)^k -(1+\sum_{n\geq 1}p_n)=\\=\frac{1}{1-(z+p_1)}-(1+\sum_{n\geq 1}p_n).    
\end{multline}
By similar computations, for the graphic function of cyclic equivalent trees $\mathrm{C}=\chi(\CycPerm)$, using the formula $\mathrm{C}(\K_{\lambda})=(|\lambda|-1)!$, we deduce
\begin{equation}\label{eq::young_compatibility}
F_{\mathsf{Y}}(\mathrm{C})=-\log(1-(p_1+z))-\sum_{n\geq 1} \frac{p_n}{n}.
\end{equation}
\end{example}
It was shown in~\cite[Th.~2.2.11]{khoroshkin2024hilbert}, that Young generating functions are compatible with the product of graphic functions as follows 
\begin{equation}
F_{\mathsf{Y}}(f*g)=F_{\mathsf{Y}}(f)(F_{\mathsf{Y}}(g))   
\end{equation}
Also, for involution $\omega$, we have
\begin{equation}\label{eq::involution}
F_{\mathsf{Y}}(\omega(f))(\underline{x},z)=-F_{\mathsf{Y}}(f)(-\underline{x},-z).  
\end{equation}
In~\cite{klarner1969number}, Klarner computed the generating function that counts the numbers of Hamiltonian paths in complete multipartite graphs. Now, we reprove and generalize his result.
\begin{theorem}[Hamiltonian paths and cycles in complete multipartite graphs]\label{thm::hp_hc_young_generating}
\hfill\break
\begin{itemize}
    \item[(i)]
The generating function of Hamiltonian paths in complete multipartite graphs\footnote{we use convention $\HP(\K_{(0)})=1$ for the empty graph $\K_{(0)}=\varnothing$} is given by the formula
\begin{equation}\label{eq::ham_young_generating}
  1+p_1+F_{\mathsf{Y}}(\HP)=\sum_{n\geq0, |\lambda|\geq 0} \HP(\K_{(1^n)\cup\lambda})\frac{z^n}{n!}\frac{m_{\lambda}}{\lambda!}=\frac{1}{1-(z +\sum_{n\geq 1} (-1)^{n-1}p_n)}.  
\end{equation}
\item[(ii)] The generating function of Hamiltonian cycles in complete multipartite graphs is given by the formula
\begin{equation}\label{eq::hamcyc_young_generating}
F_{\mathsf{Y}}(\HC)=\sum_{n+l(\lambda)\geq 2} \HC(\K_{(1^n)\cup\lambda})\frac{z^n}{n!}\frac{m_{\lambda}}{\lambda!}=-\log(1-(z+\sum_{n\geq 1}(-1)^{n-1}p_n))+\sum_{n\geq 1}(-1)^n\frac{p_n}{n}.  
\end{equation}
\end{itemize}
\end{theorem}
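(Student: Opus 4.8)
The plan is to apply the ring homomorphism $F_{\mathsf{Y}}$ to the two identities of Theorem~\ref{thm::reccurences_paths_and_cycles}, converting the $*$-product into the functional composition of~\cite[Th.~2.2.11]{khoroshkin2024hilbert} and the involution $\omega$ into the sign-substitution~\eqref{eq::involution}. Since $F_{\mathsf{Y}}(\PE)$ and $F_{\mathsf{Y}}(\CE)$ have already been computed in closed form in the Example above, the relations $\omega(\PE)*\HP=\epsilon$ and $\HC=\omega(\CE)*\HP$ become explicit functional equations for the unknown series $F_{\mathsf{Y}}(\HP)$ and $F_{\mathsf{Y}}(\HC)$, which I then solve. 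Throughout I use that $F_{\mathsf{Y}}(\epsilon)=z$, since the unit $\epsilon$ is supported on $\Path_1$, and that the composition $F_{\mathsf{Y}}(f)(F_{\mathsf{Y}}(g))$ amounts to substituting the inner series in place of the grading variable $z$ while retaining the outer power sums.

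For part (i) I first compute $F_{\mathsf{Y}}(\omega(\PE))$ from $F_{\mathsf{Y}}(\PE)=\frac{1}{1-(z+p_1)}-(1+\sum_{n\geq1}p_n)$ using~\eqref{eq::involution}: substituting $\underline{x}\mapsto-\underline{x}$ (so $p_n\mapsto(-1)^np_n$) and $z\mapsto -z$ and negating the whole expression gives
\[
F_{\mathsf{Y}}(\omega(\PE))=1+\sum_{n\geq1}(-1)^np_n-\frac{1}{1+z+p_1}.
\]
Applying $F_{\mathsf{Y}}$ to $\omega(\PE)*\HP=\epsilon$ turns it into the statement that $F_{\mathsf{Y}}(\omega(\PE))$, evaluated at $z\mapsto H:=F_{\mathsf{Y}}(\HP)$, equals $z$. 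Solving the resulting scalar equation $1+\sum_{n\geq1}(-1)^np_n-\frac{1}{1+H+p_1}=z$ for the reciprocal yields
\[
1+p_1+H=\frac{1}{1-(z+\sum_{n\geq1}(-1)^{n-1}p_n)},
\]
which is the claimed formula once I verify that the correction $1+p_1$ reconstitutes exactly the terms excluded from $F_{\mathsf{Y}}(\HP)$: the empty-graph convention $\HP(\K_{(0)})=1$ supplies the constant $1$, and the single-block value $\HP(\K_{(1)})=\HP(\Path_1)=1$ supplies $p_1$, all larger single blocks being edgeless and hence contributing $0$.

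For part (ii) I apply $F_{\mathsf{Y}}$ to $\HC=\omega(\CE)*\HP$, obtaining $F_{\mathsf{Y}}(\HC)=F_{\mathsf{Y}}(\omega(\CE))$ evaluated at $z\mapsto H$. From $F_{\mathsf{Y}}(\CE)=-\log(1-(p_1+z))-\sum_{n\geq1}\frac{p_n}{n}$ and~\eqref{eq::involution} I compute $F_{\mathsf{Y}}(\omega(\CE))=\log(1+p_1+z)+\sum_{n\geq1}\frac{(-1)^np_n}{n}$. The substitution $z\mapsto H$ leaves the $z$-free tail untouched and replaces $\log(1+p_1+z)$ by $\log(1+p_1+H)$; plugging in the identification of $1+p_1+H$ from part (i) gives $\log(1+p_1+H)=-\log(1-(z+\sum_{n\geq1}(-1)^{n-1}p_n))$, so that adding back the tail produces precisely~\eqref{eq::hamcyc_young_generating}.

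The main obstacle is bookkeeping rather than any deep idea. One must pin down the exact meaning of the composition $F_{\mathsf{Y}}(f)(F_{\mathsf{Y}}(g))$ as substitution of the inner series for the grading variable $z$ (with the outer power sums preserved), track every sign produced by the involution~\eqref{eq::involution}, and match the normalization constants — the additive $1+p_1$ in (i) and the summation range $n+l(\lambda)\geq2$ in (ii) — against the empty- and minimal-block conventions $\HP(\K_{(0)})=1$ and $\HC(\K_{(1^2)})=1$. Once these conventions are fixed, both statements reduce to inverting a single rational expression and composing it with a logarithm.
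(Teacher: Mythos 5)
Your proposal is correct and takes essentially the same route as the paper's own proof: both apply $F_{\mathsf{Y}}$ to the identities of Theorem~\ref{thm::reccurences_paths_and_cycles}, compute $F_{\mathsf{Y}}(\omega(\PE))$ and $F_{\mathsf{Y}}(\omega(\CE))$ via the sign rule~\eqref{eq::involution}, and solve the resulting functional equations for $F_{\mathsf{Y}}(\HP)$ and then $F_{\mathsf{Y}}(\HC)$. Your explicit check that the correction $1+p_1$ accounts exactly for the conventions $\HP(\K_{(0)})=1$ and $\HP(\K_{(1)})=1$ is a welcome spelling-out of a detail the paper leaves implicit.
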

\begin{proof}
For a connected complete multipartite graph $\K_{\lambda}$, its complement graph $\overline{\K_{\lambda}}$ is a disjoint union of complete graphs $\overline{\K_{\lambda}}=\sqcup_i \K_{\lambda_i}$, so the complement graph of $\K_{\lambda}$ have no Hamiltonian paths/cycles. Thanks to Theorem~\ref{thm::reccurences_paths_and_cycles} and equation~\eqref{eq::young_compatibility}, we have
\begin{gather}
    \label{eq::func_equation} F_{\mathsf{Y}}(\omega(\mathrm{P}))(F_{\mathsf{Y}}(\HP)(z))=F_{\mathsf{Y}}(\omega(\mathrm{P})*\HP)(z)=F_{\mathsf{Y}}(\omega(\overline{\HP}))=z,
    \\ \label{eq::func_equation_cyc}
    F_{\mathsf{Y}}(\omega(\mathrm{C}))(F_{\mathsf{Y}}(\HP)(z))=F_{\mathsf{Y}}(\HC)+F_{\mathsf{Y}}(\omega(\overline{\HC}))=F_{\mathsf{Y}}(\HC).
\end{gather} Thanks to equation~\eqref{eq::involution}, we have
\begin{gather*}
F_{\mathsf{Y}}(\omega(\mathrm{P}))(\underline{x},z)=-F_{\mathsf{Y}}(\mathrm{P})(-\underline{x},-z)=1+\sum_{n\geq 1} (-1)^np_n -\frac{1}{1+z+p_1}, 
\\
F_{\mathsf{Y}}(\omega(\mathrm{C}))=\log(1+(p_1+z))+\sum_{n\geq 1}(-1)^n\frac{p_n}{n}.
\end{gather*} So, the formula~\eqref{eq::ham_young_generating} is exactly the solution of the functional equation~\eqref{eq::func_equation}. The formula~\eqref{eq::hamcyc_young_generating} follows from equation~\eqref{eq::func_equation_cyc},
\[
F_{\mathsf{Y}}(\HC)=\log(1+p_1+F_{\mathsf{Y}}(\HP)))+\sum_{n\geq 1} (-1)^n\frac{p_n}{n}.
\]
\end{proof}
The series expansion of the right hand side of~\eqref{eq::ham_young_generating} and~\eqref{eq::hamcyc_young_generating} in $p$-basis are given by
\begin{gather*}
 \frac{1}{1-z+\sum_{n\geq 1} (-1)^np_n}=\sum_{k\geq 0}k!z^k\sum_{n\geq 0} (-1)^n\sum_{\lambda\vdash n} (-1)^{l(\lambda)}\binom{k+l(\lambda)}{l(\lambda)}\binom{l(\lambda)}{m(\lambda)}p_{\lambda},
 \\
 -\log(1-(z+\sum_{n\geq 1}(-1)^{n-1}p_n))=\sum_{k\geq 0}k!z^k\sum_{n\geq 0} (-1)^n\sum_{\lambda\vdash n} (-1)^{l(\lambda)}\frac{1}{k+l(\lambda)}\binom{k+l(\lambda)}{l(\lambda)}\binom{l(\lambda)}{m(\lambda)}p_{\lambda}
\end{gather*} where $p_{\lambda}=p_{\lambda_1}\cdot p_{\lambda_2}\cdots p_{\lambda_k}$,  $\binom{l(\mu)}{m(\mu)}=\frac{l(\mu)!}{m_1(\mu)!\cdots m_n(\mu)!}$ and $m_i(\mu)=|\{j|\lambda_j=i\}|$- cycle types. Recall that the transition matrix from $p$-basis to $m$-basis is lower triangular
\begin{equation}
p_{\mu}=\sum_{\lambda \geq \mu} L_{\mu\lambda}m_{\lambda},   
\end{equation} where the sum ranges over all partitions $\lambda$ of the size $|\mu|$ that \textit{dominate} $\mu$: $\lambda_1\geq \mu_1, \lambda_1+\lambda_2\geq \mu_1+\mu_2$, and so on. Specifically, the coefficient $L_{\mu \lambda}$ is the number of $l(\lambda)\times l(\mu)$-matrices with entries in $\mathbb{N}$ whose column sum is $\mu$ and row sums is $\lambda$ and there is exactly one non-zero entry for each column.
\begin{sled}\label{thm::number_hp_lambda}
For a partition $\lambda$ and $k\geq 0$, we have 
\begin{equation}
\HP(\K_{(1^k)\cup \lambda})= k!\lambda!\sum_{\mu \leq \lambda} \varepsilon_{\mu}\binom{l(\mu)}{m(\mu)}\binom{l(\mu)+k}{l(\mu)}L_{\mu \lambda}, \text{ where } \varepsilon_{\mu}=(-1)^{|\mu|-l(\mu)}   
\end{equation} If $k+l(\lambda)\geq 2$, we have
\begin{equation}
\HC(\K_{(1^k)\cup \lambda})=k!\lambda!\sum_{\mu \leq \lambda} \varepsilon_{\mu}\frac{1}{k+l(\mu)}\binom{l(\mu)+k}{l(\mu)}\binom{l(\mu)}{m(\mu)}L_{\mu \lambda}.
\end{equation}
In particular, for $k=0$ and $l(\lambda)\geq 2$, we have
\[
\HP(\K_{\lambda})=\lambda!\sum_{\mu \leq \lambda} \varepsilon_{\mu}\binom{l(\mu)}{m(\mu)}L_{\mu \lambda}, \quad \HC(\K_{\lambda})=\lambda!\sum_{\mu \leq \lambda} \varepsilon_{\mu}\frac{1}{l(\mu)}\binom{l(\mu)}{m(\mu)}L_{\mu \lambda}.
\]
\end{sled}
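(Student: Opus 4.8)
The plan is to read off the coefficients of $z^k m_{\lambda}$ from the closed-form generating functions of Theorem~\ref{thm::hp_hc_young_generating} and to match them against the defining expansion $1+p_1+F_{\mathsf{Y}}(\HP)=\sum_{n,\lambda}\HP(\K_{(1^n)\cup\lambda})\frac{z^n}{n!}\frac{m_{\lambda}}{\lambda!}$, in which the coefficient of $z^k m_{\lambda}$ equals $\HP(\K_{(1^k)\cup\lambda})/(k!\lambda!)$ (and analogously for $\HC$). Writing $P=\sum_{n\ge1}(-1)^{n-1}p_n$, Theorem~\ref{thm::hp_hc_young_generating} gives $1+p_1+F_{\mathsf{Y}}(\HP)=(1-(z+P))^{-1}$ and $F_{\mathsf{Y}}(\HC)=-\log(1-(z+P))+\sum_{n\ge1}(-1)^n\frac{p_n}{n}$, so the whole task reduces to expanding a geometric series and a logarithm in the two ``variables'' $z$ and $P$ and then converting from the power-sum basis to the monomial basis.

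First I would expand $(1-(z+P))^{-1}=\sum_{j\ge0}(z+P)^j$ and, by the binomial theorem, isolate the coefficient of $z^k$ as $\sum_{i\ge0}\binom{k+i}{k}P^i$. The key combinatorial step is the multinomial expansion of $P^i$: a monomial $p_{\mu}$ occurs only when $i=l(\mu)$, with coefficient equal to the number $\binom{l(\mu)}{m(\mu)}$ of orderings of the parts of $\mu$ times the sign $\prod_j(-1)^{\mu_j-1}=(-1)^{|\mu|-l(\mu)}=\varepsilon_{\mu}$. Hence the coefficient of $z^k p_{\mu}$ in $(1-(z+P))^{-1}$ is $\varepsilon_{\mu}\binom{k+l(\mu)}{l(\mu)}\binom{l(\mu)}{m(\mu)}$, recovering the power-sum expansion recorded before the corollary. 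For the cycle case I would instead use $-\log(1-(z+P))=\sum_{j\ge1}\frac{(z+P)^j}{j}$; the same extraction now produces the extra factor $\frac1{k+l(\mu)}$ coming from $\frac1j\binom{j}{k}=\frac1{k+i}\binom{k+i}{k}$ with $j=k+i$.

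Next I would pass from the power-sum basis to the monomial basis via the triangular transition $p_{\mu}=\sum_{\lambda\ge\mu}L_{\mu\lambda}m_{\lambda}$ stated above the corollary. Substituting and collecting the coefficient of $m_{\lambda}$ converts each power-sum sum into a sum over $\mu\le\lambda$, giving the coefficient of $z^k m_{\lambda}$ as $\sum_{\mu\le\lambda}\varepsilon_{\mu}\binom{k+l(\mu)}{l(\mu)}\binom{l(\mu)}{m(\mu)}L_{\mu\lambda}$ (weighted by the additional $\frac1{k+l(\mu)}$ in the cycle case). Comparing with the defining expansion, where this same coefficient equals $\HP(\K_{(1^k)\cup\lambda})/(k!\lambda!)$ (resp. $\HC(\K_{(1^k)\cup\lambda})/(k!\lambda!)$), and clearing the denominator $k!\lambda!$ yields the two displayed formulas; the specialization $k=0$ is then immediate.

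The main obstacle is the combinatorial bookkeeping rather than any conceptual difficulty: one must carry the sign $\varepsilon_{\mu}$, the ordering multiplicity $\binom{l(\mu)}{m(\mu)}$, and the (possibly $\frac1{k+l(\mu)}$-weighted) binomial factor through both the power expansion and the change of basis without error. A secondary point requiring care is the role of the correction terms. On the $\HP$-side the summand $1+p_1$ merely reflects that $F_{\mathsf{Y}}^{(0)}$ omits the zero- and one-block partitions, and one checks that the extracted formula still returns the correct (and, on edgeless complete multipartite graphs, vanishing) values there. On the $\HC$-side the extra $\sum_{n\ge1}(-1)^n p_n/n=\sum_n(-1)^n m_{(n)}/n$ contributes only to the single-block coefficients $z^0 m_{(n)}$, i.e. to the cases $k+l(\lambda)=1$; this is exactly why the cycle identity is asserted only for $k+l(\lambda)\ge2$, and verifying that this correction lands precisely on the excluded low-order terms is the one check that is genuinely necessary.
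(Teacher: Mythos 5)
Your proposal is correct and is essentially the paper's own argument: the paper obtains the corollary in exactly this way, by expanding the closed forms of Theorem~\ref{thm::hp_hc_young_generating} in the power-sum basis (the two displayed $p$-basis expansions immediately preceding the corollary, with the sign $\varepsilon_{\mu}$, the multiplicity $\binom{l(\mu)}{m(\mu)}$, and the factor $\tfrac{1}{k+l(\mu)}$ in the logarithmic case) and then converting to the monomial basis via the triangular transition matrix $L_{\mu\lambda}$ and comparing coefficients of $z^k m_{\lambda}$ with the defining expansion. Your explicit bookkeeping of the correction terms $1+p_1$ and $\sum_{n\geq 1}(-1)^n p_n/n$, which the paper leaves implicit, is precisely the check needed to see why the $\HP$ formula holds for all $k\geq 0$ while the $\HC$ formula requires $k+l(\lambda)\geq 2$.
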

\subsection{Paths and cycles}\label{sec::separable_permutations}  In this Section we restrict ourselves to the case of Path and Cycle graphs. Surprisingly, we reprove and generalize classical results concerning patterns avoiding permutations.
\subsubsection{Hertzsprung's problem} We recall the Hertzsprung's problem~\cite{hertzsprung1887kombinationsopgave}: \textit{How many ways $n$ non-attacking kings can be placed on an $n\times n$ board, $1$ in each row and column?} Equivalently, find the number of permutations without rising or falling successions. Note that there is a one to one correspondence between such permutation and Hamiltonian paths in the complement to the path graph $\overline{\Path}_{n}$. We show how to resolve the Hertzsprung's problem using results of this section. 

For a graphic function $f$, we define its Path generating function by the rule
\begin{gather}
 F_{\Path}(f)(t)=\sum_{n\geq 1} f(\Path_n)t^n, 
\end{gather} In particular, we have
\[
\sum_{n\geq 1} H_n t^n=F_{\Path}(\overline{\HP}),
\] where $H_n=\HP(\overline{\Path}_n)$ numbers from Hertzsprung's problem. It was shown in~\cite[Pr.~2.2.2]{khoroshkin2024hilbert}, that Path generating functions are compatible with $*$-product as follows
\begin{gather}
 F_{\Path}(f*g)(t)=F_{\Path}(f)(F_{\Path}(g)(t)), 
\end{gather} 

Thanks to equation above and Theorem~\ref{thm::reccurences_paths_and_cycles}, we have
\begin{equation}\label{eq::functional_equation_complement_paths}
F_{\Path}(\overline{\HP})=F_{\Path}(\mathrm{P})(F_{\Path}(\omega(\HP))(t)).   
\end{equation} Since each path on at least two vertices has exactly two directed Hamiltonian paths, we have
\begin{equation}\label{eq::formula_HP_paths}
F_{\Path}(\omega(\HP))(t)=-F_{\Path}(\HP)(-t)=t-2t^2+2t^3-2t^4+\cdots=\frac{t-t^2}{1+t}.    
\end{equation} So, combining above equations, we recover the generating series~\cite[page~737]{flajolet2009analytic}.

\begin{prop} The generating series resolving Hertzsprung's problem is given by
\[
\sum_{n\geq 1} H_nt^n= \sum_{n\geq 1} n!(\frac{t-t^2}{1+t})^n=t+2t^4+14t^5+90t^6+646t^7+\cdots
\]
\end{prop}

In a similar way, we could consider the Cyclic version of Hertzsprung's problem: \textit{ How many Hamiltonian cycles are in the complete graph $\K_n$ that do not share edge with the cycle $[1,2,\cdots,n]$?} Equivalently, we need to find the number of Hamiltonian cycle in the complement graph to the cycle $\overline{\Cyc}_n$. Such numbers and their asymptotic are well-studied, for example~\cite{aspvall1980dinner}. 

Similarly to the Path case, we consider Cycle generating function by the rule
\begin{gather}\label{eq::cyclic_generating_function}
 F_{\Cyc}(f)(t)=\sum_{n\geq 1} f(\Cyc_n)\frac{t^n}{n},
\end{gather} which is compatible with $*$-product as follows
\begin{gather}
 F_{\Cyc}(f*g)(t)=F_{\Cyc}(f)(F_{\Path}(g)(t))-f(\Path_1)F_{\Path}(g)(t)+f(\Path_1)F_{\Cyc}(g)(t),  
\end{gather} see~\cite[Pr.~2.2.2]{khoroshkin2024hilbert} for details. In particular, by Theorem~\ref{thm::reccurences_paths_and_cycles}, we have
\begin{equation}\label{eq::functional_equation_complement_cycles}
F_{\Cyc}(\overline{\HC})(t)=F_{\Cyc}(\mathrm{C})(F_{\Path}(\omega(\HP))(t))-F_{\Path}(\omega(\HP))(t)+F_{\Cyc}(\omega(\HP))(t)-F_{\Cyc}(\omega(\HC))(t), 
\end{equation} from which we deduce

\begin{prop}
The generating series resolving Cyclic Hertzsprung's problem is given by
\begin{multline}
\sum_{n\geq 1} CH_n\frac{t^n}{n}=t+F_{\Cyc}(\overline{\HC})=\frac{3t^2}{2}+\sum_{n\geq 1} \frac{(n-1)!}{n}(\frac{t-t^2}{1+t})^n+\sum_{n\geq 3}(-1)^n\frac{2t^n}{n}=\\=t+2\frac{t^5}{5}+6\frac{t^6}{6}+46\frac{t^7}{7}+354\frac{t^8}{8}+\cdots 
\end{multline} The numbers $CH_n$ form the sequence $\mathrm{A078603}$ from the OEIS~\cite{oeis}.
\end{prop}
\subsubsection{Separable permutations} Recall that a permutation $\sigma=(s_1,s_2,\cdots,s_n)$ contains a permutation $\tau=(t_1,t_2,\cdots,t_k)$ as a pattern if there is subsequence $\sigma'=(s_{i_1},s_{i_2},\cdots,s_{i_k})$ with $s_{i_l}<s_{i_r}$ if and only if $t_l<t_r$. A permutation $\sigma$ is called \textit{separable} if it avoids patterns $(2413),(3142)$. According to~\cite{bose1998pattern}, a permutation is separable if it has a separating tree ( see Example~\ref{ex::separable_paths}). In~\cite{shapiro1991bootstrap}, Shapiro and Stephens showed that the numbers of separable permutations are equal to little Schr\"oder numbers.  Now we reprove and generalise their results using operadic methods.

Recall that a \textit{non-symmetric operad} is a graded vector space $\Orb=\bigoplus_{n\geq 1} \Orb(n)$ with a collection of infinitesimal compositions 
\[
\circ_i\colon \Orb(n)\otimes \Orb(m)\to \Orb(n+m-1),\quad i=1,2,\cdots,n
\] satisfying certain associative axioms, for details see~\cite[Sec.~5.8]{loday2012algebraic}. For a path $\Path_{n+m-1}$, its induced and contracted graphs are also paths $\Path_{n+m-1}|_{\{i,i+1,\cdots,i+m-1\}}\cong \Path_m$, $\Path_{n+m-1}/_{\{i,i+1,\cdots,i+m-1\}}\cong \Path_n$. It was shown, that for a contractad $\Orb$ the restriction to paths determines a non-symmetric operad $\Path_{*}(\Orb)$ with components $\Path_{*}(\Orb)(n):=\Orb(\Path_n)$ and infinitesimal compositions induced from contractad structure $\circ_i:=\circ^{\Path_{n+m-1}}_{\{i,i+1,\cdots,i+m-1\}}$. Thanks to Example~\ref{ex::separable_paths}, the restriction of the contractad $\Plan$ to paths determines a non-symmetric operad $\Path_{*}(\Plan)$ of permutations realised by separating trees. Thanks to Theorem~\ref{thm::planpres}, this ns operad is generated by two binary generators $\mu^+$, $\mu^-$, satisfying the relations
\[
\mu^+\circ_1 \mu^{+}= \mu^+\circ_2 \mu^+, \quad \mu^-\circ_1 \mu^-= \mu^-\circ_2 \mu^-.
\] In particular, this ns operad is isomorphic to the free product $\mathsf{As}\star\mathsf{As}$ of two copies of associative ns operads.

Consider the set $\B_{\Path}=\{(2413),(3142)\}\subset \Sigma_4$ and let $\mathsf{Av}_n(\B_{\Path})$ be a set of separable $n$-permutations. 
\begin{theorem}\label{thm::separable_paths}
A permutation $\sigma$ is separable if and only if it is realised by a separating tree
\[
\mathsf{Av}_n(\B_{\Path})=\Plan(\Path_n).
\] Moreover, the generating function of separable permutations is given by the formula
\begin{equation}\label{eq::separable_shroder}
\sum_{n\geq 1} |\mathsf{Av}_n(\B_{\Path})|t^n=\frac{1-t-\sqrt{t^2-6t+1}}{2}=t+2t^2+6t^3+22t^4+90t^5+394t^6+\cdots.
\end{equation}
\end{theorem}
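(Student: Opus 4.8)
The plan is to establish the two assertions in turn: the set identity $\mathsf{Av}_n(\B_{\Path})=\Plan(\Path_n)$, and then the enumeration, which I read off from the operad isomorphism $\Path_{*}(\Plan)\cong\mathsf{As}\star\mathsf{As}$ recorded above. Throughout I use the tuple model of Example~\ref{ex::separable_paths}: an element of $\Plan(\Path_n)$ is the leaf word of a separating tree, and since the generators $\mu^{+},\mu^{-}$ record whether the block of values carried by the left subtree lies below or above that of the right subtree, a separating tree is precisely an expression of $\sigma$ built from singletons by direct sums $\oplus$ and skew sums $\ominus$.

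For the inclusion $\Plan(\Path_n)\subseteq\mathsf{Av}_n(\B_{\Path})$ I argue by induction on $n$. Both $2413$ and $3142$ are themselves neither $\oplus$- nor $\ominus$-decomposable, so in a permutation $\alpha\oplus\beta$ or $\alpha\ominus\beta$ any occurrence of one of these patterns must have all four of its positions inside $\alpha$ or all four inside $\beta$: a straddling occurrence would exhibit the pattern itself as a $\oplus$- or $\ominus$-sum, which is impossible. The inductive hypothesis applied to the two subtrees then finishes the inclusion.

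The reverse inclusion is the crux and the main obstacle. I must show that every $\sigma$ of length $n\ge2$ avoiding $2413$ and $3142$ admits a top-level $\oplus$- or $\ominus$-decomposition; iterating and recording the choice at each node then yields the separating tree. Here I would invoke the substitution decomposition $\sigma=\pi[\alpha_1,\dots,\alpha_m]$ with $\pi$ simple of length $\ge2$: if $\pi\in\{12,21\}$ the top node gives the wanted split, while if $\pi$ has length $\ge4$ then choosing one point from each block $\alpha_i$ realises $\pi$ as a pattern of $\sigma$, and by the Schmerl--Trotter theorem every simple permutation of length $\ge4$ contains one of the two simple permutations of length $4$, namely $2413$ or $3142$, contradicting avoidance. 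Thus $\pi\in\{12,21\}$ and $\sigma$ decomposes. A self-contained alternative runs a direct induction: once $\sigma$ is assumed both-indecomposable one checks $\sigma(1),\sigma(n)\notin\{1,n\}$, locates the values $1$ and $n$ in the interior, and extracts a forbidden pattern by hand.

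For the enumeration I pass to the non-symmetric operad $\Path_{*}(\Plan)\cong\mathsf{As}\star\mathsf{As}$, so that $\sum_{n\ge1}|\mathsf{Av}_n(\B_{\Path})|\,t^n=f(t)$ with $f(t)=\sum_{n\ge1}\dim(\mathsf{As}\star\mathsf{As})(n)\,t^n$. Using the two associativity relations I put each operation into normal form according to whether its outermost composition is $\mu^{+}$ or $\mu^{-}$: letting $c(t)$ generate the operations that are not $\mu^{+}$-reducible (a leaf, or a $\mu^{-}$-reducible operation), symmetry gives the same series for the $\mu^{-}$-side, and counting left-combed products yields $c=t+\tfrac{c^2}{1-c}$ and $f=t+2\tfrac{c^2}{1-c}=2c-t$. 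Eliminating $c$ gives $2c^2-(1+t)c+t=0$, hence $f^2+(t-1)f+t=0$; selecting the branch with $f(0)=0$ produces $f=\tfrac{1-t-\sqrt{t^2-6t+1}}{2}$, which is exactly~\eqref{eq::separable_shroder}. Equivalently, this is the free-product rule that compositional inverses of generating series add up to a correction term, applied to $f_{\mathsf{As}}=\tfrac{t}{1-t}$. Together with the first part this simultaneously reproves the Bose--Lubiw--Buss characterization and the Shapiro--Stephens evaluation by little Schr\"oder numbers.
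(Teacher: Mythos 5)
Your proposal is correct, but both halves of it take routes genuinely different from the paper's. For the identity $\mathsf{Av}_n(\B_{\Path})=\Plan(\Path_n)$, the paper works inside the ns operad $\Sigma$ of all permutations: it notes that $\Path_{*}(\Plan)$ and $\mathsf{Av}(\B_{\Path})$ are both suboperads generated by $\Sigma_2$ and concludes equality, with the hard content compressed into the phrase \enquote{with the same generators}. That compressed claim is exactly what you prove: your easy inclusion (via $\oplus$/$\ominus$-indecomposability of $2413$ and $3142$) is the closure of $\mathsf{Av}(\B_{\Path})$ under substitution, and your hard inclusion (substitution decomposition into a simple permutation plus Schmerl--Trotter, forcing the simple skeleton to be $12$ or $21$) is the statement that $\mathsf{Av}(\B_{\Path})$ is generated by $\Sigma_2$; so your write-up supplies the detail the paper leaves to \enquote{direct verification}, at the cost of importing Schmerl--Trotter (your self-contained alternative is only sketched). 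For the enumeration, the paper never counts normal forms in $\mathsf{As}\star\mathsf{As}$: it instead uses Koszulness of $\Ham$ via Theorem~\ref{thm::reccurences_paths_and_cycles} to get the functional equation $F_{\Path}(\omega(\HP))(F_{\Path}(\PE)(t))=t$, computes $F_{\Path}(\omega(\HP))(t)=\tfrac{t-t^2}{1+t}$ from the fact that each path has exactly two directed Hamiltonian paths, and takes the compositional inverse. Your normal-form system $c=t+\tfrac{c^2}{1-c}$, $f=2c-t$ yields the same quadratic $f^2+(t-1)f+t=0$, and your parenthetical remark on free products is precisely the bridge between the two computations: $f_{\mathsf{As}}^{\langle-1\rangle}(t)=\tfrac{t}{1+t}$, so $2\tfrac{t}{1+t}-t=\tfrac{t-t^2}{1+t}$ recovers the paper's series to be inverted. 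What each approach buys: the paper's route makes the Schr\"oder numbers an instance of its general Koszul-duality machinery (Theorem~\ref{thm::koszul_paths}) and ties them to the Hamiltonian path count; yours is independent of the Koszul theory, needing only the presentation $\Path_{*}(\Plan)\cong\mathsf{As}\star\mathsf{As}$ and elementary counting, but it leans on an external combinatorial theorem for the crucial inclusion.
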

\begin{proof}
Consider the ns operad of permutations $\Sigma$ with components $\Sigma(n):=\Sigma_n$ and operadic structure given by substitutions. Note that the restriction ns operad $\Path_{*}(\Plan)$ forms a ns suboperad of $\Sigma$. By direct verification, we see that the collection of separable permutations $\mathsf{Av}(\B_{\Path})=\{\mathsf{Av}_n(\B_{\Path})\}_{n\geq 1}$ also forms a suboperad. Since both $\Path_{*}(\Plan)$ and $\mathsf{Av}(\B_{\Path})$ are suboperads of $\Sigma$ with the same generators, we conclude the coincidence $\Path_*(\Plan)=\mathsf{Av}(\B_{\Path})$.

Let us deduce the generating series for separable permutations. By previous statement, we have
\[
\sum_{n\geq 1} |\mathsf{Av}_n(\B_{\Path})|t^n=F_{\Path}(\PE).
\] Thanks to equation above and equation~\eqref{eq::hp_inverse}, we have
\begin{equation}\label{eq::functional_equation_paths}
 F_{\Path}(\omega(\HP))(F_{\Path}(\PE)(t))=t.   
\end{equation} So, the generating series~\eqref{eq::separable_shroder} follows from the formula~\eqref{eq::formula_HP_paths} and functional equation~\eqref{eq::functional_equation_paths}.
\end{proof}

Thanks to Lemma~\ref{lemma:edges_in_tuples}, we obtain the following recursive linear time algorithm of determining whether a permutation is separable. The algorithm is: for a permutation $\sigma=(s_1,s_2,\cdots,s_n)$\begin{itemize}
    \item[(i)] Find first index $i$ such that $|s_i-s_{i+1}|=1$. If there is no such $i$, then $s$ is non-separable 
    \item[(ii)] Consider a new permutation $\sigma'=(s'_1,s'_2,\cdots,s'_{n-1})$ by replacing $s_i,s_{i+1}$ with $\min(s_i,s_{i+1})$ and renormalising remaining $s_j$.
    \item[(iii)] A permutation $\sigma$ is separable if and only if $\sigma'$ is separable. So, continue the same procedure for $\sigma'$.
\end{itemize}
The results of this subsection suggests us that,  for particular series of graphs, we can describe the corresponding components of $\Plan$ in terms of patterns avoiding permutations. 

\begin{example} Consider the series of cycles $\Cyc=\{\Cyc_n\}$. For $n\leq 4$, we have $\Plan(\Cyc_n)=\Sigma_n$. For $n=5$, the inclusion is proper and the complement $\B_{\Cyc}:=\Sigma_5\setminus \Plan(\Cyc_5)$ consists of $10$ elements of the form 
\[
\B_{\Cyc}=\{(i_1,i_2,\cdots,i_5)| i_k-i_{k+1} = \pm 2 \mod 5\}.
\] We state that, permutations realised by $\Cyc_n$-admissible binary planar trees are exactly $\B_{\Cyc}$-avoiding permutations, $\Plan(\Cyc_n)=\mathsf{Av}_n(\B_{\Cyc})$. The proof mimics the proof of Theorem~\ref{thm::separable_paths}, so we just give a sketch. The idea is that, for a contractad $\Orb$, the restriction to cycles $\Cyc_*(\Orb)(n):=\Orb(\Cyc_n)$ defines a right-module over ns operad $\Path_*(\Orb)$ with additional actions of cycle groups $\mathbb{Z}_n$ on components $\Cyc_*(\Orb)(n)$. Using the isomorphism $\Path_*(\Orb)\cong \mathsf{Av}_n(\B_{\Path})$, we see that both $\Cyc_*(\Plan)$ and $\mathsf{Av}(\B_{\Cyc})$ form cycle modules over ns operad $\mathsf{Av}(\B_{\Path})$. Similarly to the proof of Theorem~\ref{thm::separable_paths}, we can see that these modules coincide.

Next, similarly to the second part of the proof of Theorem~\ref{thm::separable_paths} and using logarithmic generating functions for cycles~\eqref{eq::cyclic_generating_function}, we deduce that
\[
F_{\Cyc}(\PE)=\sum_{n\geq 1} |\mathsf{Av}_n(\B_{\Cyc})|\frac{t^n}{n}=\frac{3t-t^2-t\sqrt{t^2-6t+1}}{2}.
\] By comparing with series~\eqref{eq::separable_shroder}, we see that, for $n\geq 2$, we have $|\mathsf{Av}_n(\B_{\Cyc})|=n|\mathsf{Av}_{n-1}(\B_{\Path})|$.
\end{example}
\begin{question*}
For which type of patterns $\B_{\mathsf{X}}$ we could find a series of graphs $\mathsf{X}=\{\mathsf{X}_n\}$, such that a permutation is realised by a $\mathsf{X}$-admissible binary planar tree if and only if it avoids $\B_{\mathsf{X}}$-patterns,  $\mathsf{Av}_n(\B_{\mathsf{X}})=\Plan(\mathsf{X}_n)$?
\end{question*}
\bibliographystyle{alpha}
\bibliography{biblio.bib}
\end{document}